\newcommand{\Kbar}{\overline{K}}
\newcommand{\PP}{\mathbb{P}}
\newcommand{\PKbar}{\mathbb{P}^1(\overline{K})}
\newcommand\sgn{\text{sgn}}
\newcommand\Z{\mathbb{Z}}
\newcommand\Q{\mathbb{Q}}
\newcommand\PGL{\text{PGL}}
\newtheorem{theorem}{Theorem}[section]
\newtheorem{lemma}[theorem]{Lemma}
\newtheorem{proposition}[theorem]{Proposition}
\newtheorem{proposition-definition}[theorem]{Proposition-Definition}
\newtheorem{corollary}[theorem]{Corollary}
\theoremstyle{definition}
\newtheorem{example}[theorem]{Example}
\newtheorem{definition}[theorem]{Definition}
\theoremstyle{remark}
\newtheorem*{remark}{Remark}
\numberwithin{equation}{section}
\begin{document}
\title{On the arithmetic of bicritical rational functions}
\author{Vefa Goksel} 
\author{Rafe Jones}

\begin{abstract}
Bicritical rational functions -- those with precisely two critical points -- include the well-studied families of unicritical polynomials and quadratic rational functions. In this article we lay out general foundations for studying arithmetic dynamical properties of bicritical rational functions, and prove new Galois-theoretic results for a family with special properties. We study the field of definition of the critical points, and give a normal form up to M\"obius conjugacy over this field. As a corollary, we show that after a finite extension of the ground field, the arboreal Galois representation attached to a bicritical rational function injects into an iterated wreath product of cyclic groups. We then examine the family of quadratic  $\phi \in \Q(x)$ with critical points $\gamma_1$ and $\gamma_2$ such that $\phi(\gamma_1) = \gamma_2$. Adapting methods of Odoni-Stoll in the polynomial case to rational functions, we show that the arboreal representation is surjective for an infinite subfamily.  
\end{abstract}

 \dedicatory{Dedicated to Nigel Boston, mentor and friend to us both} 
\date{\today}

\maketitle

\section{Introduction}

Let $K$ be a field of characteristic $\ell \geq 0$ with fixed algebraic closure $\overline{K}$, and let $\phi \in K(z)$ be a rational function of degree $d \geq 2$. We call $\phi$ \textit{bicritical} if it has precisely two critical points, i.e. there are distinct $\gamma_1, \gamma_2 \in \mathbb{P}^1(K)$ with ramification index $e_\phi(\gamma_i) > 1$, and $e_\phi(z) = 1$ for all other $z \in \mathbb{P}^1(K)$. We give full definitions in Section \ref{sec: background}. A bicritical rational function for which one of the critical points is also a fixed point is a \textit{unicritical polynomial}, since a linear fractional transformation (defined over $\overline{K}$) taking the fixed critical point to $\infty$ and the other critical point to $0$ conjugates the map to $z^d + c$ for some $c \in \overline{K}$. A rational function $\phi \in K(z)$ of degree 2 is bicritical provided that $\ell \neq 2$. 


From an arithmetic dynamical perspective, both unicritical polynomials and quadratic rational functions have received significant attention recently. For a small sample of recent work, see \cite{benedettounicrit}, \cite{BJKL}, \cite{tuckerunicrit}, \cite{habeggerunicrit}, \cite{han-tucker}, \cite{arbfew}. General bicritical rational maps over $\mathbb{C}$ have been studied in the context of complex dynamics, including by Milnor \cite{milnorbicrit} and others (e.g. \cite{epstein2012classification, KochBicriticalCommon, koch2023deckgroupsiteratesbicritical, pilgrimbicrit}).

In \cite{milnorbicrit}, Milnor showed that an arbitrary bicritical rational map in $\mathbb{C}(z)$  is conjugate over $\mathbb{C}$ to a map of the form $(c_1z^d + a)/(c_2z^d + b),$ obtained by conjugating the critical points to $0$ and $\infty$ via a linear fractional transformation. Our first main result gives a normal form of similar flavor, but suited to arithmetic applications. In particular, we give information on the field of definition of the conjugacy.

\begin{theorem} \label{intro: normalform}
Let $K$ be a field of characteristic $\ell \geq 0$, and let $\phi \in K(z)$ be bicritical of degree $d \geq 2$ with critical points $\gamma_1, \gamma_2 \in \PKbar$. Assume that $\ell = 0$ or $\ell > d$. If $\phi(\{\gamma_1, \gamma_2\}) \neq \{\gamma_1, \gamma_2\}$, then $\phi$ is conjugate over $K(\gamma_1, \gamma_2)$ to
\begin{equation} \label{intro: mainform} 
\frac{z^d + a}{z^d + b}
\end{equation}
for some $a, b \in K(\gamma_1, \gamma_2)$ with $a \neq b$.
\end{theorem}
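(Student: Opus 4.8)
The plan is to put $\phi$ into Milnor-style normal coordinates by conjugating the two critical points to $0$ and $\infty$, but with a third normalization chosen so that the conjugate lands \emph{exactly} in the family (\ref{intro: mainform}) rather than merely in Milnor's $(c_1z^d+a)/(c_2z^d+b)$.

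First I would record the ramification data. Since $\ell = 0$ or $\ell > d$, the map $\phi$ is separable and every ramification index is prime to $\ell$, so Riemann--Hurwitz holds with no wild correction and reads $\sum_P (e_\phi(P)-1) = 2d-2$. The only ramified points are $\gamma_1,\gamma_2$, each with $2 \le e_\phi(\gamma_i) \le d$, so both indices must equal $d$: the map is totally ramified over each critical value $\beta_i := \phi(\gamma_i)$, whence $\phi^{-1}(\beta_i) = \{\gamma_i\}$ as a set. In particular $\beta_1 \ne \beta_2$ (a common value would have $\gamma_1$ and $\gamma_2$ as preimages, each of multiplicity $d$, totaling $2d > d$), and more generally two distinct totally ramified points cannot share an image. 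I would then exploit the hypothesis: if both $\phi(\gamma_1),\phi(\gamma_2)$ lay in $\{\gamma_1,\gamma_2\}$ they would be distinct by the same counting, forcing $\phi(\{\gamma_1,\gamma_2\}) = \{\gamma_1,\gamma_2\}$, which is excluded. So, after possibly swapping $\gamma_1$ and $\gamma_2$ (which leaves $K(\gamma_1,\gamma_2)$ unchanged), we may assume $\beta_2 \notin \{\gamma_1,\gamma_2\}$, so that $\gamma_1,\gamma_2,\beta_2$ are three distinct points of $\mathbb{P}^1(K(\gamma_1,\gamma_2))$. Let $m \in \PGL_2(K(\gamma_1,\gamma_2))$ be the unique M\"obius transformation with $m(\gamma_1)=0$, $m(\gamma_2)=\infty$, $m(\beta_2)=1$, and put $\psi := m\circ\phi\circ m^{-1}$, a map defined over $K(\gamma_1,\gamma_2)$.

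It remains to pin down the shape of $\psi$. Conjugation by an (everywhere unramified) M\"obius map preserves critical points and ramification indices, so $\psi$ is totally ramified at $0$ and at $\infty$, with $\psi(\infty) = m(\beta_2) = 1$ and $\psi(0) = m(\beta_1) =: \alpha \ne 1$. Write $\psi = P/Q$ in lowest terms with $\max(\deg P, \deg Q) = d$ and split into two cases. If $\alpha \ne \infty$: the value $\psi(\infty)=1$ forces $\deg P = \deg Q = d$ with equal leading coefficients; total ramification at $0$ over $\alpha$ gives $P - \alpha Q = cz^d$ with $c \ne 0$ (as $z^d \mid P-\alpha Q$ and $\deg(P-\alpha Q)\le d$), and total ramification at $\infty$ over $1$ gives $P - Q = c'$ with $c' \ne 0$ (as $\deg(P-Q)$ must be $0$). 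Solving these two linear relations yields $\psi = (z^d - \alpha c'/c)/(z^d - c'/c)$, i.e.\ the form (\ref{intro: mainform}) with $a = -\alpha c'/c$, $b = -c'/c$ in $K(\gamma_1,\gamma_2)$ and $a \ne b$ because $a/b = \alpha \ne 1$. If $\alpha = \infty$: then $\psi$ has a pole of exact order $d$ at $0$, so (in lowest terms) $Q = cz^d$ with $c \ne 0$ and $P(0)\ne 0$; now $\psi(\infty)=1$ together with total ramification at $\infty$ forces $P = cz^d + e$ with $e \ne 0$, giving $\psi = (z^d + e/c)/z^d$, again of the form (\ref{intro: mainform}), with $b = 0 \ne a = e/c$.

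The step I expect to require the most care is the choice of the third normalization point. Sending only $\gamma_1 \mapsto 0$ and $\gamma_2 \mapsto \infty$ leaves a residual scaling ambiguity $z \mapsto \lambda z$, and over $K(\gamma_1,\gamma_2)$ one generally cannot extract the $d$-th root needed to absorb it into the coefficients; imposing $m(\beta_2) = 1$ — legitimate precisely because the hypothesis $\phi(\{\gamma_1,\gamma_2\}) \neq \{\gamma_1,\gamma_2\}$ produces a critical point whose image avoids both critical points — eliminates this ambiguity and is exactly what forces the numerator and denominator of $\psi$ to be monic of the same degree. Everything else is bookkeeping with orders of vanishing at $0$ and $\infty$.
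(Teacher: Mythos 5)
Your proposal is correct and takes essentially the same route as the paper's proof of Theorem \ref{normalform}(\ref{typical}): tame Riemann--Hurwitz forces $e_{\gamma_1}(\phi)=e_{\gamma_2}(\phi)=d$, one conjugates the critical points to $0$ and $\infty$ over $K(\gamma_1,\gamma_2)$, and the residual scaling ambiguity is eliminated by arranging that the conjugated map send $\infty$ to $1$ --- precisely the normalization the paper achieves with its final conjugation $z\mapsto z/c_3$ (after possibly swapping $0$ and $\infty$ via $z\mapsto 1/z$). Packaging all of this into one three-point M\"obius normalization $(\gamma_1,\gamma_2,\phi(\gamma_2))\mapsto(0,\infty,1)$ and reading off the shape from orders of vanishing is only a cosmetic reorganization of the same argument.
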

This normal form is close to unique: a map of the form in Eq. \ref{intro: mainform} is conjugate to at most one other such map, besides itself. See Theorem \ref{normalform} for a more detailed theorem statement, and for precise statements on uniqueness.

Under the hypotheses of Theorem \ref{intro: normalform}, we show further that $[K(\gamma_1, \gamma_2) : K] \leq 2$, with equality holding if and only if $\gamma_1$ and $\gamma_2$ are Galois-conjugate over $K$ (Theorem \ref{critfield}). 
Thus we can only have $[K(\gamma_1, \gamma_2) : K] = 2$ if the orbits of $\gamma_1$ and $\gamma_2$ under $\phi$ are indistinguishable in a combinatorial sense; see Corollary \ref{critspec}. 

The arboreal Galois representations associated to bicritical rational functions are also of interest. Let $\phi \in K(z)$ have degree $d \geq 2$, let $\alpha \in K$ and for $n \geq 1$ take $\phi^{-n}(\alpha) = \{\beta \in \overline{K} : \phi^n(\beta) = \alpha\}$. Assume that $\phi^{-n}(\alpha)$ has $d^n$ distinct elements, for each $n \geq 1$. Let $K_n(\alpha) = K(\phi^{-n}(\alpha))$, $K_\infty(\alpha) = \bigcup_{n \geq 1} K_n(\alpha)$, and let $G_n(\alpha)$ and $G_\infty(\alpha)$ be the Galois groups over $K$ of $K_n(\alpha)$ and $K_\infty(\alpha)$, respectively. If we take $T^d_\infty =  \bigsqcup_{n \geq 0} \phi^{-n}(\alpha)$ and assign edges according to the action of $\phi$, then $T^d_\infty$ has the structure of a complete infinite $d$-ary rooted tree with root $\alpha$ (we take $\phi^0$ to be the identity map), and $G_\infty(\alpha)$ embeds naturally into $\text{Aut}(T^d_\infty)$.

In Corollary \ref{wreath}, we show that if $\phi$ is a bicritical rational function satisfying the hypotheses of Theorem \ref{intro: normalform}, and $K(\gamma_1, \gamma_2) = K$, then $G_n(\alpha)/G_{n-1}(\alpha)$ embeds into $(C_d)^{d^{n-1}}$ for each $n \geq 2$, where $C_d$ denotes the cyclic group of order $d$. In particular, if $K$ contains $\gamma_1, \gamma_2,$ and a primitive $d$th root of unity, then we obtain an injection
\begin{equation} \label{intro: wreathprod eqn}
    G_\infty(\alpha) \hookrightarrow [C_d]^\infty,
\end{equation}
where $[C_d]^\infty$ denotes the infinite iterated wreath product of $C_d$, and can be identified with a subgroup of $\text{Aut}(T^d_\infty)$. Note that $[C_2]^\infty$ coincides with $\text{Aut}(T^2_\infty)$.



Our investigations in this article are motivated in part by the search for other families within the class of bicritical rational functions that might mimic the structure exhibited by unicritical polynomials.  One candidate 
is the family of maps with a \textit{trailing critical point}: $\phi^n(\gamma_1) = \phi^m(\gamma_2)$ for some $n > m \geq 0$. Note that this does not include the case of colliding critical points given by $\phi^n(\gamma_1) = \phi^n(\gamma_2)$, as studied in \cite{benedettocollision}. We remark that a bicritical rational function with a trailing critical point must have $K(\gamma_1, \gamma_2) = K$ by Corollary \ref{critspec}. 

Several authors have shown that the image of the map in Eq. \ref{intro: wreathprod eqn} has finite index in $[C_d]^\infty$ when $\phi$ is a unicritical polynomial satisfying various hypotheses (e.g.  \cite{tuckerunicrit}, \cite{han-tucker}, \cite{hindes2}, \cite{Li-stoll1}, \cite{Li-stoll2}, \cite{stoll}). In \cite[Theorem 5]{arbfew}, Juul et al. study a one-parameter family of quadratic maps with $\phi(\gamma_1) = \gamma_2$, and prove the map in Eq. \ref{intro: wreathprod eqn} is surjective for $\alpha = 0$. Using completely different methods, inspired by Stoll \cite{stoll} (who in turn drew on Odoni \cite[Section 4]{odoni}), we reach the same conclusion for a different family of quadratic maps with $\phi(\gamma_1) = \gamma_2$ and $\alpha = 0$. 


\begin{theorem} \label{intro: galois}
Let $m$ be a positive integer with $m \not\equiv 1 \pmod{4}$, and let 
$$\phi(z) = (z^2 + a)/z^2 \quad \text{with 
$a = -2(2m^2-1)^2.$}
$$ 
Then $[K_n(0) : K_{n-1}(0)] = 2^{2^{n-1}}$ for all $n \geq 1$, implying that $G_\infty(0) \cong \text{Aut}(T^2_\infty)$.
\end{theorem}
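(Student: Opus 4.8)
The plan is to follow the Odoni–Stoll strategy adapted to this rational setting. The key object is the sequence of "critical orbit" polynomials: since $\phi(z) = (z^2+a)/z^2$ has critical points $0$ and $\infty$ with $\phi(\infty) = 1$ and $\phi(0) = \infty$, and since we iterate preimages of $0$, the relevant discriminant information at level $n$ is governed by $\phi^n(\infty)$ (equivalently $\phi^{n-1}(1)$), the point $\phi$ sends the "trailing" critical value through. Writing $\phi^{-n}(0)$ as the roots of a polynomial $f_n(z) \in K[z]$ of degree $2^n$, one checks the recursion $f_n(z) = f_{n-1}(z)^2 + a\,(\text{denominator terms})$; more precisely, clearing denominators, $\phi^{-1}$ of a point $t$ is given by $z^2(\,\cdot\,) = $ something, so that $f_n$ is obtained from $f_{n-1}$ by the substitution $z \mapsto \phi(z)$ and clearing. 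The maximality statement $[K_n(0):K_{n-1}(0)] = 2^{2^{n-1}}$ is equivalent to: for each $n$, the extension $K_n(0)/K_{n-1}(0)$ is as large as possible, which (since $G_n(0)/G_{n-1}(0) \hookrightarrow (C_2)^{2^{n-1}}$ by Corollary \ref{wreath}, noting $K(\gamma_1,\gamma_2)=K$ here) amounts to showing a certain collection of $2^{n-1}$ square classes in $K_{n-1}(0)^*/(K_{n-1}(0)^*)^2$ is linearly independent over $\mathbb{F}_2$.

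**Reducing to a discriminant/square-class computation.** Each of the $2^{n-1}$ quadratic subextensions of $K_n(0)/K_{n-1}(0)$ corresponds to adjoining $\sqrt{\delta_\beta}$ where $\beta$ ranges over $\phi^{-(n-1)}(0)$ and $\delta_\beta$ is (up to squares in $K_{n-1}(0)$) the value $\phi^{n-1}(\infty) - \phi(\beta)$ or something of that shape — concretely, the condition that $z^2 = $ (a linear function of the level-$(n-1)$ root) be irreducible. The standard Odoni–Stoll move is: (i) the product of all these $\delta_\beta$ over the Galois orbit is, up to squares, the discriminant of $f_{n-1}$ (or a closely related resultant), which by the recursion is expressible in terms of $\phi$-iterates of the critical value; and (ii) to get the full $\mathbb{F}_2$-independence one shows these classes remain independent after passing to $K_{n-1}(0)$, which reduces — via a ramification or reduction-mod-$p$ argument — to an independence statement already over $K$ or over $\mathbb{Q}$. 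The role of the specific value $a = -2(2m^2-1)^2$ is to make a key quantity in this chain a non-square (and to control the $2$-adic valuation): one expects that $\phi^{n}(\infty)$, or the relevant resultant $\mathrm{Res}(f_{n-1}, f_{n-1}')$-type quantity, has the form $-2 \cdot (\text{square}) \cdot (\text{unit})$, so that the "new" square class at each level is essentially $\sqrt{-2\phi^{k}(\infty)}$ or similar and these are visibly independent. The congruence condition $m \not\equiv 1 \pmod 4$ should be exactly what guarantees $\phi^k(\infty) \neq 0,1$ avoids the bad cases and that a certain integer is $\equiv$ the right thing mod $8$ to be a non-square in $\mathbb{Q}_2$.

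**The induction and the main obstacle.** I would set up an induction on $n$: assume $[K_{n-1}(0):K_{n-2}(0)]$ is maximal and that a suitable auxiliary "non-square" hypothesis $H(n-1)$ holds (e.g., that $c_n := \pm\phi^{n-1}(\infty)$, or the associated adjusted discriminant, is not a square in $K_{n-1}(0)$, perhaps with a valuation normalization). The inductive step has two parts: first, $H(n-1) \Rightarrow [K_n(0):K_{n-1}(0)]$ maximal — this is where one invokes that the $2^{n-1}$ square classes $\delta_\beta$ are independent, which follows because their "partial products over Galois suborbits" are controlled by $H(n-1)$ applied in the subfields, together with the fact (from Corollary \ref{wreath}) that we already have an embedding into $(C_2)^{2^{n-1}}$ so we only need to rule out proper subgroups, and a proper subgroup would force some product of $\delta_\beta$'s to be a square, contradicting $H(n-1)$. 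Second, $H(n-1) \Rightarrow H(n)$ — this is the genuinely delicate "stability" step, typically handled by a $2$-adic (or mod-$p$) local argument showing that the recursively defined quantity $c_{n}$ stays a non-square; here the explicit form $a = -2(2m^2-1)^2$ is engineered so that a clean $2$-adic valuation count goes through (the factor $2m^2-1$ is odd, so $a$ has $2$-adic valuation exactly $1$, and $-2$ times a square is a non-square in $\mathbb{Q}_2^*$).

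**Where the difficulty concentrates.** I expect the main obstacle to be the stability step $H(n-1)\Rightarrow H(n)$: verifying that the recursively generated discriminant-type element never becomes a square in the growing tower. In the polynomial case Stoll exploits a magic-number argument (choosing the parameter so that iterates stay in a controlled congruence class); the rational-function case requires redoing this with $\phi(z)=(z^2+a)/z^2$, whose second-iterate and general iterates have denominators, so one must track both numerator and denominator $2$-adically and ensure no cancellation spoils the valuation parity. A secondary technical point is making the dictionary between "quadratic subextensions of $K_n/K_{n-1}$" and "square classes $\delta_\beta$" fully precise — i.e., identifying the correct linear-in-$\beta$ expression whose square-ness governs each splitting — and checking the degree-$2^n$ polynomial $f_n$ is separable with the predicted Galois behavior at each stage (so that $\phi^{-n}(0)$ genuinely has $2^n$ elements and Corollary \ref{wreath} applies). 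Once the $2$-adic bookkeeping is set up, the independence of square classes and the conclusion $G_\infty(0)\cong\operatorname{Aut}(T^2_\infty)$ should follow formally.
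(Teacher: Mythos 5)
Your plan captures the overall Odoni--Stoll flavor (a single discriminant-type product controls maximality, check it is a non-square, use a mod-$p$ argument to transfer non-squareness from $\mathbb{Q}$ up the tower), and the reduction via Corollary \ref{wreath} to linear independence of square classes is the right starting picture. But there are two concrete gaps where the proposal, as sketched, would fail or stall.

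First, you propose to show that the recursively defined discriminant quantity (roughly $\phi^{n}(\infty)$ or $\text{Disc}(p_n)$) itself stays a non-square, and to pass this from $\mathbb{Q}$ to $K_{n-1}$ by a ramification argument. The difficulty is that a prime $p$ appearing to odd order in $\text{Disc}(p_n)$ or in $p_n(1)$ may very well ramify in $K_{n-1}$ (it could divide $\text{Disc}(p_{n-1})$), destroying the odd-valuation argument. What saves the paper's argument is not a clean $2$-adic count but the rigid divisibility structure from Theorem \ref{intro: rigid}: the sequence $f_n$ (the ``$a$-free part'' of $p_n(0)$) is a rigid divisibility sequence, so the primitive part $\theta_n = \prod_{d\mid n} f_d^{\mu(n/d)}$ is coprime to every earlier $f_i$. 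This is precisely what guarantees a prime of odd valuation in $\theta_{n+1}$ is coprime to $\text{Disc}(p_{n-1})$, hence unramified in $K_{n-1}$ (Lemma \ref{lemma:sufficient-theta_n}). Your plan has no mechanism to prevent the ``new'' prime from leaking out of lower levels; the M\"obius-primitive-part device is not an optimization but a load-bearing step.

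Second, you attribute the specific form $a = -2(2m^2-1)^2$ to a $2$-adic design choice (``$-2$ times a square is a non-square in $\mathbb{Q}_2$''). That $2$-adic feature is indeed used, but only for irreducibility ($a\equiv 2\pmod 4$ suffices there, Lemma \ref{stability{a=2(mod 4)}}). The real reason for the parametrization is a different obstacle you do not address: the M\"obius product $\prod_{d\mid n}(\phi^d(0))^{\mu(n/d)}$ is not defined when $n$ is square-free, because $\phi(0)=\infty$. The paper circumvents this by replacing $0$ with $\alpha = (2m^2-1)/m$, chosen so that $\phi^3(\alpha)=\phi^3(0)$; the constraint that such $\alpha$ be rational is exactly a rational point on the (genus-zero) curve $\phi_t^3(z)=\phi_t^3(0)$, and rationalizing that curve produces $a = -2(2m^2-1)^2$. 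Your inductive scheme ``$H(n-1)\Rightarrow H(n)$'' with $H(n)$ a non-squareness of $\pm\phi^{n-1}(\infty)$ does not engage with the square-free levels at all, which are where the real work is. Finally, the actual non-square verification is not $2$-adic but uses odd primes $p$ drawn from the factors of $m-1,m,m+1$ and $2m\pm 1$ at which the orbit of $\alpha$ is eventually fixed mod $p$, so that the M\"obius product telescopes to a fixed quadratic non-residue mod $p$ (Lemma \ref{lemma:rad-divisibility} and Cases 1, 2a, 2b in the proof of Theorem \ref{thm:surjectivityB}); the condition $m\not\equiv 1\pmod 4$ is what guarantees such a prime exists, not a $\mathbb{Q}_2$ computation.
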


Observe that Theorem \ref{intro: normalform} and Corollary \ref{critspec} imply that if $\phi \in \Q(z)$ satisfies $\phi(\gamma_1) = \gamma_2$, and $\gamma_1$ and $\gamma_2$ have infinite forward orbit under $\phi$, then $\phi$ is conjugate over $\Q$ to $(z^2 + a)/z^2$ for some $a \in \Q$, so the form of $\phi$ in Theorem \ref{intro: galois} is general in some sense.
In Theorem \ref{thm:surjectivityB} we prove Theorem \ref{intro: galois}, and in fact we establish the conclusion for a somewhat larger class of $m$. We also obtain the weaker conclusion that $[K_n(0) : K_{n-1}(0)] = 2^{2^{n-1}}$ for all non-square-free $n$ provided only that $a \equiv 2 \pmod{4}$ and $a \leq -3$. See Theorem \ref{thm:surjectivityA}.

When put into the normal form of Theorem \ref{intro: galois}, the family studied in \cite{arbfew} is 
$$
\frac{z^2 + (1-b)/b^3}{z^2}, \quad b \in \Z
$$
with $\alpha = -1/b$. The proof makes fundamental use of the fact that $\alpha$ is strictly pre-periodic under $\phi$ (indeed, $\phi^2(\alpha) = \phi(\alpha) \neq \alpha$), a property it shares with the example studied in \cite[Theorem 1.2]{galrat}. By contrast, $\alpha = 0$ has infinite forward orbit for the family in our Theorem \ref{intro: galois}. Our method thus diverges significantly from that of \cite{arbfew}. We use the basic idea of taking advantage of $\alpha = 0$ being a critical point for our family, leading to special divisibility properties of its forward orbit. 

Stoll's paper \cite{stoll} established the surjectivity of the map in Eq. \ref{intro: wreathprod eqn} for $\phi(x) = x^2 + a$ and $\alpha = 0$, where $a$ belonged to the union of certain arithmetic progressions. The method is extended to further arithmetic progressions in \cite{Li-stoll1}, \cite{Li-stoll2}. 

In generalizing the methods of \cite{stoll} to the setting of non-polynomial rational functions, we encounter several technical obstacles. One of them is to reduce the theorem to showing that a certain recursively-defined sequence of positive integers $(|\theta_n|)_{n \geq 1}$ contains no squares. We do this in Lemma \ref{lemma:sufficient-theta_n}. The $\theta_n$ are constructed from the sequence $(p_n(0))_{n \geq 1}$, where $p_n(z)$ is the canonical choice for numerator of $\phi^n$ (see Section \ref{sec: defining}). It is crucial that $(p_n(0))_{n \geq 1}$ obey certain divisibility properties. 

We study $(p_n(0))_{n \geq 1}$ in some generality in Section \ref{sec: rigid}. A sequence $(c_n)_{n \geq 1}$ in a field $K$ with a complete set $A$ of non-archimedean absolute values is an \textit{$S$-rigid divisibility sequence} if $|c_n|<1$ implies $|c_n| = |c_{kn}|$ for all $k\geq 1$, and $|c_m|<1$ and $|c_n|<1$ imply $|c_{\gcd(m,n)}|<1$, for all absolute values outside of the finite set $S \subseteq A$. We generalize results in the polynomial case in \cite{quaddiv} and \cite{rice} to the setting of rational functions.
\begin{theorem}
\label{intro: rigid}
Let $K$ be the field of fractions of a Dedekind domain $R$, and fix a complete set $A$ of non-archimedean absolute values on $K$. Let $\phi \in K(z)$ have degree $d \geq 2$, and let $p,q \in R[z]$ be relatively prime polynomials with $\phi = p/q$.
Let $p_n$ and $q_n$ be as in Eq. \ref{eq:p_n,q_n}, and let $S \subseteq A$ be the set of absolute values at which either $\phi$ has bad reduction or the pair $p,q$ is not normalized. Assume that 
$$
p'(0) = q'(0) = 0.
$$
Then the sequence $(p_n(0))_{n\geq 1}$ is an $S$-rigid divisibility sequence.
\end{theorem}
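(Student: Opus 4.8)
The plan is to work one non-archimedean absolute value $|\cdot|$ in $A \setminus S$ at a time and track how $v(p_n(0))$ (where $v = -\log|\cdot|$) behaves under the recursion relating $\phi^{n+1}$ to $\phi^n$. The key structural input is the recursion for $(p_n, q_n)$ from Eq.~\ref{eq:p_n,q_n}, which for $v \notin S$ reduces correctly mod the maximal ideal $\mathfrak{p}$ at $v$ since $\phi$ has good reduction and the pair is normalized there. Writing $\phi^{n+1} = \phi \circ \phi^n$ and clearing denominators, one gets $p_{n+1} = p(\phi^n) \cdot (\text{denominator stuff})$, which at the point $z = 0$ becomes an expression of the form $p_{n+1}(0) = p\big(\phi^n(0)\big) \cdot q_n(0)^d$ up to a unit (precisely, $p_{n+1}(0) = q_n(0)^d\, p\!\left(p_n(0)/q_n(0)\right)$, interpreting this as a polynomial identity in $R$). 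The hypothesis $p'(0) = q'(0) = 0$ is what forces the critical-point behavior: it means $p(z) = p(0) + (\text{higher order})$ with the linear term vanishing, and similarly for $q$, so that $\phi(z) - \phi(0)$ vanishes to order $\geq 2$ at $z = 0$.

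First I would establish the "divisibility" half: if $v(p_n(0)) > 0$ for some $n$, then $v(p_{kn}(0)) = v(p_n(0))$ for all $k \geq 1$. The idea is that $v(p_n(0)) > 0$ means $p_n(0) \equiv 0 \pmod{\mathfrak p}$, i.e.\ $\phi^n(\bar 0) = \bar 0$ in the reduction (here one checks $q_n(0)$ is a unit at such $v$, since $p_n, q_n$ are coprime with good reduction, so $\phi^n(0) \equiv 0$ is genuinely the statement $v(p_n(0)) > 0$). Thus $\bar 0$ is periodic of period dividing $n$ under $\bar\phi$, so also $\phi^{kn}(0) \equiv 0 \pmod{\mathfrak p}$, giving $v(p_{kn}(0)) > 0$. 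To upgrade "$> 0$" to "$=$", I would use the fact that $0$ is a critical point: the local expansion $\phi^n(z) = \phi^n(0) + O(z^2)$ near $z = 0$ (valid because $0$ is critical for $\phi$, hence for every iterate, and $\phi^n(0) \equiv 0$ so the orbit stays near a critical point) shows $\phi^{kn}(z)$ differs from $\phi^n(z)$ by a quantity with valuation $\geq 2 v(p_n(0)) > v(p_n(0))$, so $v(\phi^{kn}(0)) = v(\phi^n(0) + O(p_n(0)^2)) = v(p_n(0))$ by the ultrametric inequality; this is exactly the rigid divisibility conclusion. This "critical point kills the first-order term" step is where the hypothesis $p'(0) = q'(0) = 0$ earns its keep, and making the $O(z^2)$-estimate rigorous over $R$ (rather than over a complete local field) is the main technical point I expect to grind on.

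Next, the gcd half: if $v(p_m(0)) > 0$ and $v(p_n(0)) > 0$, I want $v(p_{\gcd(m,n)}(0)) > 0$. Here $\bar 0$ is fixed by both $\bar\phi^m$ and $\bar\phi^n$ in the residue field, hence by $\bar\phi^{\gcd(m,n)}$ since the subgroup of $\Z$ generated by $m$ and $n$ is $\gcd(m,n)\Z$ and the "return times" of $\bar 0$ to itself form a subgroup (or at least a set closed under the relevant operations — more carefully, if $\bar\phi^a(\bar 0) = \bar 0$ and $\bar\phi^b(\bar 0) = \bar 0$ then $\bar\phi^{a-b}(\bar 0) = \bar 0$ provided $a \geq b$, since $\bar 0$ is then actually periodic and $\bar\phi$ is injective on its orbit). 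Running the Euclidean algorithm gives $\bar\phi^{\gcd(m,n)}(\bar 0) = \bar 0$, i.e.\ $v(p_{\gcd(m,n)}(0)) > 0$.

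Finally I would cite or quickly reprove the needed reduction facts: that for $v \notin S$, $q_n(0)$ is a unit whenever $p_n(0)$ is not, and that reduction commutes with iteration (so $\overline{\phi^n} = \bar\phi^{\,n}$), both of which follow from $\phi$ having good reduction and the normalization of $p, q$ at $v$ — these are presumably already set up in Section~\ref{sec: rigid}. Assembling: the two displayed implications in the definition of $S$-rigid divisibility sequence are precisely the two halves proved above, so $(p_n(0))_{n \geq 1}$ is an $S$-rigid divisibility sequence. I expect the genuine obstacle to be purely the local analysis at a fixed $v$ showing that criticality forces the valuation to stabilize exactly (not merely remain positive) along the subsequence $(p_{kn}(0))_k$; everything else is bookkeeping with good reduction.
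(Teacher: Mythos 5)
Your proposal is correct and follows essentially the same route as the paper: fix an absolute value outside $S$, use good reduction and normalization (Proposition \ref{prop: reduction}) to get $\max\{|p_n(0)|,|q_n(0)|\}=1$, propagate $p'(0)=q'(0)=0$ to the iterates so that $p_n(z)=p_n(0)+z^2f(z)$, $q_n(z)=q_n(0)+z^2g(z)$ with $f,g\in R[z]$, and conclude the exact valuation stabilization by the ultrametric inequality, while the gcd condition comes from periodicity of $\overline{0}$ under $\tilde\phi$ exactly as in your Euclidean-algorithm step (the paper uses the minimal return time and division with remainder, which is the same argument). The ``$O(z^2)$ over $R$'' point you flag as the main technical hurdle is handled in the paper precisely by the displayed decomposition above together with $|q_n(\text{evaluation point})|=1$, so nothing further is needed.
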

See Section \ref{subsection: normalized and reduction} for background on normalized form. When $R$ is a PID the absolute values at which the pair $p,q$ is not normalized may be eliminated; See Proposition \ref{prop:PID}. The absolute values where $\phi$ has bad reduction, on the other hand, present deeper obstacles to rigid divisibility. See Example \ref{ex:rigid} for an illustration.

A second piece of the proof of Theorem \ref{intro: galois} is establishing the irreducibility over $\Q$ of the polynomials $p_n$. We make use of results in \cite{galrat} for this, as well as for calculating the discriminant of $p_n$.

The final major step in the proof is to show $|\theta_n|$ is not a square for $n \geq 1$. To accomplish this, we show that $|\theta_n|$ coincides modulo rational squares with 
\begin{equation} \label{eq:prod}
\prod_{d \mid n} (\phi^d(0))^{\mu(n/d)}.
\end{equation}
We then look for primes $p$ such that $\phi^i(0)$ maps to a fixed point modulo $p$ after a small number of iterates. This allows us to compute 
Eq. \ref{eq:prod} modulo $p$, and when $p$ is well-chosen the expression in Eq. \ref{eq:prod} turns out to be a non-quadratic residue modulo $p$. 

The fact that $\phi(0) = \infty$ means that Eq. \ref{eq:prod} is not well-defined when $n$ is square-free. This is a significant obstacle, which we circumvent by replacing $0$ with non-zero $\alpha$ such that $\phi^i(\alpha) = \phi^i(0)$ for some $i$. The smallest $i$ for which such $\alpha$ exists is $i = 3$, and happily the relevant curve has genus zero. To obtain $\alpha \in \Q$ we must take $a$ to be the $x$-coordinate of a rational point on this curve, which accounts for the hypothesis on $a$ in Theorem \ref{intro: galois}.

\section{Background on reduction and ramification for rational functions}

\label{sec: background}

We wish to develop a theory of bicritical rational functions in some generality, so in this section we give background material working over a general field. The standard reference is \cite[Chapters 1 and 2]{jhsdynam}, but we provide a self-contained treatment in order to accommodate our desired generality of the ground field.

\subsection{Defining polynomials for iterates of a rational function}

\label{sec: defining}

Let $K$ be a field with a fixed algebraic closure $\overline{K}$. A rational function $\phi$ of degree $d \geq 0$ defined over a field $K$ is a map $\PP^1 \to \PP^1$ given by $\phi([Z,W]) =  [P(Z,W), Q(Z,W)]$, where $P$ and $Q$ are degree-$d$ homogenous polynomials in $K[Z,W]$ with no common roots $[\alpha, \beta]$ in $\PP^1(\Kbar)$. For $n \geq 2$, the $n$th iterate of $\phi$ is given by $\phi^n = [P_n(Z,W), Q_n(Z,W)]$, where $P_1 = P$, $Q_1 = Q$, and for $n \geq 2$,
\begin{equation}
    \label{eq: PQdef}
\begin{split}
P_{n}(Z,W) & = P(P_{n-1}(Z,W), Q_{n-1}(Z,W))  \\
    Q_{n}(Z,W) & = Q(P_{n-1}(Z,W), Q_{n-1}(Z,W)).
\end{split}
\end{equation}

We have the following elementary but useful properties of $P_n$ and $Q_n$.
\begin{proposition} \label{prop:basic1}
    For all $n \geq 1$, $P_n$ and $Q_n$ are homogeneous polynomials of degree $d^n$ having no common roots in $\PP^1(\Kbar)$.
\end{proposition}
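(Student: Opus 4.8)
The plan is to induct on $n$. The base case $n = 1$ holds by definition: $P_1 = P$ and $Q_1 = Q$ are homogeneous of degree $d$ with no common roots in $\PP^1(\Kbar)$, since $\phi$ is a degree-$d$ rational function. For the inductive step, assume $P_{n-1}, Q_{n-1}$ are homogeneous of degree $d^{n-1}$ with no common zero in $\PP^1(\Kbar)$. First I would check homogeneity and degree: since $P$ is homogeneous of degree $d$, substituting the degree-$d^{n-1}$ homogeneous forms $P_{n-1}, Q_{n-1}$ into $P$ yields a form that is homogeneous of degree $d \cdot d^{n-1} = d^n$, and likewise for $Q_n$; this is a routine computation with homogeneous substitution. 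The degree claim requires a small extra point, namely that no unexpected cancellation drops the degree — but this follows once we know $P_n, Q_n$ have no common root, since a form of degree $< d^n$ together with a genuinely degree-$d^n$ form could not... actually the cleanest route is to argue homogeneity and "degree at most $d^n$" directly from the substitution, then upgrade to "degree exactly $d^n$" after the no-common-root statement, using that a nonzero homogeneous polynomial in two variables of degree $e$ has exactly $e$ roots in $\PP^1(\Kbar)$ with multiplicity.

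The heart of the argument is the no-common-root claim, and this is the step I expect to be the main obstacle. Suppose $[\alpha,\beta] \in \PP^1(\Kbar)$ satisfies $P_n(\alpha,\beta) = Q_n(\alpha,\beta) = 0$, i.e. $P(u,v) = Q(u,v) = 0$ where $u = P_{n-1}(\alpha,\beta)$ and $v = Q_{n-1}(\alpha,\beta)$. If $[u,v]$ were a genuine point of $\PP^1(\Kbar)$ — that is, if $(u,v) \neq (0,0)$ — this would contradict the inductive hypothesis (applied at level $1$) that $P$ and $Q$ have no common root in $\PP^1(\Kbar)$. So we would be forced into $u = v = 0$, meaning $[\alpha,\beta]$ is a common root of $P_{n-1}$ and $Q_{n-1}$, contradicting the inductive hypothesis at level $n-1$. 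Hence no such $[\alpha,\beta]$ exists. The subtlety to be careful about is exactly the passage "$P(u,v) = Q(u,v) = 0$ with $(u,v) \neq (0,0)$ contradicts no-common-root": this is just the definition of $P, Q$ having no common zero in $\PP^1(\Kbar)$, so it is clean, but one must phrase the dichotomy $(u,v) = (0,0)$ versus $(u,v) \neq (0,0)$ carefully because $P_{n-1}(\alpha,\beta)$ and $Q_{n-1}(\alpha,\beta)$ are now elements of $\Kbar$, not a point of projective space a priori.

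Finally, with the no-common-root statement in hand, I would return to pin down the degree. A homogeneous polynomial $F \in \Kbar[Z,W]$ of the form obtained by substitution has degree exactly $d^n$ unless its leading behavior cancels; but if $\deg P_n < d^n$, then since $\deg Q_n \le d^n$ and the pair has no common root, we can still count: both $P_n$ and $Q_n$ factor completely over $\Kbar$ into linear forms, and having no common linear factor together with the substitution structure forces each to have the full complement of $d^n$ factors. Concretely, $P_n(Z,W) = P(P_{n-1}, Q_{n-1})$; writing $P(X,Y) = c\prod_{i}(b_i X - a_i Y)$ as a product of $d$ linear forms over $\Kbar$ (with $c \neq 0$), we get $P_n = c\prod_i (b_i P_{n-1} - a_i Q_{n-1})$, and each factor $b_i P_{n-1} - a_i Q_{n-1}$ is a nonzero homogeneous form of degree $d^{n-1}$ — nonzero because $P_{n-1}, Q_{n-1}$ are linearly independent over $\Kbar$ (they have no common root, so neither is a scalar multiple of the other, and in fact neither can be zero). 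Thus $\deg P_n = d \cdot d^{n-1} = d^n$ exactly, and symmetrically for $Q_n$. This completes the induction.
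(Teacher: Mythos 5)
Your proof is correct and the central induction on common roots is exactly the paper's argument: assume a common root of $P_n, Q_n$ at $[\alpha,\beta]$, observe that $(P_{n-1}(\alpha,\beta), Q_{n-1}(\alpha,\beta))$ would then be a common zero of the pair $P,Q$, force it to be $(0,0)$, and contradict the inductive hypothesis. For the degree statement the paper is slightly more economical — it notes that each of $P_n, Q_n$ is visibly a sum $\sum_{i=0}^{d^n} c_i Z^{d^n-i}W^i$ and that the no-common-roots conclusion already forbids either from being the zero polynomial — whereas you reach the same end by factoring $P$ into linear forms over $\Kbar$ and invoking linear independence of $P_{n-1}, Q_{n-1}$; both are valid and rest on the same no-common-roots fact.
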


\begin{proof}
    To show the statement about common roots we induct on $n$; the base case $n = 1$ is true by assumption. If ${P_{n}}$ and ${Q_{n}}$ had a common root $[\alpha, \beta] \in \PP^1(\Kbar)$ for $n \geq 2$, then by Eq. \ref{eq: PQdef}, $[{P_{n-1}}(\alpha, \beta), {Q_{n-1}}(\alpha, \beta)]$ would be common root of ${P}$ and ${Q}$. Hence $[{P_{n-1}}(\alpha, \beta), {Q_{n-1}}(\alpha, \beta)]$ cannot be in $\PP^1(\overline{K})$, i.e. ${P_{n-1}}(\alpha, \beta) = {Q_{n-1}}(\alpha, \beta) = 0$. But this contradicts the inductive hypothesis. 

    A straightforward induction shows that both $P_n$ and $Q_n$ have the form $\sum_{i = 0}^{d^n} c_i Z^{d^n-i}W^{i}$ with $c_i \in K$. Because $P_n$ and $Q_n$ have no common roots in $\PP^1(\Kbar)$, neither can be identically zero, proving that both have degree $d^n$. 
\end{proof}

We frequently dehomogenize $P_n$ and $Q_n$ by taking $W = 1$, giving relatively prime $p_n(z), q_n(z) \in K[z]$ with $\max \{\deg p_n, \deg q_n\} = d^n$. Note that $\phi^n(z) = p_n(z)/q_n(z)$ provided $q_n(z) \neq 0$, and we interpret $\phi^n(z)$ as $\infty$ if $q_n(z) = 0$. Because $\phi^n = \phi \circ \phi^{n-1} = \phi^{n-1} \circ \phi$, we have $\phi^n = \phi(p_{n-1}/q_{n-1}) = \phi^{n-1}(p/q)$. 

Writing $p(z)=\sum_{i=0}^{d}a_iz^i$ and $q(z)=\sum_{i=0}^{e}b_iz^i$ and dehomogenizing Eq. \ref{eq: PQdef} gives 
$p_n(z) = P(p_{n-1}(z), q_{n-1}(z))$ and $q_n(z) = Q(p_{n-1}(z), q_{n-1}(z))$. More explicitly, if $p(z)=\sum_{i=0}^{d_p}a_iz^i$ and $q(z)=\sum_{i=0}^{d_q}b_iz^i$, with $\max\{d_p, d_q\} = d$, then for $n \geq 2,$
\begin{equation}
\label{eq:p_n,q_n}
\begin{split}
    p_{n} = \sum_{i=0}^{d_p} a_ip_{n-1}^iq_{n-1}^{d_p-i},\quad q_{n} = \sum_{i=0}^{d_q} b_ip_{n-1}^iq_{n-1}^{d_p-i} \qquad \text{if $d_p \geq d_q$;} \\
p_{n} = \sum_{i=0}^{d_p} a_ip_{n-1}^iq_{n-1}^{d_q-i},\quad q_{n} = \sum_{i=0}^{d_q} b_ip_{n-1}^iq_{n-1}^{d_q-i} \qquad \text{if $d_p < d_q$.}
\end{split}
\end{equation}

Observe that replacing $P$ and $Q$ by $cP$ and $cQ$ with $c \in K \setminus \{0\}$ does not change $\phi$. This creates ambiguity in speaking of ``the" numerator and denominator of $\phi$, and this ambiguity extends to iterates of $\phi$. We will disambiguate by fixing a choice of $P$ and $Q$ (or equivalenlty, $p$ and $q$) and noting that Eq.s \ref{eq: PQdef} and \ref{eq:p_n,q_n} specify $P_n$ and $Q_n$ (and thus $p_n$ and $q_n$) for all $n \geq 1$. 

\subsection{Normalized form and reduction with respect to a non-archimedean absolute value} \label{subsection: normalized and reduction}

Let $K$ be a field, $|\cdot|$ a non-archimedean absolute value on $K$, and $k$ the residue field of $K$ with respect to $|\cdot|$. We say that a pair $P,Q \in K[Z,W]$ of homogeneous polynomials is \textit{normalized} with respect to $|\cdot|$ if all coefficients of $P$ and $Q$ have absolute value at most one, and at least one coefficient of $P$ or $Q$ has absolute value 1. In this case we also say that their dehomogenizations $p,q \in K[z]$ are a normalized pair. If $\phi = [P,Q]$, the reduction $\tilde{\phi}$ of $\phi$ with respect to $|\cdot|$ is then defined as $[\tilde{P},\tilde{Q}]$, where the two entries are the coefficient-wise reductions of $P$ and $Q$.  Any other normalized pair $P'$ and $Q'$ with $\phi = [P',Q']$ yields the same reduction \cite[p.52]{jhsdynam}. We say $\phi$ has \textit{good reduction at $|\cdot|$} if $\deg \phi = \deg \tilde{\phi}$, or equivalently if $\phi = [P,Q]$ where the pair $P, Q$ is normalized and $\tilde{P}$ and $\tilde{Q}$ have no common roots in $\PP^1(\overline{k})$. See \cite[Section 2.3]{jhsdynam} for a complete treatment. 

\begin{proposition} \label{prop: reduction}
Fix a non-archimedean absolute value $|\cdot|$ on $K$, 
and let $\phi = [P,Q]$ be defined over $K$, where the pair $P, Q$ is normalized. Assume that $\phi$ has good reduction at $|\cdot|$. Then for all $n \geq 1$, the pair $P_n, Q_n$ is normalized and $\widetilde{P_n}$ and $\widetilde{Q_n}$ have no common roots in $\PP^1(\overline{k})$. In particular, $\max\{|p_n(0)|, |q_n(0)|\} = 1$.
\end{proposition}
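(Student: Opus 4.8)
The plan is to prove the statement by induction on $n$, using good reduction to propagate the two claimed properties -- that $P_n, Q_n$ is a normalized pair and that $\widetilde{P_n}, \widetilde{Q_n}$ have no common root in $\PP^1(\overline{k})$ -- simultaneously. The base case $n = 1$ is exactly the hypothesis: $P, Q$ is normalized and good reduction says precisely that $\widetilde{P}, \widetilde{Q}$ have no common root in $\PP^1(\overline{k})$.

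For the inductive step, assume the pair $P_{n-1}, Q_{n-1}$ is normalized with $\widetilde{P_{n-1}}, \widetilde{Q_{n-1}}$ sharing no root in $\PP^1(\overline{k})$. First I would check that $P_n, Q_n$ has all coefficients of absolute value $\leq 1$: this is immediate from Eq.~\ref{eq: PQdef}, since $P_n = P(P_{n-1}, Q_{n-1})$ is a polynomial-with-coefficients-of-absolute-value-$\leq 1$ evaluated at inputs that are themselves polynomials with coefficients of absolute value $\leq 1$, and the non-archimedean inequality is preserved under sums and products. The same applies to $Q_n$. Because all coefficients have absolute value $\leq 1$, coefficient-wise reduction commutes with composition, so $\widetilde{P_n} = \widetilde{P}(\widetilde{P_{n-1}}, \widetilde{Q_{n-1}})$ and $\widetilde{Q_n} = \widetilde{Q}(\widetilde{P_{n-1}}, \widetilde{Q_{n-1}})$, both viewed as homogeneous forms over $k$ (possibly of degree less than $d^n$ a priori, if leading coefficients vanish; the argument below will rule that out).

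Next I would establish the no-common-root property for the reductions, which I expect to be the main point. Suppose $[\alpha, \beta] \in \PP^1(\overline{k})$ is a common root of $\widetilde{P_n}$ and $\widetilde{Q_n}$. Set $[\alpha', \beta'] = [\widetilde{P_{n-1}}(\alpha, \beta), \widetilde{Q_{n-1}}(\alpha, \beta)]$. By the inductive hypothesis $\widetilde{P_{n-1}}, \widetilde{Q_{n-1}}$ have no common root, so $\alpha', \beta'$ are not both zero, i.e. $[\alpha', \beta'] \in \PP^1(\overline{k})$ is a genuine point. Then $\widetilde{P}(\alpha', \beta') = \widetilde{P_n}(\alpha,\beta) = 0$ and likewise $\widetilde{Q}(\alpha', \beta') = 0$, so $[\alpha', \beta']$ is a common root of $\widetilde{P}$ and $\widetilde{Q}$ in $\PP^1(\overline{k})$, contradicting good reduction of $\phi$. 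Hence $\widetilde{P_n}, \widetilde{Q_n}$ have no common root. This same contradiction also shows neither $\widetilde{P_n}$ nor $\widetilde{Q_n}$ is the zero form, hence at least one of them has a coefficient of absolute value $1$, so the pair $P_n, Q_n$ is normalized; and since they have no common root in $\PP^1(\overline{k})$ they cannot both drop degree, so in fact $\widetilde{P_n}, \widetilde{Q_n}$ are the honest degree-$d^n$ reductions of $P_n, Q_n$ and $\phi^n$ has good reduction. This closes the induction.

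Finally, the ``in particular'' clause: evaluating at $[Z,W] = [0,1]$, we have $p_n(0) = P_n(0,1)$ and $q_n(0) = Q_n(0,1)$, and $[p_n(0), q_n(0)] = \phi^n([0,1])$ as a point of $\PP^1(\overline K)$. Since the pair $P_n, Q_n$ is normalized, $\max\{|p_n(0)|, |q_n(0)|\} \leq 1$; and since $\widetilde{P_n}, \widetilde{Q_n}$ have no common root in $\PP^1(\overline k)$, they cannot both vanish at $[\widetilde{0}, \widetilde{1}] = [0,1]$, so at least one of $|p_n(0)|, |q_n(0)|$ equals $1$. Thus $\max\{|p_n(0)|, |q_n(0)|\} = 1$. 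The only subtlety worth being careful about is the interchange of reduction and composition in the inductive step, which rests squarely on the coefficients of $P_{n-1}, Q_{n-1}$ lying in the valuation ring -- exactly what the normalization hypothesis guarantees -- so there is no real obstacle once the induction is set up correctly.
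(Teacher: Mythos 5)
Your proof is correct and takes essentially the same route as the paper: the key points are that coefficient-wise reduction commutes with the composition defining $P_n, Q_n$ (valid because normalization keeps all coefficients in the valuation ring), that a common root of $\widetilde{P_n},\widetilde{Q_n}$ would push forward to a common root of $\widetilde{P},\widetilde{Q}$ --- the paper gets this by citing Proposition \ref{prop:basic1} applied to the reduced map over $k$, while you re-prove that induction inline --- and that normalization of the pair $P_n, Q_n$ follows since otherwise both reductions would vanish identically. Your explicit derivation of $\max\{|p_n(0)|,|q_n(0)|\}=1$ by evaluating at $[0,1]$ spells out a step the paper leaves implicit, and is fine.
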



\begin{proof} By hypothesis $\tilde{P}$ and $\tilde{Q}$ are homogeneous polynomials defined over $k$ with no common roots in $\PP^1(\overline{k})$. The reduction homomorphism respects the composition in Eq. \ref{eq: PQdef}, and we may thus apply Proposition \ref{prop:basic1} to conclude that $\widetilde{P_n}$ and $\widetilde{Q_n}$ have no common roots in $\PP^1(\overline{k})$. 

Because the pair $P,Q$ is normalized, they have no coefficients of absolute value exceeding 1. The same thus holds for $P_n$ and $Q_n$. If the pair $P_n, Q_n$ were not normalized, then both $\tilde{P_n}$ and $\tilde{Q_n}$ would be identically zero, contradicting the previous paragraph.
\end{proof}

\subsection{Ramification and tame Riemann-Hurwitz}

Let $f \in K[z]$ be a non-zero polynomial of degree $d \geq 0$. For any $\alpha \in \Kbar$ the polynomial 
$$g(z) = f(z + \alpha)  = \sum_{i = 0}^d b_iz^i \in \Kbar[z]
$$ 
has degree $d$ and satisfies $f(z) = g(z - \alpha)$. Define the order of vanishing $\text{ord}_\alpha(f)$ of $f$ at $\alpha$ to be the smallest $i \geq 0$ with $b_i \neq 0$.
Suppose that $\phi(z) = p(z)/q(z) \in K(z)$ is a non-zero rational function of degree $d \geq 1$, with $p(z)$ and $q(z)$ having no common root in $\Kbar$. Given $\alpha \in \Kbar$ we see that $p(z)q(\alpha) - q(z)p(\alpha)$ cannot be identically zero, and we define the ramification index of $\phi$ at $\alpha$ to be
\begin{equation} \label{ramdef}
e_\alpha(\phi) = \textrm{ord}_\alpha(p(z)q(\alpha) - q(z)p(\alpha)).
\end{equation}
Observe that $1 \leq e_\alpha(\phi) \leq d$. When $q(\alpha) \neq 0$, i.e. $\phi(\alpha) \neq \infty$, $e_\alpha(\phi)$ is the order of vanishing of the numerator of $\phi(z) - \phi(\alpha)$. Let $\psi(z) = 1/\phi(1/z)$, and define $e_\infty(\phi) = e_0(\psi)$. One easily checks that the ramification index is conjugation-invariant: if $\mu \in \PGL_2(\Kbar)$ is a linear fractional transformation and $\psi = \mu \circ \phi \circ \mu^{-1}$, then $e_\alpha(\phi) = e_{\mu(\alpha)}(\psi)$.

We say that $\alpha \in \PKbar$ is a \textit{critical point} for $\phi \in K(z)$ if $(\mu \circ \phi \circ \mu^{-1})'(\mu(\alpha)) \neq 0$, where $\mu \in \PGL_2(\Kbar)$ satisfies $\mu(\alpha) \neq \infty$ and $\mu(\phi(\alpha)) \neq \infty$. Note that $\alpha$ is a critical point if and only if $e_\alpha(\phi) > 1$. We have the following elementary result, whose proof we include for completeness.
\begin{proposition} \label{critical numerator}
Let $K$ be a field of characteristic $\ell \geq 0$ and $\phi(z) = p(z)/q(z) \in K(z)$ a rational function of degree $d \geq 2$, where $p(z)$ and $q(z)$ have no common root in $\Kbar$. If $\ell \nmid e_\alpha(\phi)$ for some $\alpha \in \PKbar$, then $p'(z)q(z) - q'(z)p(z)$ is not identically zero and
\begin{equation} \label{firstord}
e_\alpha(\phi) - 1 = \textrm{ord}_\alpha (p'(z)q(z) - q'(z)p(z))
\end{equation}
if $\alpha \in \Kbar$ and 
\begin{equation} \label{inford}
e_\alpha(\phi) - 1 = 2d - 2 - \deg[p'(z)q(z) - q'(z)p(z)]
\end{equation}
if $\alpha = \infty$.
\end{proposition}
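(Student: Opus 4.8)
The plan is to reduce everything to the finite case by a direct computation with the defining formula \eqref{ramdef}, and then handle $\alpha=\infty$ by the substitution $\psi(z)=1/\phi(1/z)$ exactly as in the definition of $e_\infty$. First I would treat $\alpha\in\Kbar$. Set $W(z) = p'(z)q(z) - q'(z)p(z)$, the Wronskian-type expression. The key identity is that, for a fixed $\alpha$, the polynomial $F_\alpha(z) := p(z)q(\alpha) - q(z)p(\alpha)$ has derivative $F_\alpha'(z) = p'(z)q(\alpha) - q'(z)p(\alpha)$, and one checks the algebraic identity
\[
q(\alpha)\,W(z) \;=\; q(z)\,F_\alpha'(z) \;-\; q'(z)\,F_\alpha(z).
\]
(When $q(\alpha)=0$ one instead uses the analogous identity with $p(\alpha)$, or conjugates by $z\mapsto 1/z$ to move $\phi(\alpha)$ away from $\infty$; since $p,q$ have no common root, not both $p(\alpha)$ and $q(\alpha)$ vanish.) Writing $e = e_\alpha(\phi) = \operatorname{ord}_\alpha(F_\alpha)$, expand $F_\alpha(z) = c_e (z-\alpha)^e + c_{e+1}(z-\alpha)^{e+1} + \cdots$ with $c_e \neq 0$ in $\Kbar[z]$; then $F_\alpha'(z) = e\,c_e (z-\alpha)^{e-1} + \cdots$, and since $\ell \nmid e$ we have $e\,c_e \neq 0$, so $\operatorname{ord}_\alpha(F_\alpha') = e-1$. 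Substituting into the identity and using that $q(\alpha)\neq 0$ (so $\operatorname{ord}_\alpha q = 0$) while $\operatorname{ord}_\alpha(q F_\alpha') = e-1$ and $\operatorname{ord}_\alpha(q' F_\alpha) \geq e > e-1$, we conclude $\operatorname{ord}_\alpha(W) = e - 1$, which is \eqref{firstord}; in particular $W \not\equiv 0$.

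For $\alpha = \infty$, apply the already-established finite case to $\psi(z) = 1/\phi(1/z)$ at the point $0$. Writing $\psi = \tilde p/\tilde q$ in a normalized way (clearing denominators), one has $\tilde p(z) = z^d q(1/z)$ and $\tilde q(z) = z^d p(1/z)$ up to the usual scaling, and a direct computation relates the Wronskian of $(\tilde p, \tilde q)$ to $z^{2d-2}W(1/z)$. Since $e_\infty(\phi) = e_0(\psi)$ and $\ell \nmid e_0(\psi)$, the finite case gives $e_0(\psi) - 1 = \operatorname{ord}_0(\tilde p'\tilde q - \tilde q'\tilde p) = \operatorname{ord}_0\big(z^{2d-2}W(1/z)\big) = 2d-2 - \deg W$, which is \eqref{inford}. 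One must check the bookkeeping of degrees carefully here: $\deg W \leq 2d-2$ always (the top-degree terms in $p'q$ and $q'p$ cancel), so $\operatorname{ord}_0$ of $z^{2d-2}W(1/z)$ is exactly $2d-2-\deg W \geq 0$, and this matches $e_\infty - 1 \geq 0$.

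The main obstacle I anticipate is bookkeeping rather than conceptual: getting the normalization and degree-shifting right when passing to $\psi$, and correctly handling the subcase $q(\alpha)=0$ (i.e.\ $\phi(\alpha) = \infty$) in the finite case, where the clean identity above degenerates. The cleanest fix for the latter is to note that ramification is conjugation-invariant (stated just before the proposition), choose $\mu \in \PGL_2(\Kbar)$ with $\mu(\alpha), \mu(\phi(\alpha)) \neq \infty$, prove \eqref{firstord} for $\mu\circ\phi\circ\mu^{-1}$ at $\mu(\alpha)$, and then verify that both sides of \eqref{firstord} are unchanged under such conjugation — the left side by conjugation-invariance of $e_\alpha$, and the right side because $\operatorname{ord}_\alpha(W)$ transforms predictably (it equals $e_\alpha(\phi)-1$, which is intrinsic). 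Actually, to avoid circularity in that last step, it is simplest to just do the two cases $p(\alpha)\neq 0$ and $q(\alpha)\neq 0$ separately using the symmetric versions of the identity, since at least one holds.
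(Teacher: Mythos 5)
Your proof is correct and follows essentially the same route as the paper's: establish the finite-$\alpha$ case via the Wronskian-type identity relating $p'q - q'p$ to $F_\alpha$ and $F_\alpha'$, use $\ell \nmid e$ to get $\operatorname{ord}_\alpha F_\alpha' = e-1$, and then handle $\alpha = \infty$ by the $z \mapsto 1/z$ substitution together with the identity $\tilde p'\tilde q - \tilde q'\tilde p = z^{2d-2}W(1/z)$. The only organizational difference is that you invoke the already-proved finite case as a black box for $\psi$ at $0$, whereas the paper redoes the differentiation directly for $\psi$; your version is slightly cleaner, and your instinct at the end to treat $p(\alpha)\neq 0$ and $q(\alpha)\neq 0$ via symmetric identities rather than via conjugation (which risks circularity) matches what the paper actually does.
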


\begin{proof}
Let $\alpha \in \Kbar$ and put $e = e_\alpha(\phi)$. From \eqref{ramdef} we obtain 
\begin{equation} \label{eq: order of p over q}
p(z + \alpha)q(\alpha) - q(z+\alpha)p(\alpha) = z^e h(z)
\end{equation}
for some $h \in \Kbar[z]$ with $h(0) \neq 0$. Suppose that $q(\alpha) \neq 0$. Dividing Eq. \ref{eq: order of p over q} through by $q(z + \alpha)q(\alpha)$, differentiating, and multiplying through by $(q(z + \alpha)q(\alpha))^2$ yields 
$$p'(z + \alpha)q(z+\alpha) - q'(z + \alpha)p(z+\alpha) = ez^{e-1}q(z + \alpha)q(\alpha)h(z) + z^eg(z),$$
for some $g \in \Kbar[z]$. 
Because $e \neq 0$ in $K$, $h(0) \neq 0$, and $q(\alpha) \neq 0$, we have that $p'(z)q(z) - q'(z)p(z)$ is not identically zero, and Eq. \ref{firstord} follows. 

If $q(\alpha) = 0$, then Eq. \ref{eq: order of p over q} becomes $q(z+\alpha)p(\alpha) = z^e h(z)$ for some $h \in \Kbar[z]$ with $h(0) \neq 0$, and thus $\textrm{ord}_\alpha(q(z)) = e$. Because $e \neq 0$ in $K$ and $p(\alpha) \neq 0$, it follows that $e - 1 = \textrm{ord}_\alpha(q'(z)) = \textrm{ord}_\alpha(q'(z)p(z))$, and in particular $q'(z)p(z)$ is not identically zero. But $\textrm{ord}_\alpha(p'(z)q(z)) \geq e$, and Eq. \ref{firstord} follows. 

In the case where $\alpha = \infty$, observe that $\deg (p'(z)q(z) - q'(z)p(z)) \leq 2d - 2$, and note that $1/\phi(1/z) = q_1(z)/p_1(z)$, where $q_1(z) = z^d q(1/z)$ and $p_1(z) = z^d p(1/z)$. By definition of $e_\infty(\phi)$ we then have
$$
q_1(z)p_1(0) - p_1(z)q_1(0) = z^e h(z)
$$
for some $h \in \Kbar[z]$ with $h(0) \neq 0$. Assume that $p_1(0) \neq 0$. Dividing by $p_1(z)p_1(0)$, differentiating, multiplying by $(p_1(z)p_1(0))^2$, and using $e \neq 0$ and $h(0) \neq 0$ shows that $\textrm{ord}_0(q_1'(z)p_1(z) - p_1'(z)q_1(z)) = e-1$, and in particular $q_1'(z)p_1(z) - p_1'(z)q_1(z)$ is not identically zero. One readily verifies that for any $f(z) \in \Kbar[z]$ and $n \geq \deg f$, $\textrm{ord}_0(z^nf(1/z)) = n - \deg f$ and $(z^nf(1/z))' = z^nf'(1/z)(-1/z^2) + nz^{n-1}f(1/z)$. 
A straightforward calculation then gives
$$
q_1'(z)p_1(z) - p_1'(z)q_1(z) = z^{2d}(q'(1/z)p(1/z) - p'(1/z)q(1/z))(-z^{-2}),
$$
from which Eq. \ref{inford} follows. 

Finally, the case where $\alpha = \infty$ and $p_1(0) = 0$ is handled similarly to the case where $\alpha \in \Kbar$ and $q(\alpha) = 0$ above. 
\end{proof}

\begin{corollary}[Tame Riemann-Hurwitz for $\mathbb{P}^1$] \label{cor: RH}
    Let $K$ be a field of characteristic $\ell \geq 0$ and $\phi \in K(z)$ a rational function of degree $d \geq 2$. If $\ell \nmid e_\alpha(\phi)$ for every $\alpha \in \PKbar$, then
    \begin{equation} \label{eq: RH}
        2d - 2 = \sum_{\alpha \in \PKbar} (e_\alpha(\phi) - 1).
    \end{equation}
\end{corollary}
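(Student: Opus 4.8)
The plan is to apply Proposition~\ref{critical numerator} pointwise and sum the resulting local contributions. Write $\phi = p/q$ with $p, q \in K[z]$ having no common root in $\Kbar$ and $\max\{\deg p, \deg q\} = d$, and set $W(z) = p'(z)q(z) - q'(z)p(z)$. Since the hypothesis $\ell \nmid e_\alpha(\phi)$ holds for \emph{every} $\alpha \in \PKbar$, it holds in particular for some $\alpha$, so Proposition~\ref{critical numerator} applies and tells us that $W$ is not identically zero; moreover $\deg W \leq 2d - 2$, as observed in the proof of that proposition.

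First I would dispose of the finite points. For each $\alpha \in \Kbar$ the hypothesis gives $\ell \nmid e_\alpha(\phi)$, so Eq.~\ref{firstord} yields $e_\alpha(\phi) - 1 = \textrm{ord}_\alpha(W)$. Because $W$ is a nonzero polynomial over $\Kbar$ and $\Kbar$ is algebraically closed, $W$ factors as $c\prod_i (z - \alpha_i)$ with $c \in \Kbar^\times$; hence $\textrm{ord}_\alpha(W) = 0$ for all but finitely many $\alpha \in \Kbar$, and $\sum_{\alpha \in \Kbar} \textrm{ord}_\alpha(W) = \deg W$. In particular $e_\alpha(\phi) = 1$ for all but finitely many finite $\alpha$, so the sum in Eq.~\ref{eq: RH} is genuinely finite, and
$$
\sum_{\alpha \in \Kbar} \bigl(e_\alpha(\phi) - 1\bigr) = \deg W .
$$
Then I would handle $\alpha = \infty$: Eq.~\ref{inford} gives $e_\infty(\phi) - 1 = 2d - 2 - \deg W$. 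Adding this to the displayed equation gives
$$
\sum_{\alpha \in \PKbar} \bigl(e_\alpha(\phi) - 1\bigr) = \deg W + \bigl(2d - 2 - \deg W\bigr) = 2d - 2 ,
$$
which is Eq.~\ref{eq: RH}.

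There is no serious obstacle here; the corollary is essentially bookkeeping once Proposition~\ref{critical numerator} is available. The only points that require a moment of attention are the nonvanishing of $W$ (which legitimizes the identity $\sum_\alpha \textrm{ord}_\alpha(W) = \deg W$ and guarantees the ramification sum is finite) and the bound $\deg W \leq 2d - 2$ (which makes the infinity term $2d - 2 - \deg W$ nonnegative, consistent with $e_\infty(\phi) \geq 1$) --- both of which are supplied by Proposition~\ref{critical numerator} and its proof.
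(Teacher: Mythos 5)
Your proof is correct and takes essentially the same route as the paper: both reduce Eq.~\ref{eq: RH} to the identity $\sum_{\alpha \in \Kbar} \textrm{ord}_\alpha(W) = \deg W$ for the nonzero polynomial $W = p'q - q'p$ and then invoke Proposition~\ref{critical numerator} at finite points and at infinity. You have merely spelled out the bookkeeping that the paper compresses into two sentences.
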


\begin{proof}
    For non-zero $f \in K[z]$, unique factorization implies that $\sum_{\alpha \in \Kbar} \textrm{ord}_\alpha(f(z)) = \deg f$. The corollary then follows from Proposition \ref{critical numerator}.
\end{proof}

Under the hypotheses of Corollary \ref{cor: RH}, $\phi$ must have at least two critical points, since $e_\alpha(\phi) \leq d$. If $\phi$ is further assumed to be bicritical, then \eqref{eq: RH} gives $e_\phi(\gamma_1) = e_\phi(\gamma_2) = d$. We remark that if the assumption $\ell \nmid e_\alpha(\phi)$ is dropped, then it is possible for $\phi$ to have only a single critical point, though this requires $\ell \mid d$ or $\ell \mid d-1$. See \cite{faber} for more details. 


\section{Conjugacy and normal forms for bicritical rational functions}

We begin with a result on the smallest extension of $K$ containing the critical points of a bicritical rational function. By convention we take $K(\infty) = K$.

\begin{theorem} \label{critfield}
Let $K$ be a field of characteristic $\ell \geq 0$, and let $\phi \in K(z)$ be bicritical of degree $d \geq 2$ with critical points $\gamma_1, \gamma_2 \in \PKbar$. Assume that $\ell = 0$ or $\ell > d$. If $K(\gamma_1, \gamma_2) \neq K$ then $[K(\gamma_1, \gamma_2) : K] = 2$ and $\gamma_1$ and $\gamma_2$ are Galois-conjugate over $K$.
\end{theorem}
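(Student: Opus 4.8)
The plan is to exploit the fact that, under the tame hypothesis $\ell = 0$ or $\ell > d$, Corollary \ref{cor: RH} forces $e_{\gamma_1}(\phi) = e_{\gamma_2}(\phi) = d$, so that $\gamma_1$ and $\gamma_2$ are precisely the two roots (in $\PKbar$) of the ``Wronskian'' polynomial $w(z) = p'(z)q(z) - q'(z)p(z)$ together with its behavior at $\infty$. More precisely, writing $\phi = p/q$ with $p, q \in K[z]$ and invoking Proposition \ref{critical numerator}: a finite $\alpha \in \Kbar$ is a critical point iff $\text{ord}_\alpha(w) = d-1$, and $\infty$ is a critical point iff $2d - 2 - \deg w = d - 1$, i.e. $\deg w = d-1$. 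Since $w \in K[z]$ has coefficients in $K$, the multiset of its roots in $\Kbar$ (with multiplicity) is Galois-stable over $K$. The key observation is that the critical locus $\{\gamma_1, \gamma_2\}$ is therefore a Galois-stable subset of $\PKbar$ of size $2$: indeed $\text{Gal}(\Kbar/K)$ permutes critical points because ramification indices are preserved by field automorphisms (an automorphism $\sigma$ sends $\phi = p/q$ to itself and sends a root of $p(z)q(\alpha) - q(z)p(\alpha)$ of order $e$ to a root of $p(z)q(\sigma\alpha) - q(z)p(\sigma\alpha)$ of the same order).

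First I would make the Galois-stability of $\{\gamma_1, \gamma_2\}$ precise, handling the point at infinity by the $\sigma$-equivariance of the substitution $z \mapsto 1/z$ and the definition $e_\infty(\phi) = e_0(1/\phi(1/z))$; alternatively, one can M\"obius-conjugate by an element of $\PGL_2(K)$ to move both critical points into $\Kbar$ when $K$ is infinite, or argue directly that $\infty$ is a critical point iff $\deg w < 2d-2$, a condition insensitive to Galois action. Once $\{\gamma_1,\gamma_2\}$ is known to be a Galois-stable set of size two, the orbit of $\gamma_1$ under $\text{Gal}(\Kbar/K)$ is contained in $\{\gamma_1, \gamma_2\}$, hence has size $1$ or $2$. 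If the orbit of each $\gamma_i$ has size $1$, then $\gamma_1, \gamma_2 \in \PP^1(K)$, so $K(\gamma_1,\gamma_2) = K$, contrary to hypothesis. Therefore at least one $\gamma_i$, say $\gamma_1$, has orbit $\{\gamma_1, \gamma_2\}$; this means $\gamma_2$ is Galois-conjugate to $\gamma_1$, the stabilizer of $\gamma_1$ has index $2$ in $\text{Gal}(\Kbar/K)$, and $[K(\gamma_1) : K] = 2$. Since $\gamma_2$ lies in the Galois orbit of $\gamma_1$ it lies in the Galois closure of $K(\gamma_1)$, which is $K(\gamma_1)$ itself (degree-$2$ extensions in characteristic $\ne 2$, or degree-$2$ separable extensions, are Galois; and the tameness hypothesis $\ell > d \ge 2$ rules out $\ell = 2$, so the extension is separable — in fact if $\ell = 2$ we would have $d < 2$), so $K(\gamma_1, \gamma_2) = K(\gamma_1)$ and $[K(\gamma_1,\gamma_2):K] = 2$.

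The one genuine subtlety — the step I expect to need the most care — is the bookkeeping at $\infty$ and the separability/normality of the degree-$2$ extension: I must ensure that ``Galois-conjugate'' is the right conclusion rather than merely ``$[K(\gamma_1,\gamma_2):K] = 2$ with a possibly inseparable or non-normal extension.'' The tameness hypothesis $\ell = 0$ or $\ell > d \geq 2$ forces $\ell \neq 2$, so every degree-$2$ extension of $K$ is separable and automatically Galois; this is what makes $\gamma_1$ and $\gamma_2$ genuinely Galois-conjugate. The remaining routine point is to double-check that $w = p'q - q'p$ is not identically zero and has exactly the two roots $\gamma_1, \gamma_2$ in $\PKbar$ with the multiplicities dictated by Proposition \ref{critical numerator} and Corollary \ref{cor: RH}; this is immediate since $\sum_i (e_{\gamma_i}(\phi) - 1) = 2d-2$ with two terms forces each $e_{\gamma_i}(\phi) = d$, pinning down $w$ up to a scalar as $c(z-\gamma_1)^{d-1}(z-\gamma_2)^{d-1}$ (suitably interpreted if one of the $\gamma_i$ is $\infty$), whence its Galois-stable root set is exactly $\{\gamma_1,\gamma_2\}$.
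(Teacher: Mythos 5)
Your approach is genuinely different from the paper's. Both start from Riemann--Hurwitz (Corollary \ref{cor: RH}) forcing $e_{\gamma_1}(\phi)=e_{\gamma_2}(\phi)=d$, but the paper then factors the Wronskian $w=p'q-q'p$ into irreducibles over $K$ and reads off $[K(\gamma_1,\gamma_2):K]$ from the shape of the factorization (two linear factors, or one quadratic), while you observe that $\{\gamma_1,\gamma_2\}$ is $\Gal(\Kbar/K)$-stable and analyze orbit sizes directly. Your route is arguably cleaner conceptually and handles the point at infinity in a more uniform way.

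However, there is a genuine gap in your separability argument, which you rightly flagged as the delicate point but then resolved incorrectly. From ``the orbit of $\gamma_1$ under $\Gal(\Kbar/K)$ has size two'' you conclude $[K(\gamma_1):K]=2$, but orbit size under $\mathrm{Aut}(\Kbar/K)\cong\Gal(K^{\mathrm{sep}}/K)$ only gives the \emph{separable} degree $[K(\gamma_1):K]_s=2$; the full degree could be $2\ell^k$ if $\gamma_1$ is inseparable over $K$. Likewise ``orbit of $\gamma_i$ has size one $\Rightarrow \gamma_i\in K$'' fails if $\gamma_i$ is purely inseparable over $K$. Your proposed fix --- that $\ell\neq 2$ makes degree-$2$ extensions separable --- is circular: it presupposes the degree is already known to be $2$, which is exactly what the orbit count was supposed to establish. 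The correct fix requires the full strength of $\ell>d$, not merely $\ell\neq 2$: the minimal polynomial $m$ of a finite critical point $\gamma_i$ divides $w$, in which $\gamma_i$ occurs with multiplicity exactly $d-1$ by Proposition \ref{critical numerator}; if $m$ were inseparable, $\gamma_i$ would be a root of $m$ (hence of $w$) of multiplicity at least $\ell$, forcing $\ell\leq d-1<\ell$, a contradiction. Once separability of the $\gamma_i$ is established this way, the rest of your orbit argument goes through, and this computation is essentially what underlies the paper's appeal to ``$\ell\nmid 2d-2$'' when it rules out repeated roots in the irreducible factors of $w$.
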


\begin{proof}   
    We begin by noting the standard fact that if $\ell \nmid n$, then any irreducible polynomial in $K[z]$ of degree $n$ must have distinct roots in $\overline{K}$.  
    
    Assume that $\infty \in \{\gamma_1, \gamma_2\}$, and without loss of generality say $\gamma_1 = \infty$. By Corollary \ref{cor: RH} we have $e_{\gamma_1}(\phi) = e_{\gamma_2}(\phi) = d-1$. Proposition \ref{critical numerator} then shows that $\gamma_2$ is the only root of the degree-$(d-1)$ polynomial $p'q - q'p \in K[z]$. But $\ell \nmid d-1$, and so any irreducible factor in $K[z]$ of $p'q - q'p$ can have only the single root $\gamma_2$, and hence has degree 1. It follows that $z- \gamma_2 \in K[z]$ and thus $K(\gamma_1, \gamma_2) = K$.

    We thus have reduced to the case $\{\gamma_1, \gamma_2\} \subset \Kbar$. Applying Corollary \ref{cor: RH} and Proposition \ref{critical numerator} shows that the degree-$(2d-2)$ polynomial $p'q - q'p$ has only the roots $\gamma_1$ and $\gamma_2$. Because $\ell \nmid 2d-2$, no irreducible factor in $K[z]$ of $p'q - q'p$ can have repeated roots. Hence either $p'q - q'p$ has two irreducible factors of degree one (with roots $\gamma_1$ and $\gamma_2$, respectively) or one irreducible factor of degree two (with roots $\gamma_1$ and $\gamma_2$). In the former case $K(\gamma_1, \gamma_2) = K$, while in the latter $[K(\gamma_1, \gamma_2) : K] = 2$, $K(\gamma_1, \gamma_2)$ is Galois over $K$, and $\gamma_1$ and $\gamma_2$ are Galois-conjugate over $K$.
\end{proof}

In the case where $d = 2$ we have the following description of maps for which $K(\gamma_1, \gamma_2) \neq K$, up to conjugacy by an element of $\PGL_2(K)$. For $\mu \in \PGL_2(\overline{K})$, we denote $\mu \circ \phi \circ \mu^{-1}$ by $\phi^\mu$.

\begin{proposition} \label{quadratic}
    Let $K$ be a field of characteristic $\neq 2$, and let $\phi \in K(z)$ be bicritical of degree $2$ with critical points $\gamma_1, \gamma_2 \in \PKbar$. Assume that $K$ has at least seven elements. If $K(\gamma_1, \gamma_2) \neq K$, then $\phi$ is conjugate over $K$ to a map of the form
    \begin{equation} \label{quadratic normal form}
        \frac{z^2 + az + r}{z^2 + bz + r},   
    \end{equation}
    where $a, b, r \in K$ and $K(\gamma_1, \gamma_2) = K(\sqrt{r})$. 
\end{proposition}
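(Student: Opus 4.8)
The plan is to invoke Theorem~\ref{critfield} to reduce to the case where the critical points are $\pm\delta$ with $\delta^2\in K$, and then to read the normal form off a defining pair of polynomials for $\phi$. By Theorem~\ref{critfield}, $L:=K(\gamma_1,\gamma_2)$ is a quadratic extension of $K$ and $\gamma_1,\gamma_2$ are Galois-conjugate over $K$; since $\infty\in\PP^1(K)$, this forces both critical points to be finite and to lie outside $K$. As $\charK\neq2$ we may write $L=K(\sqrt{r_0})$ with $r_0\in K^\times$ a non-square and $\gamma_i=u\pm v\sqrt{r_0}$, $u\in K$, $v\in K^\times$; conjugating by $z\mapsto(z-u)/v\in\PGL_2(K)$ I may assume $\gamma_1=\delta$, $\gamma_2=-\delta$ with $\delta^2=r_0$. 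Write $\phi=p/q$ with $p,q\in K[z]$ coprime and $\max\{\deg p,\deg q\}=2$. Since $\infty$ is not a critical point, $e_\infty(\phi)=1$, so Corollary~\ref{cor: RH} and Proposition~\ref{critical numerator} give $\deg(p'q-pq')=2$ with zero set $\{\delta,-\delta\}$, whence $p'q-pq'=c(z^2-r_0)$ for some $c\in K^\times$.

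Next I treat the main case $\deg p=\deg q=2$, writing $p=p_2z^2+p_1z+p_0$ and $q=q_2z^2+q_1z+q_0$ with $p_2,q_2\in K^\times$. Expanding $p'q-pq'$ and comparing with $c(z^2-r_0)$: the coefficient of $z$ vanishes, which gives $p_2q_0=p_0q_2$ (here $\charK\neq2$), so $p_0/p_2=q_0/q_2=:s$; substituting into the constant term and using $c\neq0$ then forces $s=r_0$. Hence $\phi=\dfrac{p_2\,(z^2+(p_1/p_2)z+r_0)}{q_2\,(z^2+(q_1/q_2)z+r_0)}$, and conjugating by $z\mapsto(q_2/p_2)z\in\PGL_2(K)$ clears the factor $p_2/q_2$ and yields a map of the form \eqref{quadratic normal form} with $r=r_0\,(q_2/p_2)^2$. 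As $(q_2/p_2)^2\in(K^\times)^2$ we get $K(\sqrt r)=K(\sqrt{r_0})=L=K(\gamma_1,\gamma_2)$; and $a\neq b$ automatically, since otherwise the map would be constant.

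It remains to handle the degenerate case $\{\deg p,\deg q\}\neq\{2,2\}$, equivalently $\phi(\infty)\in\{0,\infty\}$, by first conjugating $\phi$ into the main case. Put $\sigma_1=\gamma_1+\gamma_2$ and $\sigma_2=\gamma_1\gamma_2$, which lie in $K$. For each finite $t\in K$ there is a unique $\mu_t\in\PGL_2(K)$ with $\mu_t^{-1}(\infty)=t$ and $\mu_t(\gamma_1)+\mu_t(\gamma_2)=0$, namely $\mu_t(z)=\dfrac{(2t-\sigma_1)z+(2\sigma_2-t\sigma_1)}{z-t}$, and it is nondegenerate because its determinant equals $-2(t-\gamma_1)(t-\gamma_2)\neq0$. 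For such $t$ the critical points of $\phi^{\mu_t}$ sum to $0$, so they are $\pm\delta'$ with $(\delta')^2\in K$ and $K(\delta')=L$, while $\phi^{\mu_t}(\infty)=\mu_t(\phi(t))$ lies in $\{0,\infty\}$ only when $\phi(t)=t$ or $\phi(t)=\mu_t^{-1}(0)$. Each of these is a polynomial condition on $t$ of degree at most $3$ which is not an identity (since $\deg\phi=2$), so each excludes at most three values of $t$, leaving all but at most six values of $t\in K$ admissible. Since $|K|\geq7$, an admissible $t$ exists, and then $\phi^{\mu_t}$ is in the main case (with $r_0$ replaced by $(\delta')^2$), completing the proof.

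The coefficient comparison in the main case is routine. The part requiring real care is the last step: the bookkeeping for the degenerate degrees, and confirming that the one-parameter family $\{\mu_t\}_{t\in K}$ contains a member carrying $\phi$ into the main case. This is exactly where the hypothesis $|K|\geq7$ is used, and one must be a little careful that the special values of $t$ (those with $\phi(t)=\infty$, or for which the numerator of $\mu_t$ is constant) are correctly subsumed in the degree-$3$ count.
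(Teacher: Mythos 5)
Your proof is correct and takes essentially the same route as the paper: reduce via Theorem \ref{critfield} to Galois-conjugate critical points of the form $\pm\sqrt{r_0}$, use a one-parameter family of $K$-rational conjugations together with the count of at most $3+3$ bad parameter values (which is exactly where the paper also uses $\#K \geq 7$) to arrange $\phi(\infty)\notin\{0,\infty\}$, and then compare the numerator of $\phi'$ with $c(z^2-r_0)$ to force the constant terms of numerator and denominator to be equal. The only cosmetic differences are that you do the coefficient comparison with general leading coefficients and scale afterward (the paper first scales so that $\phi(\infty)=1$), and your maps $\mu_t$ only normalize the sum of the critical points to zero rather than preserving $\{\pm\sqrt{r_0}\}$; the asserted uniqueness of $\mu_t$ is unnecessary (and not quite accurate) but harmless.
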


\begin{proof}
By Theorem \ref{critfield}, there is a non-square $s \in K$ and $c_0, c_1 \in K$ with $c_1 \neq 0$ and $\{\gamma_1, \gamma_2\} = \{c_0 \pm c_1 \sqrt{s}\}$. Conjugation of $\phi$ by $\mu(z) = (z-c_0)/c_1$ gives a map $\phi_1$ with $\{\gamma_1, \gamma_2\} = \{\pm \sqrt{s}\}$. 

We now argue that $\phi_1$ is conjugate over $K$ to a map $\phi_2$ whose critical points are $\pm \sqrt{s}$ and with $\phi_2(\infty) \not\in \{0, \infty\}$. One calculates that $\mu \in \PGL_2(K)$ preserves $\{\pm \sqrt{s}\}$ if and only if $\mu(z) = z$ or 
\begin{equation} \label{muform}
\mu(z) = \frac{c_2z \pm s}{z \pm c_2},
\end{equation}
where $c_2 \in K$ and the $\pm$ in the numerator and denominator have the same sign. If $\phi_1(\infty) \not\in \{0, \infty\}$, we take $\phi_2 = \phi_1$. Otherwise, conjugating $\phi_1$ by $\mu$ of the form \eqref{muform} (using the minus signs) gives $\phi_2$ with critical points $\pm \sqrt{s}$ and 
$$\phi_2(\infty) = \phi_1^\mu(\mu(c_2)) = \mu(\phi_1(c_2)).$$ 
Because $\deg \phi_1 = 2$, $\phi_1(c_2) = c_2$ has at most three solutions in $K$, and the same holds for $\phi_1(c_2) = s/c_2$ (where we take $s/c_2 = \infty$ if $c_2 = 0$). Because $\#K \geq 7$ we may take $c_2 \in K$ with $\mu(\phi_1(c_2)) \not\in \{0, \infty\}$.

Finally, conjugate $\phi_2$ by the scaling $z \mapsto c_3z$ where $c_3 = 1/\phi_2(\infty)$. We then have $\phi_3(\infty) = 1$ and the critical points of $\phi_3$ are $\pm c_3 \sqrt{s}$. Let $r = c_3^2s$ and note that for $\phi_3$ we have $K(\gamma_1, \gamma_2) = K(\sqrt{r})$. Therefore 
\begin{equation} \label{three}
\phi_3(z) = \frac{z^2 + a_1z + a_2}{z^2 + b_1z + b_2} \in K(z),
\end{equation}
and the numerator of $\phi_3'(z)$ is $c_4(z^2 - r)$ for some $c_4 \in K$. Differentiating \eqref{three}, setting the numerator equal to $c_4(z^2 - r)$ and equating coefficients yields $a_2 = b_2 = r$, giving the form in \eqref{quadratic normal form}.
\end{proof}

\begin{remark}
    When $K$ has fewer than seven elements one encounters examples such as 
    $$
    \phi(z) = \frac{z^2 - z + 2}{2z} \in \mathbb{F}_5(z).
    $$
    This map has critical points $\pm \sqrt{2}$ and fixes 
    $\infty$. Moreover, every $y \in \PP^1(\mathbb{F}_5)$ is a solution to either $\phi(y) = y$ or $ \phi(y) = 2/y$. Hence if $\mu$ is as in \eqref{muform}, we have that $\phi^{\mu}(\infty) \in \{0, \infty\}$ for every choice of $c_2$.
\end{remark}


A consequence of Theorem \ref{critfield} is that certain bicritical maps $\phi \in K(z)$ must have critical points defined over $K$. A \textit{critical orbit relation} for $\phi \in K(z)$ is any equality of the form $\phi^n(\gamma_i) = \phi^m(\gamma_j)$ where $\gamma_i$ and $\gamma_j$ are (not necessarily distinct) critical points for $\phi$ and $n, m \geq 0$. Recall that $\alpha \in \Kbar$ is pre-periodic under $\phi$ if there are $s > t \geq 0$ such that 
$\phi^s(\alpha) = \phi^t(\alpha)$. When $s$ and $t$ are both minimal, we call $t$ the pre-period of $\alpha$ and $s - t$ the period of $\alpha$.

\begin{corollary} \label{critspec}
Let $K$ be a field of characteristic $\ell \geq 0$, and let $\phi \in K(z)$ be bicritical of degree $d \geq 2$ with critical points $\gamma_1, \gamma_2 \in \PKbar$. Assume that $\ell = 0$ or $\ell > d$. If $K(\gamma_1, \gamma_2) \neq K$, then one of the following holds:
\begin{enumerate}
    \item $\phi$ has no critical orbit relations;
    \item $\gamma_1$ and $\gamma_2$ are both pre-periodic with the same period and pre-period; \label{casetwo}
    \item neither $\gamma_1$ nor $\gamma_2$ is pre-periodic and there is some $n \geq 2$ with $\phi^n(\gamma_1) = \phi^n(\gamma_2)$. \label{casethree}
\end{enumerate}   
\end{corollary}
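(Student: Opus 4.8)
The plan is to exploit the key dichotomy from Theorem~\ref{critfield}: since $K(\gamma_1,\gamma_2) \neq K$, the critical points $\gamma_1$ and $\gamma_2$ are swapped by the nontrivial element $\sigma$ of $\mathrm{Gal}(K(\gamma_1,\gamma_2)/K)$. Because $\phi$ is defined over $K$, it commutes with the Galois action, so $\sigma$ sends any critical orbit relation to another critical orbit relation. Suppose we are not in case (1), so at least one critical orbit relation $\phi^n(\gamma_i) = \phi^m(\gamma_j)$ holds. Applying $\sigma$ and using that $\sigma$ interchanges $\gamma_1$ and $\gamma_2$ yields a companion relation with the roles swapped. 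The heart of the argument is a careful case analysis on whether $i = j$ or $i \neq j$ in the original relation, and on the resulting forced symmetry between the two critical orbits.

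First I would handle the case where $\phi^n(\gamma_1) = \phi^m(\gamma_1)$ for some $n > m \geq 0$, i.e. $\gamma_1$ is pre-periodic. Applying $\sigma$ gives $\phi^n(\gamma_2) = \phi^m(\gamma_2)$, so $\gamma_2$ is pre-periodic as well; moreover, the minimality of pre-period and period is a condition stable under $\sigma$ (as $\sigma$ is a field automorphism fixing $\phi$), so $\gamma_1$ and $\gamma_2$ have the same pre-period $t$ and same period $p$. I should also rule out that in this situation one could additionally have a ``crossing'' relation $\phi^a(\gamma_1) = \phi^b(\gamma_2)$ with $a,b$ large; if such held then applying it and its $\sigma$-image would force the two (now equal) periodic cycles to coincide, but that is consistent with case~(2), so no contradiction arises — the point is just that case~(2) correctly describes the outcome. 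This puts us in case~(2). The symmetric argument applies if $\gamma_2$ is pre-periodic.

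The remaining case is that neither $\gamma_1$ nor $\gamma_2$ is pre-periodic, yet some critical orbit relation holds; necessarily it is a crossing relation $\phi^n(\gamma_1) = \phi^m(\gamma_2)$ with, say, $n \geq m$. Applying $\sigma$ gives $\phi^n(\gamma_2) = \phi^m(\gamma_1)$. Composing: apply $\phi^m$ to the first and $\phi^n$ to the second appropriately, or rather chain them to get $\phi^{n+m}(\gamma_1) = \phi^{2m}(\gamma_2) = \phi^{n+m}(\gamma_2)$ after feeding one relation into the other — more precisely, from $\phi^n(\gamma_1)=\phi^m(\gamma_2)$ apply $\phi^{n-m}$ to get $\phi^{2n-m}(\gamma_1) = \phi^n(\gamma_2) = \phi^m(\gamma_1)$ using the $\sigma$-image, which would make $\gamma_1$ pre-periodic unless $2n - m = m$, i.e. $n = m$. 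Hence $n = m$, and then $\phi^n(\gamma_1) = \phi^n(\gamma_2)$ with $n \geq 1$; one checks $n \geq 2$ since $n = 1$ would give $\phi(\gamma_1) = \phi(\gamma_2)$, and then applying $\phi$-preimages and the ramification structure (both critical points being totally ramified of index $d$ by Corollary~\ref{cor: RH}) forces $\gamma_1 = \gamma_2$, a contradiction — actually I should double-check whether $n=1$ is genuinely impossible or whether it should be folded into case~(3) by allowing $n \geq 1$; the cleanest route is to note $\phi(\gamma_1)=\phi(\gamma_2)$ with $\phi$ totally ramified at each $\gamma_i$ means $\gamma_1,\gamma_2$ lie in the same fiber which is a single point with multiplicity $d$, forcing $\gamma_1 = \gamma_2$. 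This lands us in case~(3).

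The main obstacle I anticipate is bookkeeping in the crossing-relation case: ensuring that every way of chaining the original relation with its $\sigma$-conjugate either produces pre-periodicity (excluded by hypothesis in that case) or collapses to $n = m$, without missing a configuration. A secondary subtlety is confirming the exclusion of $n = 1$ in case~(3), which relies on the total ramification of $\phi$ at the critical points — a consequence of bicriticality together with the tame Riemann--Hurwitz formula (Corollary~\ref{cor: RH}), valid under the hypothesis $\ell = 0$ or $\ell > d$.
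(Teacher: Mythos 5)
Your proposal is correct and follows essentially the same route as the paper: it uses the Galois automorphism $\sigma$ swapping $\gamma_1,\gamma_2$ furnished by Theorem~\ref{critfield}, transports any critical orbit relation by $\sigma$ (preserving minimality of period and pre-period in the same-orbit case), performs the same substitution $\phi^{2n-m}(\gamma_1)=\phi^m(\gamma_1)$ to force $n=m$ in the crossing case, and rules out $n=1$ via total ramification from Corollary~\ref{cor: RH}. No gaps beyond the trivial observation that $n=m=0$ is excluded by the distinctness of the critical points, which the paper also leaves implicit.
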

\begin{remark} Under the hypotheses of Corollary \ref{critspec}, it follows from Theorem \ref{critfield}  that $\{\phi^n(\gamma_1), \phi^n(\gamma_2)\}$ is a $\Gal(K(\gamma_1, \gamma_2)/K)$-orbit for each $n \geq 1$. Thus in case \eqref{casethree} of Corollary \ref{critspec} we have that the common value $\phi^n(\gamma_1) = \phi^n(\gamma_2)$ lies in $K$ (cf. \cite[Section 3.2]{benedettocubic}). This does indeed occur, e.g. for $K = \Q$ and $\phi(z) = (z^2 + 2)/(z^2 + 2z + 2)$, which has $\{\gamma_1, \gamma_2\} = \{\pm \sqrt{2}\}$ and $\phi^2(\gamma_1) = \phi^2(\gamma_2) = 2/3$. When $d = 2$ and $\phi$ satisifes \eqref{casethree} of Corollary \ref{critspec}, the Galois theory of iterated preimages is studied in \cite{benedettocollision}.
\end{remark}

\begin{proof}
Let $\phi^n(\gamma_i) = \phi^m(\gamma_j)$ be a critical orbit relation for $\phi$. By Theorem \ref{critfield} there is $\sigma \in \Gal(K(\gamma_1, \gamma_2)/K)$ with $\sigma(\gamma_1) = \gamma_2$. 
Suppose that the critical relation describes a single critical orbit (i.e. $i = j$), and without loss say that it takes the form 
\begin{equation} \label{orbrelation}
\phi^s(\gamma_1) = \phi^t(\gamma_1)
\end{equation}
for $s > t \geq 0$ with $s$ and $t$ minimal. Applying $\sigma$ to both sides gives $\phi^s(\gamma_1) = \phi^t(\gamma_1)$, 
and $s$ and $t$ must again be minimal in this context, for otherwise we could apply $\sigma$ again and contradict the minimality of $s$ and $t$ in \eqref{orbrelation}.

Suppose now that the critical orbit relation is of the form $\phi^n(\gamma_1) = \phi^m(\gamma_2)$ for $n \geq m \geq 0$ with both $n$ and $m$ minimal. We may apply $\sigma$ to obtain $\phi^n(\gamma_2) = \phi^m(\gamma_1)$. Substitution then yields $\phi^{2n-m}(\gamma_1) = \phi^m(\gamma_1)$, and if $n > m$ then we are in the case of the previous paragraph. If $n = m$ and $\phi^n(\gamma_1)$ is pre-periodic, then we are again in the case of the previous paragraph. If $n = m$ and $\phi^n(\gamma_1)$ is not pre-periodic, then neither $\gamma_1$ nor $\gamma_2$ is pre-periodic. Note that $n=m$ ensures $n \geq 2$, as $\phi(\gamma_1) = \phi(\gamma_2)$ is ruled out by Corollary \ref{cor: RH}.
\end{proof}

We now give a normal form for any bicritical rational function. It is cleaner than, for instance, the form in \eqref{quadratic normal form}, but comes at the price of allowing conjugacy over $K(\gamma_1, \gamma_2)$. This form is nearly unique up to conjugacy in $\PGL_2(\Kbar)$. It is a generalization of the normal form used in \cite[Appendix A]{BJKL} for quadratic rational functions, which drew on work of Pink \cite{pink1}. Milnor \cite[Section 1]{milnorbicrit} gives a related normal form and discusses the moduli space of bicritical rational maps of degree $d$ over $\mathbb{C}$. 

\begin{theorem} \label{normalform}
Let $K$ be a field of characteristic $\ell \geq 0$, and let $\phi \in K(z)$ be bicritical of degree $d \geq 2$ with critical points $\gamma_1, \gamma_2 \in \PKbar$. Assume that $\ell = 0$ or $\ell > d$. 
\begin{enumerate}
\item If $\phi(\gamma_1) = \gamma_1$ and $\phi(\gamma_2) = \gamma_2$, then $\phi$ is conjugate over $K(\gamma_1, \gamma_2)$ to $cz^d$ for some $c \in K \setminus \{0\}$.
\item If $\phi(\gamma_1) = \gamma_2$ and $\phi(\gamma_2) = \gamma_1$, then $\phi$ is conjugate over $K(\gamma_1, \gamma_2)$ to $c/z^d$ for some $c \in K \setminus \{0\}$.
\item \label{typical} Otherwise, $\phi$ is conjugate over $K(\gamma_1, \gamma_2)$ to
\begin{equation} \label{mainform}
\frac{z^d + a}{z^d + b}
\end{equation}
for some $a, b \in K(\gamma_1, \gamma_2)$ with $a \neq b$.
\end{enumerate}
If two maps $(z^d + a)/(z^d + b)$ and $(z^d + a_1)/(z^d + b_1)$ in $\Kbar(z)$ are conjugate over $\Kbar$, then the conjugacy is either by $\mu(z) = z$ or $\mu(z) = (a/b)z$. In the latter case we have $a_1 = a^d/b^{d+1}$ and $b_1 = a^{d-1}/b^d$.
\end{theorem}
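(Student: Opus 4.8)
The plan is to prove the three parts by conjugating the two critical points to $0$ and $\infty$, using the hypothesis $\ell = 0$ or $\ell > d$ to guarantee (via Corollary \ref{cor: RH}) that $e_{\gamma_1}(\phi) = e_{\gamma_2}(\phi) = d$ — that is, $\phi$ is totally ramified at each critical point. Choose $\nu \in \PGL_2(\overline{K})$, and in fact in $\PGL_2(K(\gamma_1,\gamma_2))$, with $\nu(\gamma_1) = 0$ and $\nu(\gamma_2) = \infty$ (if one of the $\gamma_i$ is already $\infty$, only a translation or affine map is needed, still defined over $K(\gamma_1,\gamma_2)$ since $K(\infty) = K$). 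Set $\psi = \phi^\nu$. Since ramification indices are conjugation-invariant, $\psi$ is totally ramified at $0$ and at $\infty$, and these are its only critical points. A rational map of degree $d$ totally ramified at $0$ and $\infty$ must have $0$ and $\infty$ each a single preimage-point of their respective images with multiplicity $d$; writing $\psi = p/q$ in lowest terms, total ramification at $0$ forces $p(z)q(0) - q(z)p(0)$ to be $(\text{const})\cdot z^d$, and total ramification at $\infty$ (applied to $1/\psi(1/z)$) forces a matching degree condition. Carrying this through shows $\psi(z) = (uz^d + v)/(wz^d + t)$ for scalars with $ut - vw \neq 0$: no intermediate-degree terms survive. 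I would extract this structural fact as the first key step.

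Next, in the three cases: if $\psi$ fixes both $0$ and $\infty$, then $v = 0$ (so $\psi(0)=0$) and $w = 0$ (so $\psi(\infty)=\infty$), giving $\psi(z) = (u/t)z^d = cz^d$; scaling by a further $z \mapsto \lambda z$ keeps $c \in K\setminus\{0\}$? — here I must be slightly careful: the constant is a priori in $K(\gamma_1,\gamma_2)$, and descent to $K$ needs the argument that $\phi$ itself is defined over $K$ together with the Galois action permuting $\{\gamma_1,\gamma_2\}$, as in the remark after Corollary \ref{critspec}; I would invoke that descent argument here. If $\psi$ swaps $0$ and $\infty$, then $\psi(0) = \infty$ and $\psi(\infty) = 0$ force $u = 0$ and $t = 0$, giving $\psi(z) = v/(wz^d) = c/z^d$. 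Otherwise at least one of $\psi(0), \psi(\infty)$ lies outside $\{0,\infty\}$; composing with a scaling $z \mapsto \lambda z$ (which preserves the critical points $0, \infty$) and, if needed, the inversion $z \mapsto 1/z$ (which swaps them, harmlessly), I can normalize so that the denominator's leading and constant coefficients are both nonzero and, after scaling, equal to $1$, landing on $(z^d+a)/(z^d+b)$; the nondegeneracy $ut-vw\neq 0$ becomes exactly $a \neq b$.

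For the uniqueness statement, suppose $\mu \in \PGL_2(\overline{K})$ conjugates $\phi_1(z) = (z^d+a)/(z^d+b)$ to $\phi_2(z) = (z^d+a_1)/(z^d+b_1)$. Both maps are totally ramified exactly at $0$ and $\infty$ (their critical points), so $\mu$ must map the critical set $\{0,\infty\}$ of $\phi_1$ to that of $\phi_2$, hence $\mu(z) = \lambda z$ or $\mu(z) = \lambda/z$ for some $\lambda \in \overline{K}^\times$. One then checks directly that $\phi_1$ has the non-critical fixed point structure / the special values $\phi_1(0) = a/b$ and $\phi_1(\infty) = 1$, and similarly for $\phi_2$; since $\mu$ must send the ordered pair $(\phi_1(0),\phi_1(\infty)) = (a/b,1)$ to the corresponding pair for $\phi_2$ (or its swap), this pins down $\mu$. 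Tracking which critical point goes to which: if $\mu$ fixes $0$ and $\infty$ (the case $\mu(z)=\lambda z$), conjugating and equating the two rational functions coefficient-by-coefficient forces $\lambda^{d-1} = a/b$ — i.e. there is essentially one nontrivial choice $\mu(z) = (a/b)z$ — whereupon a short substitution gives $a_1 = a^d/b^{d+1}$ and $b_1 = a^{d-1}/b^d$; the case $\mu(z) = \lambda/z$ either returns $\phi_1$ to itself or reduces to the previous case after composing with $z \mapsto 1/z$, contributing no genuinely new conjugate. I expect the main obstacle to be bookkeeping in this last step — correctly enumerating the finitely many $\mu$ preserving $\{0,\infty\}$, ruling out spurious solutions of the coefficient equations, and confirming that exactly one nontrivial conjugate of the stated form arises rather than zero or two; the structural input (total ramification pins down the shape and the critical set) is the conceptual heart and is comparatively quick.
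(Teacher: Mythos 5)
Your proof of the existence of the three normal forms follows essentially the same route as the paper's: conjugate the critical points to $0$ and $\infty$ over $K(\gamma_1,\gamma_2)$, use Corollary \ref{cor: RH} to get total ramification there, deduce that the map has the shape $(uz^d + v)/(wz^d + t)$, and then split into the three cases according to $\phi(\{0,\infty\})$. That part is sound (modulo the descent-to-$K$ point you flagged for cases (1) and (2), which the paper's own proof also glosses over in the general case).

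Your uniqueness argument, however, has the two cases exactly reversed, and the scaling case is analyzed incorrectly. If $\mu(z) = \lambda z$ and $\phi_1^\mu(z) = \lambda \phi_1(z/\lambda) = \lambda(z^d + a\lambda^d)/(z^d + b\lambda^d)$ is to equal $(z^d + a_1)/(z^d + b_1)$, then equating leading coefficients in numerator and denominator forces $\lambda = 1$ outright; it does \emph{not} produce a condition $\lambda^{d-1} = a/b$. Consequently the scaling case contributes only the identity, and the ``one nontrivial conjugacy'' is \emph{not} a scaling. The genuine nontrivial conjugacy comes from the inversion $\mu(z) = c/z$: computing $\phi_1^\mu(z) = c(bz^d + c^d)/(az^d + c^d)$ and demanding the leading coefficients match forces $c = a/b$, and substituting $c = a/b$ then yields $a_1 = a^d/b^{d+1}$ and $b_1 = a^{d-1}/b^d$. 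So your claim that ``$\mu(z)=\lambda/z$ \dots contributes no genuinely new conjugate'' is precisely backwards. (You were likely misled by a typo in the theorem statement itself, which writes $\mu(z) = (a/b)z$ where the proof in the paper clearly establishes $\mu(z) = (a/b)/z$; a quick sanity check is that $\mu$ fixing $\{0,\infty\}$ pointwise must preserve $\phi(\infty) = 1$, forcing $\mu(1)=1$ and hence $\lambda=1$, whereas the swap $\mu(z)=c/z$ must carry $\phi(0) = a/b$ to $1$, forcing $c = a/b$.) You should redo the coefficient comparison in both cases; the conceptual skeleton --- any conjugacy between two such maps preserves $\{0,\infty\}$, hence is a scaling or an inversion --- is correct and matches the paper.
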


\begin{remark}
    The family of degree-$d$ polynomials is somewhat disguised in the normal form \eqref{normalform}. It is the family with $a = 0$, which has the totally ramified fixed point at $0$ instead of the usual $\infty$.
\end{remark}

\begin{remark}
The hypothesis $\ell = 0$ or $\ell > d$ in Theorem \ref{normalform} cannot be weakened to $\ell \nmid d$. For example, if $0 < \ell < d$ then $\phi(z) = z^d + z^\ell$ is bicritical with $e_0(\phi) = \ell$ and $e_\infty(\phi) = d$. Hence $\phi$ is not conjugate over $\Kbar$ to any of the maps given in Theorem \ref{normalform}. For another example, if $0 < \ell < d-1$ then 
\begin{equation} \label{univalue}
\phi(z) = \frac{z^\ell}{z^d + 1}
\end{equation}
is bicritical with $e_0(\phi) = \ell$ and $e_\infty(\phi) = d-\ell$, and hence not conjugate over $\Kbar$ to any of the maps given in Theorem \ref{normalform}. Note that maps of the form \eqref{univalue} have only one critical value, which is impossible in the cases $\ell = 0$ or $\ell > d$ by Corollary \ref{cor: RH}.
\end{remark}



\begin{proof}
    Let $\phi(z) = p(z)/q(z)$, where $p,q \in K[z]$ have no common root in $\Kbar$. We first handle the case where $\{\gamma_1, \gamma_2\} = \{0, \infty\}$. Let $v_i = \phi(\gamma_i)$ for $i \in \{1, 2\}$, and assume that neither $v_1$ nor $v_2$ is $\infty$. By Corollary \ref{cor: RH} we have $e_0(\phi) = e_\infty(\phi) = d$ (which in particular implies $v_1 \neq v_2$). It follows that $p(z) - v_1q(z) \in K[z]$ is a constant times $z^d$, while $p(z) - v_2q(z) \in K[z]$ has no roots in $\Kbar$, and so is constant. Solving the linear equations for $p(z)$ and $q(z)$ gives 
    \begin{equation} \label{firstform}
    \phi(z) = \frac{c_1z^d + a}{c_2z^d + b}
    \end{equation}
    for $c_1, c_2, a, b \in K$ with $bc_1 \neq ac_2$ (because $\phi$ has degree $d$). The argument in the case that $v_1 = \infty$ or $v_2 = \infty$ is similar. 
    
    Suppose that $\{v_1, v_2\} = \{0, \infty\}$. Then $\phi$ either fixes both $0$ and $\infty$ or interchanges them. The former occurs if and only if $a = c_2 = 0$, which implies $\phi(z) = c_3z^d$ for $c_3 = c_1/b\in K \setminus \{0\}$. 
    The latter occurs if and only if $b = c_1 = 0$, giving $\phi(z) = c_3z^{-d}$ for $c_3 = a/c_2 \in K \setminus \{0\}$. 
    
    When $\{v_1, v_2\} \neq \{0, \infty\}$, replace $\phi(z)$ by $1/\phi(1/z)$ if necessary so that  $\phi(\infty) \not\in \{0, \infty\}$. Thus $c_1c_2 \neq 0$ in \eqref{firstform}. Divide numerator and denominator by $c_2$ to get $\phi(z) = (c_3z^d + a')/(z^d + b')$, where $c_3 = c_1/c_2$, $a' = a/c_2$ and $b' = b/c_2$ are all in $K$. Now conjugate by $\mu(z) = (1/c_3)z$ to produce the map $(z^d + a'')/(z^d + b'')$ where $a'' = a'/c_3^{d+1}$ and $b'' = b'/c_3^d$. Note that conjugation by $\mu$ ensures that the conjugated map sends $\infty$ to $1$. Because $bc_1 \neq ac_2$, we have $a'' \neq b''$.

In the general case, let $\mu \in \PGL_2(\Kbar)$ satisfy $\mu(\gamma_1) = 0$ and $\mu(\gamma_2) = \infty$, and observe that $\phi$ can be taken to be in $\PGL_2(K(\gamma_1, \gamma_2))$. The map $\mu \circ \phi \circ \mu^{-1}$ is then bicritical with critical points $\{0, \infty\}$, and we have reduced to the previous case. 
    
    To prove the last assertion of the Theorem, let $\phi(z) = (z^d + a)/(z^d + b)$ and $\phi_1(z) = (z^d + a_1)/(z^d + b_1)$ be conjugate over $\Kbar$. Both maps are bicritical with critical points $0$ and $\infty$, so any $\Kbar$-conjugacy must preserve $\{0, \infty\}$. Hence the two maps must be conjugate by $\mu(z) = cz$ or $\mu(z) = c/z$ for some $c \in \Kbar$. It is then straightforward to check that in the former case we must have $c = 1$ and in the latter case we must have $c = (a/b)$. 
\end{proof}





Denote by $\textbf{U}_d$ the group of $d$th roots of unity in $\overline{K}$, and by $C_d$ the cyclic group of order $d$. We write $[C_d]^\infty$ for the infinite iterated wreath product of $C_d$. Recall that a labeling of the tree $T^d_\infty$ given by $\bigsqcup_{n \geq 0} \phi^{-n}(\alpha)$ is a choice of isomorphism $X^* \to T^d_\infty$, where $X^*$ is the abstract complete $d$-ary rooted tree whose vertices are words in $\{0, 1, \ldots, d-1\}$. There is a natural inclusion $[C_d]^\infty \hookrightarrow \Aut(X^*)$ as given in \cite[Section 2]{adamshyde}. 

\begin{corollary} \label{wreath}
 With hypotheses as in Theorem \ref{normalform}, let $K_0 = K(\gamma_1, \gamma_2)$, and assume that $\alpha \in K_0$ is not post-critical. For $n \geq 1$, let $K_n(\alpha) = K_0(\phi^{-n}(\alpha))$ and $G_n(\alpha) = \Gal(K_n(\alpha)/K_0)$. Let $K_\infty(\alpha) = \bigcup_{n \geq 1} K_n(\alpha)$, and $G_\infty(\alpha) = \Gal(K_\infty(\alpha)/K_0))$.
 
 Then 
 $\textbf{U}_d \subseteq K_1( \alpha)$, $\textbf{U}_d$ acts by multiplication on $\phi^{-n}(\alpha)$ for each $n \geq 1$, and the orbits of this action are the fibers of the projection $\phi^{-n}(\alpha) \to \phi^{-(n-1)}(\alpha)$ given by applying $\phi$. 
 In particular, for each $n \geq 2$ the quotient $G_n(\alpha)/G_{n-1}(\alpha)$ embeds into $(C_d)^{d^{n-1}}$. 
 
 If we further assume that $\textbf{U}_d \subseteq K_0$, then there is a labeling of $T^d_\infty$ so that 
 \begin{equation} \label{injection}
      G_\infty(\phi, \alpha) \hookrightarrow [C_d]^\infty \subseteq \Aut(X^*).
 \end{equation}
\end{corollary}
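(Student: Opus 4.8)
The plan is to work directly with the normal form from Theorem \ref{normalform}. Since $\alpha$ is not post-critical, $\phi^{-n}(\alpha)$ has exactly $d^n$ elements for every $n$, so the trees and Galois groups are well-defined. First I would dispose of the three cases of Theorem \ref{normalform} by noting that in all of them $\phi$ is conjugate over $K_0 = K(\gamma_1,\gamma_2)$ to a map of the shape $p(z)/q(z)$ where $p(z) - v\,q(z)$ is a scalar multiple of $z^d$ for $v = \phi(\gamma_1)$ (taking $\gamma_1$ to be the critical point sent to $0$, and interpreting things projectively if $v = \infty$). Replacing $\phi$ by this conjugate changes neither the isomorphism type of $G_\infty$ nor the tree structure, so I may assume $\phi$ has a totally ramified point at $0$ with $\phi(0) = v$. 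Concretely, for $\phi(z) = (z^d+a)/(z^d+b)$ one has $\phi(z) - a/b = \tfrac{(1 - a/b)\,z^d}{z^d+b}$, so for any $\beta$ with $\phi(\beta) = \gamma$, the full fiber $\phi^{-1}(\gamma)$ is $\{\zeta\beta : \zeta \in \mathbf{U}_d\}$ (when $\beta \neq 0,\infty$; the degenerate fibers over $0$ and $\infty$ are handled the same way since $0,\infty$ are the totally ramified points). This simultaneously shows $\mathbf{U}_d$ acts by multiplication on each $\phi^{-n}(\alpha)$, that the orbits are exactly the fibers of $\phi\colon \phi^{-n}(\alpha) \to \phi^{-(n-1)}(\alpha)$, and — picking any single $\beta_1 \in \phi^{-1}(\alpha)$, noting $\beta_1 \neq 0$ as $\alpha$ is not post-critical hence $\alpha \neq v = \phi(0)$ — that $\mathbf{U}_d = \{\beta/\beta_1 : \beta \in \phi^{-1}(\alpha)\} \subseteq K_1(\alpha)$.

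Next I would establish the embedding $G_n(\alpha)/G_{n-1}(\alpha) \hookrightarrow (C_d)^{d^{n-1}}$. The point is that $G_{n-1}(\alpha) = \Gal(K_n(\alpha)/K_{n-1}(\alpha))$ is the subgroup fixing $K_{n-1}(\alpha)$, i.e. acting trivially on $\phi^{-(n-1)}(\alpha)$. Such an automorphism $\sigma$ permutes each fiber $\phi^{-1}(\delta)$ over $\delta \in \phi^{-(n-1)}(\alpha)$ among itself. Fixing in each such fiber a choice of element $\beta_\delta$, every element of the fiber is $\zeta\beta_\delta$ for a unique $\zeta \in \mathbf{U}_d$, and $\sigma(\beta_\delta) = \zeta_\delta \beta_\delta$ for some $\zeta_\delta \in \mathbf{U}_d$; since $\sigma$ fixes $\delta$ and hence commutes with "divide by $\beta_\delta$", $\sigma$ acts on the whole fiber by multiplication by $\zeta_\delta$. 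The assignment $\sigma \mapsto (\zeta_\delta)_{\delta}$ is a homomorphism $G_{n-1}(\alpha) \to \mathbf{U}_d^{d^{n-1}} \cong (C_d)^{d^{n-1}}$, and it is injective because $\phi^{-n}(\alpha)$ is the union of the fibers. Quotienting, $G_n(\alpha)/G_{n-1}(\alpha) = G_n(\alpha)/\Gal(K_n(\alpha)/K_{n-1}(\alpha)) = \Gal(K_{n-1}(\alpha)/K_0)$ — wait, I must be careful: what embeds into $(C_d)^{d^{n-1}}$ is $\Gal(K_n(\alpha)/K_{n-1}(\alpha))$, which is canonically the kernel $G_{n-1}(\alpha)$ of $G_n(\alpha) \twoheadrightarrow G_{n-1}(\alpha)$ in the notation of the statement; I would phrase it as: the kernel of the natural surjection $G_n(\alpha) \to G_{n-1}(\alpha)$ embeds into $(C_d)^{d^{n-1}}$, which is exactly the assertion "$G_n(\alpha)/G_{n-1}(\alpha)$ embeds into $(C_d)^{d^{n-1}}$" read with the standard abuse identifying $G_{n-1}(\alpha)$ with a quotient of $G_n(\alpha)$.

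Finally, under the extra hypothesis $\mathbf{U}_d \subseteq K_0$, I would build the labeling $X^* \to T^d_\infty$ recursively so as to witness the inclusion into $[C_d]^\infty$. Fix a generator $\zeta$ of $\mathbf{U}_d$. Label the root by $\alpha$. Having labeled level $n-1$ by the length-$(n-1)$ words, for each $\delta$ at level $n-1$ with label word $w$, choose one preimage $\beta_w \in \phi^{-1}(\delta)$ and label $\zeta^{j}\beta_w$ by the word $wj$ for $j \in \{0,\dots,d-1\}$. In this labeling, for $\sigma \in G_\infty(\alpha)$ and any vertex $w$ at level $n-1$, $\sigma$ sends the fiber above $w$ to the fiber above $\sigma(w)$ by multiplication by some $d$th root of unity (same argument as above, using $\mathbf{U}_d \subseteq K_0$ so $\sigma$ fixes $\mathbf{U}_d$ pointwise); reading this multiplication against the labels shows $\sigma$ acts at $w$ by a cyclic permutation of the $d$ children, i.e. by an element of $C_d$ in the portrait sense. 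A permutation of $X^*$ all of whose local actions are cyclic is precisely an element of $[C_d]^\infty \subseteq \Aut(X^*)$ in the sense of \cite[Section 2]{adamshyde}, giving the injection in Eq. \eqref{injection}.

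The main obstacle I anticipate is bookkeeping rather than mathematics: making the recursive labeling genuinely compatible across levels so that "multiply by $\zeta$" corresponds uniformly to the standard cyclic generator of $C_d$ at every vertex, and checking that the degenerate fibers — those containing $0$ or $\infty$, which arise exactly when some $\phi^i(\alpha)$ equals $v$ or equals the other critical value, but which the non-post-critical hypothesis controls — do not disrupt the multiplicative description. Once the normal form is in hand the rest is a careful unwinding of definitions.
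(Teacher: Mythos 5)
Your proof is correct and follows essentially the same route as the paper's. Both arguments conjugate $\phi$ over $K_0$ into the normal form, observe that a preimage set $\phi^{-1}(\beta)$ for $\beta$ in the tree is the solution set of an equation $\gamma^d = C$ with $C \in K_0(\beta)^\times$ (equivalently, that the numerator and denominator of $\phi$ are functions of $z^d$ alone, so $\phi(\zeta z)=\phi(z)$ for $\zeta \in \textbf{U}_d$), deduce $\textbf{U}_d \subseteq K_1(\alpha)$ by taking $\beta = \alpha$ and ratios of preimages, and then read off cyclicity of the relative Galois action on each fiber. Two small remarks: (i) the paper conjugates to the uniform shape $(c_1z^d+a)/(c_2z^d+b)$, which covers all three cases of Theorem \ref{normalform} in one stroke rather than handling $cz^d$, $c/z^d$, and $(z^d+a)/(z^d+b)$ separately, but this is a bookkeeping convenience, not a mathematical difference; and (ii) the paper resolves your ``degenerate fibers'' worry more cleanly by noting that non-post-criticality forces $T_\infty(\phi_1,\alpha)\cap\{0,\infty\}=\emptyset$, so no vertex of the tree is ever a totally ramified point and no degenerate fiber ever appears --- there is nothing to ``handle the same way.'' Your reading of the (admittedly abusive) notation $G_n(\alpha)/G_{n-1}(\alpha)$ as $\Gal(K_n(\alpha)/K_{n-1}(\alpha))$, i.e.\ the kernel of the restriction surjection, matches the paper's intent. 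For the final inclusion into $[C_d]^\infty$ the paper simply cites the labeling construction of \cite[Section 3]{adamshyde}; your recursive construction of the labeling is a correct unwinding of that same machinery.
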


\begin{proof}
    It follows from Theorem \ref{normalform} that $\phi$ is conjugate by $\mu \in \PGL_2(K_0)$ to 
    $$
     \phi_1(z) = \frac{c_1z^d + a}{c_2z^d + b},
    $$
    where $c_1, c_2, a, b \in K_0$ and $bc_1 \neq ac_2$.
    Because $\mu \in \PGL_2(K_0)$, 
    we have 
    $$\mu(\phi^{-n}(\alpha)) =  (\phi^\mu)^{-n}(\mu(\alpha)) = \phi_1^{-n}(\mu(\alpha))$$
   for all $n \geq 1$, and thus $K_0(\phi^{-n}(\alpha)) = K_0(\phi_1^{-n}(\mu(\alpha)))$. Hence it suffices to prove the theorem for $\phi_1$ with basepoint $\mu(\alpha)$. To ease notation we continue to use $\alpha$ instead of $\mu(\alpha)$. 

    Because $\alpha$ is not post-critical, $T_\infty(\phi_1, \alpha) \cap \{0, \infty\} = \emptyset$, and hence for any $\beta \in T_\infty(\phi_1, \alpha)$ we have that $\phi_1^{-1}(\beta)$ consists of the $d$ distinct solutions to $\gamma^d = C$, where $C = \frac{b\beta - a}{c_1 - c_2\beta} \in K \setminus \{0\}$. Taking $\beta = \alpha$ then gives that $\textbf{U}_d \subseteq K_1(\phi, \alpha)$. For each $\gamma \in \phi^{-n}(\alpha)$ and each $\zeta_d^m \in \textbf{U}_d$, we have $\phi(\gamma) = \phi(\zeta_d^m \gamma)$, proving the assertions about the action of $\textbf{U}_d$ on $\phi^{-n}(\alpha)$. Because $\textbf{U}_d \subseteq K_1(\phi, \alpha)$, the action of $G_n(\phi, \alpha)/G_{n-1}(\phi, \alpha)$ on each fiber of the map $\phi^{-n}(\alpha) \to \phi^{-(n-1)}(\alpha)$ is cyclic, proving that this group embeds into $ (C_d)^{\#\phi^{-(n-1)}(\alpha)} = (C_d)^{d^{n-1}}$.

   Finally, when $\textbf{U}_d \subseteq K_0$, one establishes \eqref{injection} using the same labeling constructed in the setting of iterated monodromy groups (where $\alpha$ is replaced by an element transcendental over $K_0$). See e.g. \cite[Section 3]{adamshyde}.
\end{proof}


We record the following consequence of Corollary \ref{critspec}, Theorem \ref{normalform}, and Corollary \ref{wreath}.

\begin{corollary} \label{gooddef}
Let hypotheses and notation be as in Corollary \ref{wreath}. If $\phi$ has a trailing critical point, or precisely one critical point of $\phi$ is pre-periodic, or both critical points of $\phi$ are pre-periodic but with different period or pre-period, then $K_0 = K$ in Corollary \ref{wreath}, and there is $\mu \in \PGL_2(K)$ such that $\phi^\mu$ has one of the forms given in Theorem \ref{normalform}. In particular, $K_n(\phi, \alpha) = K_n(\phi^\mu, \mu(\alpha))$ for all $n \geq 1$. 
%
\end{corollary}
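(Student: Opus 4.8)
The plan is to chase the hypotheses through Corollary \ref{critspec} and Theorem \ref{normalform}, checking at each stage that the conjugating M\"obius transformation can be chosen over $K$ rather than over $\Kbar$. First I would invoke Corollary \ref{critspec}: each of the three stated hypotheses on $\phi$ forces a critical orbit relation of a type that lands in case (\ref{casetwo}) or case (\ref{casethree}) being \emph{excluded}. Indeed, a trailing critical point gives a relation $\phi^n(\gamma_1) = \phi^m(\gamma_2)$ with $n > m \geq 0$, which falls outside case (\ref{casethree}) (that case requires $n = m \geq 2$) and outside case (\ref{casetwo}) unless $\gamma_1,\gamma_2$ are both pre-periodic with equal period \emph{and} pre-period. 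If exactly one critical point is pre-periodic, then case (\ref{casetwo}) (both pre-periodic) is impossible and case (\ref{casethree}) (neither pre-periodic) is impossible, so we are forced into case (1), i.e.\ $\phi$ has no critical orbit relations --- but wait, having a pre-periodic critical point \emph{is} a critical orbit relation $\phi^s(\gamma_i) = \phi^t(\gamma_i)$, a contradiction, so in fact $K(\gamma_1,\gamma_2) = K$ must hold because the hypothesis $K(\gamma_1,\gamma_2) \neq K$ is untenable. The same dichotomy applies when both critical points are pre-periodic but with different period or pre-period: this is incompatible with case (\ref{casetwo}), incompatible with case (\ref{casethree}), and incompatible with case (1), so again $K(\gamma_1,\gamma_2) = K$. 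The upshot of this first step is: under each hypothesis, $K_0 = K(\gamma_1,\gamma_2) = K$.

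Next I would produce the element $\mu \in \PGL_2(K)$. In the proof of Theorem \ref{normalform}, the conjugacy is built by first choosing $\mu_0 \in \PGL_2(\Kbar)$ with $\mu_0(\gamma_1) = 0$ and $\mu_0(\gamma_2) = \infty$, then post-composing with a scaling $z \mapsto cz$ (or, in the second exceptional case, with an inversion-type map) to normalize $\phi(\infty)$. Since we have just shown $\gamma_1, \gamma_2 \in \PP^1(K)$, we may choose $\mu_0 \in \PGL_2(K)$: any M\"obius transformation sending two distinct $K$-rational points to $0$ and $\infty$ can be written with coefficients in $K$. The conjugated map $\phi^{\mu_0}$ then has coefficients in $K$ and critical points $\{0,\infty\}$, so it has the form $(c_1 z^d + a)/(c_2 z^d + b)$ with $c_1, c_2, a, b \in K$, exactly as in Eq.\ \ref{firstform}. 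The subsequent normalizing steps --- dividing numerator and denominator by $c_2$, and conjugating by the $K$-rational scaling $z \mapsto (1/c_3) z$ with $c_3 = c_1/c_2 \in K$ --- are all carried out over $K$. (In the case $\{\phi(\gamma_1),\phi(\gamma_2)\} = \{0,\infty\}$ leading to the forms $cz^d$ or $c/z^d$, one likewise checks $c \in K$, and the conjugacy is already over $K$.) Composing all these, we obtain $\mu \in \PGL_2(K)$ with $\phi^\mu$ in one of the three forms of Theorem \ref{normalform}.

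Finally, for the last assertion, since $\mu \in \PGL_2(K) \subseteq \PGL_2(K_0)$ we have $\mu(\phi^{-n}(\alpha)) = (\phi^\mu)^{-n}(\mu(\alpha))$ for all $n \geq 1$, exactly as in the opening paragraph of the proof of Corollary \ref{wreath}; applying $K_0 = K$ to both sides and taking the field generated gives $K_n(\phi,\alpha) = K(\phi^{-n}(\alpha)) = K((\phi^\mu)^{-n}(\mu(\alpha))) = K_n(\phi^\mu, \mu(\alpha))$. One should also note that $\mu(\alpha)$ inherits the hypotheses needed for Corollary \ref{wreath} applied to $\phi^\mu$: $\alpha$ not post-critical for $\phi$ translates to $\mu(\alpha)$ not post-critical for $\phi^\mu$, since $\mu$ carries the post-critical set of $\phi$ to that of $\phi^\mu$.

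The main obstacle is the bookkeeping in the first step: one must carefully verify that \emph{each} of the three listed hypotheses genuinely contradicts \emph{every} one of the three alternatives in Corollary \ref{critspec}, so that the only escape is $K(\gamma_1,\gamma_2) = K$. The subtle point is that a pre-periodic critical point is itself a critical orbit relation, which immediately rules out alternative (1); the rest is then a matter of matching pre-period/period data against alternatives (\ref{casetwo}) and (\ref{casethree}). Everything after that is a routine descent-of-scalars check on the conjugacy constructed in Theorem \ref{normalform}.
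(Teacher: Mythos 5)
Your overall strategy --- deduce $K(\gamma_1,\gamma_2)=K$ from Corollary \ref{critspec}, then get the conjugacy of Theorem \ref{normalform} over $K$, then transport the preimage fields as in Corollary \ref{wreath} --- is exactly the route the paper intends, and your handling of the second and third hypotheses (precisely one critical point pre-periodic; both pre-periodic with different period or pre-period), of the descent of the conjugating map, and of the final field equality is correct. (Step two can in fact be shortened: once $K(\gamma_1,\gamma_2)=K$, Theorem \ref{normalform} as stated already provides $\mu\in\PGL_2(K)$, so re-running its proof is unnecessary, though harmless.)

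There is, however, a genuine gap in the trailing-critical-point case, and you flag it yourself without closing it: you say the trailing relation falls outside case (\ref{casetwo}) ``unless $\gamma_1,\gamma_2$ are both pre-periodic with equal period and pre-period,'' and that ``unless'' is never discharged. It cannot be discharged from the statement of Corollary \ref{critspec} alone, because a relation $\phi^n(\gamma_1)=\phi^m(\gamma_2)$ with $n>m\geq 0$ is perfectly compatible with case (\ref{casetwo}): for example, if the critical points form a $2$-cycle, $\phi(\gamma_1)=\gamma_2$ and $\phi(\gamma_2)=\gamma_1$, then $\phi^1(\gamma_1)=\phi^0(\gamma_2)$ is a trailing relation in the sense of the introduction's definition (and is not a collision), both critical points are periodic with the same period and pre-period, and $K(\gamma_1,\gamma_2)\neq K$ genuinely occurs --- e.g.\ $\phi(z)=-4z/(z^2+2)$ over $\Q$ has critical points $\pm\sqrt{2}$ interchanged by $\phi$, and no $\mu\in\PGL_2(\Q)$ can move them to $\{0,\infty\}$. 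So for the first hypothesis you must either read ``trailing'' more restrictively (e.g.\ assume the critical orbits are infinite, as the authors do in the corresponding observation in the introduction, which kills case (\ref{casetwo}) outright), or supply an additional argument analyzing how the order-two Galois automorphism of $K(\gamma_1,\gamma_2)$ over $K$ acts on the common critical cycle. A smaller point: your reason for excluding case (\ref{casethree}) (``that case requires $n=m\geq 2$'') is not right as written; case (\ref{casethree}) asserts that neither critical point is pre-periodic, and the correct exclusion comes from applying the Galois automorphism to the trailing relation and substituting, which forces both critical points to be pre-periodic.
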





\section{Rigid divisibility and Irreducibility results} 

\label{sec: rigid}

Here we prove that, under a good reduction assumption, rational functions without linear terms lead to rigid divisibility sequences. Similar phenomena have been noted for monic polynomials with integer coefficients (e.g. \cite[Proposition 3.2]{rice}, \cite[Lemma 5.3]{quaddiv}). We wish to generalize these results to rational functions, and allow the coefficients to be in the field of fractions of an arbitrary Dedekind domain. 

In our definition of rigid divisibility, we allow for a finite set of ``bad" primes, as in the definition used in \cite{quaddiv}. By a complete set of non-archemedean absolute values on a field $K$, we mean a set containing one choice from each equivalence class. 

\begin{definition}
\label{def:rigid}
Let $K$ be a field and $A$ be a complete set of non-archemedean absolute values on $K$. Let $(c_i)_{i\geq 1}$ be a sequence in $K$ and let $S \subseteq A$ be finite. We say $(c_i)_{i\geq 1}$ is an \emph{$S$-rigid divisibility sequence} if for all $|\cdot| \in A \setminus S$, the following hold:
\begin{enumerate}
    \item If $|c_n|<1$, then $|c_n| = |c_{kn}|$ for all $k\geq 1$.
    \item If $|c_m|<1$ and $|c_n|<1$, then $|c_{\gcd(m,n)}|<1$.
\end{enumerate}
We call $(c_i)_{i \geq 1}$ a \textit{rigid divisibility sequence} if it is an $S$-rigid divisibility sequence with $S = \emptyset$.
\end{definition}

For the rest of this section, we work in the setting where $K$ is the field of fractions of a Dedekind domain $R$. First note that if $\phi = p/q$ for $p,q \in K[z]$, then each coefficient $a$ of either $p$ or $q$ generates a fractional ideal in $K$. Thus there is $c \in R$ with $c(a) \subset R$, implying $ca \in R$. Letting $C$ be the product of such $c$ as we vary over all coefficients of $p$ or $q$, we have $Cp, Cq \in R[z]$. Hence we may assume without loss that $\phi$ is the quotient of two polynomials with coefficients in $R$.

In our main result (Theorem \ref{thm:rigid}), we consider $\phi = p/q \in R(z)$ and take the set $S$ to be the absolute values where either $\phi$ has bad reduction or the pair $p,q$ is not normalized (see Section \ref{subsection: normalized and reduction} for definitions). It's thus desirable to replace the pair $p, q$ with constant multiples $cp, cq$ that are normalized with respect to as many absolute values as possible. In some cases $p,q$ is normalized with respect to every $|\cdot| \in A$; for instance, when at least one of the coefficients of $\phi$ is a unit in $R$. In the case where $R$ is a principal ideal domain, we obtain a similar result for any rational function:

\begin{proposition} \label{prop:PID}
Let $K$ be the field of fractions of a principal ideal domain $R$, and fix a complete set $A$ of non-archemedean absolute values on $K$. Let $p, q \in R[z]$. Then there is $c \in K$ such that the pair $cp, cq$ is normalized with respect to every $|\cdot| \in A$.
\end{proposition}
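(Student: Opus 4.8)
The plan is to take the content $\mathfrak{c}(p,q)$ of the pair $p,q$ — that is, the ideal of $R$ generated by all coefficients of $p$ together with all coefficients of $q$ — and use the fact that $R$ is a PID to write $\mathfrak{c}(p,q) = (g)$ for some $g \in R$, which is a gcd of all the coefficients. I would then set $c = 1/g$ and claim the pair $cp = p/g$, $cq = q/g$ is normalized with respect to every $|\cdot| \in A$. Two things need to be checked: first, that $p/g$ and $q/g$ actually have coefficients in $R$, and second, that for every non-archimedean $|\cdot| \in A$ all these coefficients have absolute value at most $1$ and at least one has absolute value exactly $1$.

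The first point is immediate: every coefficient $a$ of $p$ or $q$ lies in $(g)$, so $g \mid a$ in $R$ and $a/g \in R$; hence $p/g, q/g \in R[z]$. For the second point, fix $|\cdot| \in A$, corresponding to a prime (maximal) ideal $\mathfrak{p}$ of $R$ via $|x| < 1 \iff x \in \mathfrak{p}$ (for $x \in R$). Since every coefficient of $p/g$ and $q/g$ lies in $R$, each has absolute value at most $1$, which is the first half of being normalized. For the second half, I need some coefficient of $p/g$ or $q/g$ to be a $\mathfrak{p}$-unit, i.e. to lie outside $\mathfrak{p}$. Suppose not: then every coefficient $a/g$ of $p/g$ or $q/g$ lies in $\mathfrak{p}$, so every coefficient $a$ of $p$ or $q$ lies in $g\mathfrak{p} \subseteq (g)\mathfrak{p}$. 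But then the content ideal $\mathfrak{c}(p,q) = (g)$ is contained in $(g)\mathfrak{p}$, i.e. $(g) \subseteq (g)\mathfrak{p}$. Since $R$ is a domain and $g \neq 0$ (note $g = 0$ would force $p = q = 0$, a degenerate case one can either exclude or handle by taking $c=1$), we may cancel $(g)$ to get $R \subseteq \mathfrak{p}$, contradicting $\mathfrak{p}$ being a proper ideal. Hence some coefficient of the pair $p/g,q/g$ is a $\mathfrak{p}$-unit, completing the argument for this $|\cdot|$; since $|\cdot|$ was arbitrary, the pair is normalized with respect to every element of $A$.

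There is no serious obstacle here — the proof is essentially the observation that dividing by a gcd of all coefficients produces a pair of content-$1$ (unit ideal) polynomials, and content $1$ is exactly the normalization condition, checked prime by prime. The one point requiring a word of care is the passage between the absolute value $|\cdot|$ and the associated prime ideal $\mathfrak{p}$ of $R$: a non-archimedean absolute value on $K = \mathrm{Frac}(R)$ need not a priori be one of the $\mathfrak{p}$-adic valuations coming from primes of $R$. However, any non-archimedean $|\cdot|$ has valuation ring containing $R$ when restricted appropriately — more precisely, $\{x \in R : |x| \le 1\} = R$ forces nothing, but $\{x \in R : |x| < 1\}$ is a prime ideal $\mathfrak{p}$ of $R$, and the only properties I actually use are that this set is a proper prime ideal and that $|a| \le 1$ for $a \in R$. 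The latter holds whenever $R$ is contained in the valuation ring of $|\cdot|$; if some element of $R$ had absolute value $>1$ the statement could fail, so in the intended reading of ``complete set of non-archimedean absolute values on $K$'' one takes those bounded by $1$ on $R$ (equivalently the $\mathfrak{p}$-adic ones, up to equivalence). I would state this normalization of conventions explicitly at the start of the proof, then the argument goes through verbatim.

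\begin{proof}
We may assume $p$ and $q$ are not both zero (otherwise take $c = 1$). Let $\mathfrak{c}$ be the ideal of $R$ generated by all the coefficients of $p$ together with all the coefficients of $q$; this is a nonzero ideal, so since $R$ is a PID we may write $\mathfrak{c} = (g)$ with $g \in R \setminus \{0\}$. Set $c = 1/g \in K$. Since every coefficient $a$ of $p$ or $q$ lies in $(g)$, we have $a/g \in R$, so $cp = p/g$ and $cq = q/g$ lie in $R[z]$.

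Fix $|\cdot| \in A$. Its restriction to $R$ is bounded by $1$ (this is part of the standing conventions on the set $A$), so $\mathfrak{p} := \{x \in R : |x| < 1\}$ is a proper prime ideal of $R$, and $|x| \le 1$ for all $x \in R$. In particular every coefficient of $p/g$ and of $q/g$ has absolute value at most $1$. It remains to show some coefficient of $p/g$ or $q/g$ has absolute value exactly $1$, i.e. lies outside $\mathfrak{p}$. If not, then every coefficient $a/g$ (as $a$ ranges over the coefficients of $p$ and $q$) lies in $\mathfrak{p}$, so every such $a$ lies in $g\mathfrak{p}$, whence $\mathfrak{c} = (g) \subseteq g\mathfrak{p} = (g)\mathfrak{p}$. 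Cancelling the nonzero principal ideal $(g)$ in the domain $R$ gives $R \subseteq \mathfrak{p}$, contradicting that $\mathfrak{p}$ is proper. Hence the pair $p/g, q/g$ is normalized with respect to $|\cdot|$. As $|\cdot| \in A$ was arbitrary, this proves the proposition.
\end{proof}
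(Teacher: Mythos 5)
Your proof is correct, but it takes a different route from the paper's. The paper works place by place: it notes that only finitely many $|\cdot| \in A$ have all coefficients of absolute value $<1$, and for each such bad place $|\cdot|_j$ it uses the PID hypothesis to pick $\pi_j \in R$ with $|\pi_j|_j$ equal to the maximal coefficient size at that place and $|\pi_j| = 1$ at every other place, then divides by $\pi = \pi_1\cdots\pi_t$. You instead make a single global move: the content ideal of the pair is principal, $\mathfrak{c}(p,q) = (g)$, and dividing by $g$ produces a pair in $R[z]$ of unit content, which is exactly the normalization condition checked prime by prime. Both arguments invoke the PID hypothesis at the same essential point (existence of a generator of $\mathfrak{c}$, resp.\ of elements with prescribed valuation at one prime and trivial valuation elsewhere), and your version is arguably cleaner since it avoids enumerating the bad places. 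One further point in your favor: you explicitly flag that the argument requires every $|\cdot| \in A$ to be bounded by $1$ on $R$, i.e.\ to arise from a prime of $R$. The paper's proof tacitly needs the same hypothesis --- its claim that one can choose $\pi_j$ with $|\pi_j| = 1$ at all other places fails, for instance, at the degree place of $k(t)$ over $R = k[t]$, and indeed the proposition itself fails there (take $p = t$, $q = 1$) --- so your explicit convention is not a weakness of your write-up but a gap in the paper's framing that does not affect its intended application to $R = \Z$, $K = \Q$, where Ostrowski's theorem guarantees all non-archimedean absolute values are $p$-adic.
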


\begin{proof}
    Let $\{a_1, \ldots, a_n\}$ be the union of the non-zero coefficients of $p$ and the non-zero coefficients of $q$. 
    The set $\{|\cdot| \in A : \text{$|a_i| < 1$ for all $i$}\}$ is finite, and we write it 
    $\{|\cdot|_1, \ldots, |\cdot|_t\}$. For each $j \in \{1, \ldots, t\}$, let 
    $$
    m_j = \max_{1 \leq i \leq n}\{|a_i|_j \} < 1.
    $$
    Because $R$ is a principal ideal domain, we may select $\pi_j \in R$ with $|\pi_j|_j = m_j$ and $|\pi_j| = 1$ for all other $|\cdot| \in A$. Taking $\pi = \pi_1 \pi_2 \cdots \pi_t$, we have that $(1/\pi)p, (1/\pi)q$ is normalized with respect to every $|\cdot| \in A$.
\end{proof}



To motivate our main result, we give an example that illustrates the application we will make in Section \ref{Galois}.

\begin{example} \label{ex:rigid}
Let $\phi(z) = \frac{z^2 + 1}{z^2 + 3} \in \Q(z)$. Note that $\phi$ is normalized with respect to every non-archimedean absolute value on $\Q$, has no linear term in either numerator or denominator, and has bad reduction only at the $2$-adic absolute value. Eq. \ref{eq:p_n,q_n} gives $p_1(z) = z^2 + 1$, $q_1(z) = z^2 + 3$, and for $n \geq 2$, 
$$
p_n = p_{n-1}^2 + q_{n-1}^2, \qquad q_n = p_{n-1}^2 + 3q_{n-1}^2.
$$
The prime factorizations of the first 8 terms of the sequence $(p_n(0))_{n \geq 1}$ are 
$$\begin{array}{l}
 1   \\
2  \cdot 5  \\
2^2 \cdot 13 \cdot 17 \\
2^5  \cdot 5 \cdot 42461 \\ 
2^{10} \cdot 109 \cdot 13337 \cdot 268897 \\
2^{21} \cdot 5 \cdot 13 \cdot 17 \cdot 193 \cdot 11969 \cdot 3144217 \cdot 82530809 \\
2^{42} \cdot 157 \cdot 15170009 \cdot \ell_1 \\
2^{85} \cdot 5 \cdot 521 \cdot 7297 \cdot 7841 \cdot 42461 \cdot 697121 \cdot 207272581 \cdot \ell_2,
\end{array}
$$
where $\ell_1 \approx 10^{39}$ and $\ell_2 \approx 10^{68}$.
\end{example}

We now prove Theorem \ref{intro: rigid}, which we restate here for the convenience of the reader.

\begin{theorem}
\label{thm:rigid}
Let $K$ be the field of fractions of a Dedekind domain $R$, and fix a complete set $A$ of non-archemedean absolute values on $K$. Let $\phi \in K(z)$ have degree $d \geq 2$, and let $p,q \in R[z]$ be relatively prime polynomials with $\phi = p/q$.
Let $p_n$ and $q_n$ be as in Eq. \ref{eq:p_n,q_n}, and let $S \subseteq A$ be the set of absolute values at which either $\phi$ has bad reduction or the pair $p,q$ is not normalized. Assume that 
$$
p'(0) = q'(0) = 0.
$$
Then the sequence $(p_n(0))_{n\geq 1}$ is an $S$-rigid divisibility sequence.
\end{theorem}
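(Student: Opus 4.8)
The plan is to work one absolute value at a time. Fix $|\cdot| \in A \setminus S$; by definition $\phi$ has good reduction at $|\cdot|$ and the pair $p,q$ is normalized there. Let $k$ be the residue field and write $\widetilde{(\cdot)}$ for reduction. The key structural input is Proposition \ref{prop: reduction}: for every $n \geq 1$ the pair $p_n, q_n$ is normalized, and $\widetilde{P_n}, \widetilde{Q_n}$ have no common root in $\PP^1(\overline{k})$, so in particular $\max\{|p_n(0)|, |q_n(0)|\} = 1$. This means $|p_n(0)| < 1$ is equivalent to $\widetilde{q_n}(0) \neq 0$ together with $\widetilde{p_n}(0) = 0$, i.e. to $\widetilde{\phi^n}(0) = 0$ in $\PP^1(k)$ (with $\widetilde{\phi^n}$ the reduced iterate, which equals $\widetilde{\phi}^{\,n}$ since reduction commutes with composition by Eq.\ \ref{eq: PQdef}). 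The hypothesis $p'(0) = q'(0) = 0$ is what makes $0$ behave like a ramification point of $\widetilde{\phi}$: I would first record that it forces $p(z) \equiv p(0) + z^2(\cdots)$ and $q(z) \equiv q(0) + z^2(\cdots)$, hence $p_n(z) \equiv p_n(0) + z^2 r_n(z)$ and $q_n(z) \equiv q_n(0) + z^2 s_n(z)$ for polynomials $r_n, s_n \in R[z]$ --- this is a straightforward induction using Eq.\ \ref{eq:p_n,q_n} and the fact that substituting series with no linear term into a series with no linear term produces a series with no linear term.

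\textbf{Divisibility property (1).} Suppose $|p_n(0)| < 1$; I want $|p_{kn}(0)| = |p_n(0)|$ for all $k \geq 1$. Set $\pi = p_n(0)$, so $|\pi| < 1$, and note $|q_n(0)| = 1$. Working in $R$ localized at the prime corresponding to $|\cdot|$ (a discrete valuation ring, since $R$ is Dedekind), I claim by induction on $k$ that $p_{kn}(0) \equiv \pi \cdot u_k \pmod{\pi^2 R}$ for a unit $u_k$; since this already pins down $|p_{kn}(0)| = |\pi|$, it suffices. The inductive step uses $\phi^{(k+1)n} = \phi^{kn} \circ \phi^n$, i.e.\ $p_{(k+1)n}(0) = P_{kn}(p_n(0), q_n(0)) = P_{kn}(\pi, q_n(0))$. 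Now expand $P_{kn}$ in powers of its first argument around $(0, q_n(0))$: the constant term is $P_{kn}(0, q_n(0)) = q_n(0)^{d^{kn}} p_{kn}(0)$ up to the homogenization bookkeeping --- more cleanly, dehomogenize and use the series expansion $p_{kn}(z) = p_{kn}(0) + z^2 r_{kn}(z)$, $q_{kn}(z) = q_{kn}(0) + z^2 s_{kn}(z)$ established above, evaluated appropriately after the substitution. Because the linear term vanishes, plugging in the argument $\pi$ (or more precisely the relevant ratio, which lies in $\pi R$) contributes only at order $\pi^2$, so $p_{(k+1)n}(0) \equiv (\text{unit}) \cdot p_{kn}(0) \pmod{\pi^2}$, and the induction closes. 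The point worth stating carefully is that good reduction keeps everything integral at $|\cdot|$ and keeps $q_m(0)$ a unit for all $m$, so no cancellation or denominator spoils the congruence.

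\textbf{Divisibility property (2).} Suppose $|p_m(0)| < 1$ and $|p_n(0)| < 1$, and let $g = \gcd(m,n)$; I want $|p_g(0)| < 1$. Reducing mod $|\cdot|$, the first paragraph shows $\widetilde{\phi}^{\,m}(0) = 0$ and $\widetilde{\phi}^{\,n}(0) = 0$ in $\PP^1(k)$, i.e.\ $0$ is a periodic point of $\widetilde{\phi}$ whose period divides both $m$ and $n$, hence divides $g$; therefore $\widetilde{\phi}^{\,g}(0) = 0$, which by Proposition \ref{prop: reduction} says exactly $|p_g(0)| < 1$. The only subtlety is legitimizing ``the period divides $\gcd(m,n)$'': this is the standard fact that the set of $j$ with $\widetilde{\phi}^{\,j}(0) = 0$ is closed under the group operation on $\Z$, which holds because $0 = \widetilde{\phi}^{\,m}(0) = \widetilde{\phi}^{\,m-n}(\widetilde{\phi}^{\,n}(0)) = \widetilde{\phi}^{\,m-n}(0)$ whenever $m > n$ are both in the set --- so it contains $m - n$ and, descending by the Euclidean algorithm, contains $g$.

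\textbf{Main obstacle.} The conceptual content is light once Proposition \ref{prop: reduction} is in hand; the real work is in property (1), making precise the claim that the vanishing of linear terms propagates through the iteration Eq.\ \ref{eq:p_n,q_n} in a way that survives the homogeneous-versus-dehomogenized substitution and the localization at $|\cdot|$. I expect the fiddliest point to be bookkeeping the homogenization factors $q_{kn}(0)^{d^n}$ (and their analogues) and confirming they are units so they don't affect $|p_{(k+1)n}(0)|$; this is where the good-reduction hypothesis, via $\max\{|p_m(0)|,|q_m(0)|\} = 1$ for all $m$, is doing the essential job. Everything else is a clean translation of "$0$ is a ramified periodic point of the reduction" into statements about the integer sequence $(|p_n(0)|)$.
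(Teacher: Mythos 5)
Your proposal is correct and follows essentially the same route as the paper's proof: fix a place outside $S$, invoke Proposition \ref{prop: reduction} to get $\max\{|p_i(0)|,|q_i(0)|\}=1$ for all $i$, use the vanishing of the linear terms at $0$ (propagated to all iterates) to show the ultrametric estimate that forces $|p_{kn}(0)|=|p_n(0)|$, and deduce the gcd property from the fact that good reduction makes $|p_i(0)|<1$ equivalent to $\tilde{\phi}^i(0)=0$, so $0$ is periodic for $\tilde{\phi}$ with period dividing both $m$ and $n$. The only differences are cosmetic --- you compose as $\phi^{kn}\circ\phi^{n}$ and track a congruence modulo $\pi^2$ in the valuation ring (which, note, need not literally be a localization of $R$ at a prime, but your estimates only use the ultrametric inequality and normalization, so this is harmless), whereas the paper composes as $\phi^{n}\circ\phi^{n(k-1)}$ and argues directly with absolute values.
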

\begin{proof}
Let $|\cdot| \in A \setminus S$. Because the pair $p, q$ is normalized with respect to $|\cdot|$ and $\phi$ has good reduction at $|\cdot|$, Proposition \ref{prop: reduction} gives that $\max\{|p_i(0)|, |q_i(0)|\}=  1$ for each $i \geq 1$. Thus $|\phi^i(0)| < 1$ is equivalent to $|p_i(0)| < 1$, and when either holds we have $|\phi^i(0)| = |p_i(0)|$ and $|q_i(0)| = 1$.

To prove that $(p_n(0))_{n \geq 1}$ is an $S$-rigid divisibility sequence, we assume $|\phi^n(0)| < 1$ and prove $|\phi^{nk}(0)| = |\phi^n(0)|$ for all $k \geq 1$ by inducting on $k$. The case $k = 1$ is true by hypothesis, so assume $|\phi^{n(k-1)}(0)| = |\phi^n(0)|$. 
Using the chain rule on the recursions in Eq. \ref{eq:p_n,q_n} and the hypothesis that $p'(0) = q'(0) = 0$, we have that $p_n'(0) = q_n'(0) = 0$. Write $p_n(z) = z^2f(z) + p_n(0)$, where $f \in R[z]$. 
We have 
$$|\phi^{n(k-1)}(0)| = |\phi^n(0)| = |p_n(0)| < 1,$$
whence $|\phi^{n(k-1)}(0)|^2 < |p_n(0)|$. Because $|\cdot|$ is non-archemedean and $f$ has coefficients in $R$, it follows that
\begin{equation} \label{strong}
|p_{n}(\phi^{n(k-1)}(0))| = |(\phi^{n(k-1)}(0))^2f(\phi^{n(k-1)}(0)) + p_n(0)| = |p_n(0)|. 
\end{equation}
Our assumption that $|\phi^n(0)| < 1$ implies $|q_n(0)| = 1$. Using a similar argument to the one giving Eq. \ref{strong}, we find $|q_{n}(\phi^{n(k-1)}(0))| = 1$. Therefore
\begin{equation*} \label{first}
|\phi^{nk}(0)| = |\phi^n(\phi^{n(k-1)}(0))| = \frac{|p_n(\phi^{n(k-1)}(0))|}{|q_n(\phi^{n(k-1)}(0))|} = |p_n(0)| = |\phi^n(0)|,
\end{equation*}
 as desired.

To complete the proof, suppose $|\phi^m(0)|<1$ and $|\phi^n(0)|<1$ for some $m, n \geq 1$. Take the smallest positive integer $t$ with $|\phi^t(0)|<1$. We claim that $t \mid m$ and $t \mid n$ must hold. Write $m = tq + r$ for $0 \leq r \leq t-1$, and let $\tilde{\phi}$ be the reduction of $\phi$. Observe that because $\phi$ has good reduction at $|\cdot|,$ $|\phi^{i}(0)| < 1$ is equivalent to $\tilde{\phi}^i(0) = 0$ for any $i \geq 1$. Thus 
$$
0 = \tilde{\phi}^m(0) = \tilde{\phi}^r(\tilde{\phi}^{qt}(0)) = \tilde{\phi}^r(0),
$$
which contradicts the minimality of $t$ unless $r = 0$. Hence $t \mid m$, and a similar argument gives $t \mid n$. Therefore $t \mid g$, where $g = \gcd(m,n)$. It follows that $\tilde{\phi}^g(0) = 0$, whence $|\phi^g(0)| < 1$, as desired.
\end{proof}

\begin{remark}
    Theorem \ref{thm:rigid} requires that $p_n$ have the argument of $0$ because $|p_n(0)| < 1$ occurs precisely when $\tilde{\phi}^n(0) = 0$, implying $0$ is periodic under $\tilde{\phi}$ with period dividing $n$. The sequence $(q_n(0))_{n \geq 1}$ is not in general an $S$-rigid divisibility sequence because $|q_n(0)| < 1$ occurs when $\tilde{\phi}^n(0) = \infty$, giving no periodicity information on an orbit of $\tilde{\phi}$. 

\end{remark}

We turn now to irreducibility results, which will be useful in Section \ref{Galois} in proving that $G_\infty(\phi, \alpha) \cong \Aut(T_\infty(\phi, \alpha))$.  One is, however, often interested in establishing only $[\Aut(T_\infty(\phi, \alpha)) : G_\infty(\phi, \alpha)] < \infty$, and for this purpose it is enough to know that $(\phi, \alpha)$ is eventually stable over $K$, that is, that the number of irreducible factors of $p_n(z) - \alpha q_n(z)$ (or just $q_n(z)$ when $\alpha = \infty$) is bounded. The most general result on this topic known to the authors is a consequence of \cite[Theorem 4.6]{evstab}.  We state it here in the setting of bicritical rational functions, using the normal form of case (3) of Theorem \ref{normalform}. We note that eventual stability of $(\phi, \alpha)$ in the case $\phi(z)=  cz^d$ and $\phi(z) = c/z^d$ is well-understood (see e.g. \cite[Section 3.4]{evstab}). 

\begin{theorem}
 Let $K$ be a field of characteristic $\ell \geq 0$, and let 
 $$
 \phi(z) = \frac{z^d + a}{z^d + b} \in K(z),
 $$
where $a \neq b$, $d \geq 2$, and either $\ell = 0$ or $\ell > d$. Assume that $\alpha \in \PP^1(K)$ is not a periodic point for $\phi$. Let $|\cdot|$ be a non-archimedean absolute value on $K$ whose residue field $k$ has characteristic $p \geq 0$. Suppose that one of the following holds:
\begin{enumerate}
    \item $k$ is finite, $d$ is a power of $p$, $|a| \leq 1, |b| \leq 1$, and $|a-b| = 1$.
    \item $|a| < 1$, $|b| = 1$, and $|\alpha| < 1$.
\end{enumerate} 
Then $(\phi, \alpha)$ is eventually stable over $K$.
\end{theorem}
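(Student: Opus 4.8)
The plan is to deduce eventual stability from the criterion of \cite[Theorem 4.6]{evstab}, which (roughly) says that $(\phi,\alpha)$ is eventually stable over $K$ provided there is a non-archimedean absolute value $|\cdot|$ with respect to which one can control the reduction of the critical orbits relative to $\alpha$: concretely, one wants the forward orbit of $\alpha$ under $\tilde\phi$ to avoid the critical values of $\tilde\phi$, or more precisely one wants the ``adjusted post-critical distance'' at $\alpha$ to stay bounded away from the degeneracy locus. So the first step is to record the reduction data for $\phi(z) = (z^d+a)/(z^d+b)$: the critical points are $0$ and $\infty$ (using Corollary \ref{cor: RH} and $\ell = 0$ or $\ell > d$), with critical values $\phi(0) = a/b$ and $\phi(\infty) = 1$. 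So the ``bad'' base points, the ones whose backward orbit could fail to split, are governed by how $a/b$ and $1$ sit relative to $\alpha$ under iteration modulo $|\cdot|$.

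In case (2), where $|a| < 1$, $|b| = 1$, $|\alpha| < 1$: here $\tilde\phi(z) = (z^d + \tilde a)/(z^d + \tilde b) = z^d/(z^d+\tilde b)$ with $\tilde a = 0$, $\tilde b \ne 0$, so $\tilde\phi$ is a rational function of degree $d$ with good reduction (numerator and denominator share no root in $\overline k$ since $\tilde b \ne 0$), and $\tilde\phi(0) = 0$: the point $0$ is a fixed critical point of $\tilde\phi$. Since $|\alpha| < 1$ we have $\tilde\alpha = 0$, so $\alpha$ reduces to the fixed point $0$. The key observation is then that $\tilde\phi^{-1}(0) = \{0\}$ with multiplicity $d$ (as $0$ is totally ramified), so modulo $|\cdot|$ the tree collapses entirely to the single point $0$; one then invokes the eventual-stability criterion, whose hypothesis is precisely that $\alpha$ is not periodic (which is assumed) together with this good-reduction-plus-totally-ramified-fixed-point structure. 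I would check that the relevant ``non-periodic mod $|\cdot|$ or non-periodic over $K$'' hypothesis of \cite[Theorem 4.6]{evstab} is met: since $\alpha$ is not a periodic point of $\phi$ over $K$ by assumption, we are fine.

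In case (1), where $k$ is finite, $d$ is a power of $p = \operatorname{char} k$, $|a| \le 1$, $|b| \le 1$, $|a - b| = 1$: here $\phi$ has good reduction (the numerator $z^d + \tilde a$ and denominator $z^d + \tilde b$ have no common root because $\tilde a \ne \tilde b$), and because $d$ is a power of $p$ we have $z^d + \tilde a = (z + \tilde a^{1/d})^d$ and similarly for the denominator in $\overline k[z]$ — so $\tilde\phi$ is, up to the additive substitutions $z \mapsto z + \tilde a^{1/d}$ etc., purely inseparable composed with a Möbius map; in particular $\tilde\phi$ is a bijection on $\mathbb P^1(\overline k)$ and $\tilde\phi^{-1}$ of any point is a single point with multiplicity $d$. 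Thus again the reduced tree above $\tilde\alpha$ is a single ray, and one applies \cite[Theorem 4.6]{evstab} exactly as before; the finiteness of $k$ is what lets one conclude that every point of $\mathbb{P}^1(k)$ is preperiodic for $\tilde\phi$ and hence that the bounded-orbit hypothesis of that theorem holds.

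The main obstacle, and the step requiring the most care, is matching the precise hypotheses of \cite[Theorem 4.6]{evstab} to the two cases: that theorem is phrased in terms of a technical condition (bounding the number of ``critically bad'' primes, or the non-archimedean distance from $\alpha$'s orbit to the post-critical set), and I would need to verify that in case (1) the condition $|a - b| = 1$ is exactly what guarantees good reduction and that the totally-ramified structure of $\tilde\phi$ forces the requisite bound, while in case (2) the conditions $|a|<1$, $|b|=1$, $|\alpha|<1$ together force $\tilde\alpha$ to land on the totally ramified fixed point $0$. A secondary point to be careful about: one must ensure $\alpha$ not being periodic for $\phi$ over $K$ is genuinely the only global hypothesis needed — i.e. that eventual stability does not also require $\alpha$ to be non-periodic (or non-critical) modulo $|\cdot|$, which could fail. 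If \cite[Theorem 4.6]{evstab} does demand such a condition, the fix is to note that in both cases the reduced map $\tilde\phi$ has a unique totally ramified fixed point and all preimage trees collapse, so the hypothesis is automatically satisfied regardless of where $\tilde\alpha$ lands; I would spell this out as the crux of the argument.
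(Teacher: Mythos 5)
Your Case (2) argument is the paper's proof: $|a|<1$, $|b|=1$ give good reduction with $\tilde{\phi}=z^d/(z^d+\tilde{b})$, so $0$ is a totally ramified fixed point of $\tilde{\phi}$, $|\alpha|<1$ forces $\tilde{\alpha}=0$, and \cite[Theorem 4.6]{evstab} applies since $\alpha$ is not periodic. That part is fine.

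Case (1) has a genuine gap. You try to run both cases through \cite[Theorem 4.6]{evstab}, but that theorem (as it is used here and in the paper) requires the base point to reduce to a totally ramified \emph{fixed} point of $\tilde{\phi}$, i.e. $\tilde{\phi}^{-1}(\tilde{\alpha})=\{\tilde{\alpha}\}$. In Case (1) there is no hypothesis whatsoever on $|\alpha|$: $\tilde{\alpha}$ can land anywhere in $\mathbb{P}^1(k)$, and it need not be fixed by $\tilde{\phi}$ (nor need $\tilde{\phi}$ have a totally ramified fixed point where $\tilde{\alpha}$ sits). The ``bounded-orbit hypothesis'' you invoke, and your closing patch that ``the hypothesis is automatically satisfied regardless of where $\tilde{\alpha}$ lands,'' are not hypotheses or consequences of Theorem 4.6 --- they are precisely the missing step asserted without proof. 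The paper avoids this by using a different pair of results from the same source: since $k$ is finite and $d$ is a power of $p$, the reduction $(z^d+\tilde{a})/(z^d+\tilde{b})$ is a power of Frobenius composed with a M\"obius map, so by \cite[Proposition 4.9]{evstab} $\phi$ is bijective on residue extensions, and then \cite[Corollary 4.10]{evstab} gives eventual stability for any non-periodic $\alpha$, with no condition on $\tilde{\alpha}$. Your observation that $\tilde{\phi}$ is a bijection of $\mathbb{P}^1(\overline{k})$ is the right structural insight, and it could also be used to repair your route: finiteness of $k$ makes $\tilde{\alpha}$ periodic under the bijection $\tilde{\phi}$, say of period $r$, so $(\tilde{\phi}^{\,r})^{-1}(\tilde{\alpha})=\{\tilde{\alpha}\}$ and Theorem 4.6 applies to $(\phi^r,\alpha)$; but then you must still add the transfer step that eventual stability of $(\phi^r,\alpha)$ implies that of $(\phi,\alpha)$ (e.g. because the number of irreducible factors over $K$ of the level-$n$ preimage polynomial is non-decreasing in $n$). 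As written, neither the correct citation nor this bridging argument appears, so Case (1) is not established.
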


\begin{proof}
    In Case (1), the assumptions on $|a|$, $|b|$, and $|a-b|$ imply that $\phi$ has good reduction at $|\cdot|$. By \cite[Proposition 4.9]{evstab}, the hypotheses on $k$ and $d$ imply that $\phi$ is bijective on residue extensions. The eventual stability of $(\phi, \alpha)$ then follows from \cite[Corollary 4.10]{evstab}. 
    
    In Case (2), the assumptions on $|a|$ and $|b|$ imply that $\tilde{\phi} = z^d/(z^d + \tilde{b})$. Thus this is the case of good, polynomial reduction. Note that $\tilde{\phi}^{-1}(0) = \{0\}$ and $|\alpha| < 1$ implies $\tilde{\alpha} = 0$. The eventual stability of $(\phi, \alpha)$ then follows from \cite[Theorem 4.6]{evstab}. 
\end{proof}

For the results of Section \ref{Galois} we need the stronger result that $\phi$ is stable, i.e. $p_n(z) - \alpha q_n(z)$ (or $q_n(z)$ if $\alpha = \infty$) is irreducible over $K$ for all $n \geq 1$. We restrict to the case that $\phi$ has degree 2 and $\alpha = 0$. Our main tool is  \cite[Lemma 3.4]{galrat}: 

\begin{lemma}[\cite{galrat}]
\label{lemma:galrat-stability}
Let $K$ be a number field and let $\phi=p(z)/q(z)\in K(z)$ have degree $2$, where $p, q \in K[z]$ are relatively prime. Let $\gamma_1, \gamma_2\in \mathbb{P}^1(\overline{K})$ be the critical points of $\phi$, let $p_n, q_n\in K[z]$ be as in Eq. \ref{eq:p_n,q_n}, and let $K_n = K(\phi^{-n}(0))$ be the splitting field of $p_n$. Assume that $p_{n-1}$ is irreducible in $K[z]$, and let $\beta \in \overline{K}$ be a root of $p_{n-1}$.

Then there exists $C\in K$ such that $p_n$ is irreducible in $K[z]$ if and only if
\[C\prod_{\phi(\gamma_i)\neq \infty} (\phi(\gamma_i)-\beta)\notin K_{n-1}^{\times 2},\]
where $K_{n-1}^{\times 2}$ denotes the set of non-zero squares in $K_{n-1}$.
If $p$ has distinct roots in $\overline{K}$, then we may take 
\[C = \text{Disc}(p) \cdot \prod_{\phi(\gamma_i)\neq \infty} \phi(\gamma_i)^{-1},\]
where $\text{Disc}(p)$ is the discriminant of $p$.
\end{lemma}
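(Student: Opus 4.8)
The plan is to reduce the irreducibility of $p_n$ over $K$ to the irreducibility over $K_{n-1}$ of the single quadratic $p(z)-\beta q(z)$, and then to identify the discriminant of that quadratic with the displayed product over the critical values.

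First I would record how the roots of $p_n$ lie over those of $p_{n-1}$. Since $\phi^n=\phi^{n-1}\circ\phi$, the polynomial $p_n$ is obtained by substituting $z\mapsto\phi(z)$ into $p_{n-1}$ and clearing denominators; writing $p_{n-1}$ over $\overline{K}$ as a nonzero constant times $\prod_i(w-\beta_i)$, where $\beta=\beta_1,\dots,\beta_{2^{n-1}}$ are the roots of $p_{n-1}$ (distinct, as $K$ has characteristic $0$ and $p_{n-1}$ is irreducible), this gives
\[
p_n(z)=c\prod_{i=1}^{2^{n-1}}\bigl(p(z)-\beta_i q(z)\bigr)\qquad\text{for a nonzero constant }c\in K.
\]
The standing non-degeneracy hypotheses — those making $\phi^{-n}(0)$ consist of $2^n$ distinct elements — guarantee that each factor $p(z)-\beta_i q(z)$ is a separable quadratic: otherwise $\beta_i$ would be a critical value of $\phi$, forcing $p_n$, hence $p_{n-1}$, to have a repeated root, or $\beta_i$ would equal $\phi(\infty)$, which is excluded because $\infty\notin\phi^{-n}(0)$. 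So the roots of $p_n$ are exactly the fibers $\phi^{-1}(\beta_i)$, each of size $2$.

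Next I would establish the equivalence: $p_n$ is irreducible over $K$ if and only if $p(z)-\beta q(z)$ is irreducible over $K_{n-1}$. For the implication needed in the applications, assume $p(z)-\beta q(z)$ is irreducible over $K_{n-1}$. Since $\text{Gal}(K_{n-1}/K)$ permutes the $\beta_i$ transitively (here irreducibility of $p_{n-1}$ is used) and carries $p(z)-\beta q(z)$ to $p(z)-\beta_i q(z)$, every factor $p(z)-\beta_i q(z)$ is irreducible over $K_{n-1}$, so for each $i$ some element of $\text{Gal}(K_n/K_{n-1})$ interchanges the two points of $\phi^{-1}(\beta_i)$. Together with the transitivity of $\text{Gal}(K_{n-1}/K)$ on the $\beta_i$, this makes $\text{Gal}(K_n/K)$ transitive on all $2^n$ roots of $p_n$, so $p_n$ is irreducible. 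In parallel I would compute $\Delta(\beta):=\text{disc}_z\bigl(p(z)-\beta q(z)\bigr)$: this is a polynomial in $\beta$ with coefficients in $K$, of degree at most $2$, and $\Delta(\beta_0)=0$ precisely when $p(z)-\beta_0q(z)$ has a repeated root, i.e.\ when $\beta_0$ is a finite critical value. As the only critical values are $\phi(\gamma_1)$ and $\phi(\gamma_2)$ by Corollary \ref{cor: RH}, we get $\Delta(\beta)=C\prod_{\phi(\gamma_i)\neq\infty}(\phi(\gamma_i)-\beta)$ for a constant $C\in K$; specializing at $\beta=0$ (legitimate since $0$ is not a critical value, else $\phi^{-n}(0)$ could not have $2^n$ elements) gives $\Delta(0)=\text{Disc}(p)$ when $p$ has distinct roots, so $C=\text{Disc}(p)\prod_{\phi(\gamma_i)\neq\infty}\phi(\gamma_i)^{-1}$. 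Since a quadratic over $K_{n-1}$ with nonzero discriminant is irreducible exactly when its discriminant lies outside $K_{n-1}^{\times 2}$, the equivalence above is precisely the statement of Lemma \ref{lemma:galrat-stability}.

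The hard part will be the converse half of that equivalence, $p_n$ irreducible $\Rightarrow p(z)-\beta q(z)$ irreducible over $K_{n-1}$. What is immediate is only irreducibility over $K(\beta)$: if $p_n$ is irreducible then a root $\alpha\in\phi^{-1}(\beta)$ has $[K(\alpha):K]=2^n$ while $[K(\beta):K]=2^{n-1}$, so $[K(\alpha):K(\beta)]=2$. Passing from $K(\beta)$ to $K_{n-1}$ requires excluding the possibility that $\Delta(\beta)$ is a nonsquare in $K(\beta)$ but a square in the splitting field $K_{n-1}$; conjugating the $\beta_i$ shows this would force every fiber quadratic to split over $K_{n-1}$, i.e.\ $K_n=K_{n-1}$ with $p_n$ nonetheless irreducible of degree $2^n$. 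For $n\le 2$ this is impossible by orders, since $\text{Gal}(K_{n-1}/K)$ embeds into the automorphism group of the level-$(n-1)$ binary rooted tree, of order $2^{2^{n-1}-1}<2^n$; for larger $n$ one excludes it using finer information about the Galois image. I would note, though, that this converse is not needed for the intended application — an induction proving $p_n$ irreducible for all $n$ — which uses only ``$\Delta(\beta)\notin K_{n-1}^{\times 2}\Rightarrow p_n$ irreducible,'' and that direction is unconditional.
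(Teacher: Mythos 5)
Your factorization of $p_n$ over $\overline{K}$ into the fiber quadratics $p(z)-\beta_i q(z)$, the identification of $\mathrm{disc}_z\bigl(p(z)-\beta q(z)\bigr)$ with $C\prod_{\phi(\gamma_i)\neq\infty}(\phi(\gamma_i)-\beta)$ (using that the two critical values are distinct by Corollary \ref{cor: RH}) together with the evaluation $C=\text{Disc}(p)\prod_{\phi(\gamma_i)\neq\infty}\phi(\gamma_i)^{-1}$ at $\beta=0$, and the transitivity argument giving ``$C\prod(\phi(\gamma_i)-\beta)\notin K_{n-1}^{\times 2}\Rightarrow p_n$ irreducible over $K$'' are all sound, granting the nondegeneracy hypotheses inherited from \cite{galrat} that make each fiber quadratic separable (your aside that a repeated root of $p_n$ would force one of $p_{n-1}$ is not right, but the appeal to $\#\phi^{-n}(0)=2^n$ does the job).

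The genuine gap is the other half of the stated equivalence, which you flag but do not prove. From irreducibility of $p_n$ you extract only $[K(\alpha):K(\beta)]=2$, i.e.\ non-squareness of the discriminant in $K(\beta)$; the lemma asserts non-squareness in the splitting field $K_{n-1}$, and an element of $K(\beta)$ that is not a square there can in principle become a square in the Galois closure. Ruling that out is exactly ruling out the scenario $K_n=K_{n-1}$ with $p_n$ irreducible of degree $2^n$, and your order count $|\mathrm{Gal}(K_{n-1}/K)|\leq 2^{2^{n-1}-1}$ disposes of it only for $n\leq 2$: already at $n=3$ one has $2^{2^{n-1}-1}=2^n$, and for every $n\geq 3$ a transitive (indeed regular) action of a subgroup of $\Aut(T_{n-1})$ on the $2^n$ leaves lying over its level-$(n-1)$ action is abstractly possible, so no purely group-theoretic counting can close this. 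The phrase ``finer information about the Galois image'' is therefore a placeholder for the actual content of the ``only if'' direction, and no argument is supplied. Your observation that this direction is never used in the paper is correct --- Lemma \ref{lemma:stability} and the arguments of Section \ref{Galois} invoke only the implication you do prove --- but the statement under review is the full equivalence from \cite{galrat}, so as a proof of that statement the proposal is incomplete.
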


We focus for the rest of this section on the family 
\begin{equation} \label{eq:mainfamily}
\phi=\frac{z^2+a}{z^2}\in K(z). 
\end{equation}
Thus $p = p_1 = z^2 + a$, $q = q_1 = z^2$, and from Eq. \ref{eq:p_n,q_n} we have for $n \geq 2$,
\begin{equation}
\label{eq:p_n,q_n mainfamily}
p_n = p_{n-1}^2+aq_{n-1}^2,\qquad q_n = p_{n-1}^2.
\end{equation}
Applying Lemma \ref{lemma:galrat-stability} to this family gives:
\begin{lemma}
\label{lemma:stability}
Let $K$ be a number field, let $\phi = (z^2 + a)/z^2 \in K[z]$ with $a \neq 0$, and let $p_n, q_n \in K[z]$ be as in Eq. \ref{eq:p_n,q_n mainfamily}. 
Suppose that $p_{n-1}$ is irreducible in $K[z]$ for some $n\geq 2$. Then $p_n$ is irreducible in $K[z]$ if $p_{n-1}(1)$ is not a square in $K$.
\end{lemma}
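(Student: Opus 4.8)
The plan is to deduce everything from Lemma~\ref{lemma:galrat-stability}. For $\phi(z) = (z^2+a)/z^2$ the numerator is $p = z^2+a$ and the denominator is $q = z^2$; the two critical points are $\gamma_1 = 0$ and $\gamma_2 = \infty$, and one computes $\phi(0) = \infty$ while $\phi(\infty) = 1$. Thus the product $\prod_{\phi(\gamma_i)\neq\infty}(\phi(\gamma_i)-\beta)$ in Lemma~\ref{lemma:galrat-stability} has the single factor $1-\beta$, and since $p = z^2+a$ has distinct roots (because $a\neq 0$) we may take $C = \mathrm{Disc}(z^2+a)\cdot\phi(\infty)^{-1} = -4a$. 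So Lemma~\ref{lemma:galrat-stability} gives: assuming $p_{n-1}$ irreducible with root $\beta$, the polynomial $p_n$ is irreducible in $K[z]$ if and only if $-4a(1-\beta)\notin K_{n-1}^{\times 2}$. It therefore suffices to prove the contrapositive of what we want: if $-4a(1-\beta)\in K_{n-1}^{\times 2}$, then $p_{n-1}(1)\in K^{\times 2}$.

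The next step is to clean up the square condition. Since the roots of $p_{n-1}$ are exactly the elements of $\phi^{-(n-1)}(0)$, we have $\phi^{n-2}(\beta)\in\phi^{-1}(0) = \{\pm\sqrt{-a}\}$, and $\phi^{n-2}(\beta)\in K(\beta)$; hence $\sqrt{-a}\in K(\beta)$ and $-4a = (2\sqrt{-a})^2$ is a square in $K(\beta)\subseteq K_{n-1}$. So $-4a(1-\beta)\in K_{n-1}^{\times 2}$ is equivalent to $1-\beta\in K_{n-1}^{\times 2}$. Combining this with the elementary tower description — a preimage $\gamma\in\phi^{-1}(\beta)$ satisfies $\gamma^2 = a/(\beta-1)$, so $p_n$ is reducible exactly when $a/(\beta-1)\equiv 1-\beta$ is a square \emph{in $K(\beta)$} (using again $-a\in K(\beta)^{\times 2}$) — we reduce to the statement: $1-\beta\in K(\beta)^{\times 2}$ implies $p_{n-1}(1)\in K^{\times 2}$. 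Then I would take the norm from $K(\beta)$ to $K$: since $p_{n-1}$ is irreducible, its roots $\beta = \beta_1,\dots,\beta_{2^{n-1}}$ form a full Galois orbit over $K$, so $1-\beta\in K(\beta)^{\times 2}$ forces $N_{K(\beta)/K}(1-\beta) = \prod_i(1-\beta_i)$ to be a square in $K$; and this product equals $p_{n-1}(1)$ divided by the leading coefficient of $p_{n-1}$. Finally, one proves by induction on the recursion in Eq.~\ref{eq:p_n,q_n mainfamily} the identity that the leading coefficient of $p_n$ equals $p_{n-1}(1)$ (and the leading coefficient of $q_n$ equals $q_{n-1}(1)$), with the convention $p_0 = z$; this pins down the leading coefficient of $p_{n-1}$ as $p_{n-2}(1)$.

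The main obstacle, and the place that needs genuine care, is exactly this last bookkeeping: a single application of $N_{K(\beta)/K}$ returns $p_{n-1}(1)/p_{n-2}(1)$, not $p_{n-1}(1)$ itself, so to conclude that $p_{n-1}(1)$ is a square one cannot merely invoke multiplicativity of the norm. For $n = 2$ there is no issue, since the leading coefficient of $p_1$ is $1$ and the norm gives $p_1(1)$ on the nose. For $n\ge 3$ I would instead descend through the chain of quadratic subfields $K(\sqrt{-a}) = K(\phi^{n-2}(\beta))\subsetneq\dots\subsetneq K(\phi(\beta))\subsetneq K(\beta)$, at each stage replacing the current element by its norm to the next subfield and simplifying via the explicit formula $N(1-\delta) = 1-\delta^2 = (\gamma'-1-a)/(\gamma'-1)$ for a quadratic generator $\delta$ with $\delta^2 = a/(\gamma'-1)$; tracking these norms through the tower (and using the leading-coefficient identity at each level) should telescope the obstruction back to $p_{n-1}(1)$ modulo squares. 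Equivalently, one can work over $L = K(\sqrt{-a})$ using the factorization $p_n = \bigl(p_{n-1}-\sqrt{-a}\,q_{n-1}\bigr)\bigl(p_{n-1}+\sqrt{-a}\,q_{n-1}\bigr)$, reduce the irreducibility of $p_n$ over $K$ to that of the factor $g = p_{n-1}-\sqrt{-a}\,q_{n-1}$ over $L$, and exploit that $N_{L/K}\bigl(g(1)\bigr) = p_{n-1}(1)^2 + a\,q_{n-1}(1)^2 = p_n(1)$; I would expect the cleanest write-up to follow whichever of these two routes makes the leading-coefficient cancellation most transparent.
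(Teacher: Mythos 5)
Your argument follows the paper's exactly through the application of Lemma~\ref{lemma:galrat-stability}: you identify $C = -4a$, note that $-4a$ is already a square in $K_1$, and reduce to showing $1-\beta$ is not a square via a norm to $K$. Where you part ways is in observing, correctly, that $p_{n-1}$ is \emph{not} monic for $n \geq 3$: from Eq.~\ref{eq:p_n,q_n mainfamily} one has $\ell(p_2) = 1+a$, and more generally $\ell(p_{n-1}) = p_{n-2}(1) = f_n$ (Lemma~\ref{lemma:ell(p_n)}), so $\mathrm{Nm}_{K(\beta)/K}(1-\beta) = p_{n-1}(1)/\ell(p_{n-1})$, not $p_{n-1}(1)$. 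This is in fact an error in the paper's own proof, which asserts that $p_{n-1}$ ``is monic'' and then writes the norm as $p_{n-1}(1)$ on the nose. You have caught a genuine slip: the quantity that the norm actually forces to be a non-square is $\ell(p_{n-1})\,p_{n-1}(1) = f_n f_{n+1}$ up to squares, which is consistent with the explicit $\ell(p_{n-1})$ factor that appears in Theorem~\ref{thm:galrat-maximality}.

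However, your two proposed repairs are sketches, and neither visibly closes the gap. The quadratic-subfield descent gives $N_{K(\beta)/K(\phi(\beta))}(1-\beta) = 1 - \beta^2 = (\phi(\beta) - 1 - a)/(\phi(\beta)-1)$, which is not equal to $1 - \phi(\beta)$ even modulo squares, so there is no evident telescope back to $p_{n-1}(1)$. The factorization $p_n = (p_{n-1} - \sqrt{-a}\,q_{n-1})(p_{n-1} + \sqrt{-a}\,q_{n-1})$ over $L = K(\sqrt{-a})$ yields $N_{L/K}(g(1)) = p_n(1)$, which controls the level-$n$ criterion rather than the level-$(n-1)$ hypothesis the lemma invokes. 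As stated over a general number field, the lemma is not established by the norm computation alone, and a careful treatment should track the factor $\ell(p_{n-1}) = f_n$. (In the paper's downstream use over $\Q$ with $a \equiv 2 \pmod 4$ and $a \leq -3$ the gap turns out to be harmless, since $\gcd(f_n,f_{n+1}) = 1$ and the signs of $f_n, f_{n+1}$ alternate, so $f_nf_{n+1} < 0$ is automatically a non-square; but that argument is not the one written.)
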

\begin{proof}
Let $K_n$ be the splitting field of $p_n$ and $\beta \in \overline{K}$ a root of $p_{n-1}$. The critical points of $\phi$ are $\gamma_1=0$ and $\gamma_2=\infty$, with $\phi(0) = \infty$ and $\phi(\infty) =1$. We have $p=z^2+a$, which has distinct roots in $\overline{K}$ since $a \neq 0$ and $K$ has characteristic $\neq 2$. Since $\phi(\gamma_1)=\infty$, with the notation of Lemma~\ref{lemma:galrat-stability}, we obtain $C = -4a(\phi(\infty)^{-1}) = -4a$. From Lemma~\ref{lemma:galrat-stability} we conclude that
\[p_n\text{ is irreducible in $K[z]$ if and only if }-4a(1-\beta)\notin K_{n-1}^{\times 2}.\]
Note that $-a$ (hence $-4a$) is already a square in $K_1$, which gives $-4a\in (K_{n-1}^{\times})^2$ since $K_1\subseteq K_{n-1}$. Hence, we conclude
\[p_n\text{ is irreducible in }K[z]\text{ if and only if }1-\beta \notin (K_{n-1}^{\times})^2.\]
Let $\beta_1, \beta_2, \dots, \beta_k$ be the roots of $p_{n-1}$ in $\overline{K}$, where we assume $\beta_1 = \beta$.
Observe from Eq. \ref{eq:p_n,q_n mainfamily} that $p_{n-1}$ is monic. This combined with the irreducibility of $p_{n-1}$ gives
\[\text{Nm}_{K_{n-1}/K}(1-\beta) = \prod_{i=1}^{k} (1-\beta_i) = p_{n-1}(1).\]
Because $1-\beta$ cannot be a square in $K_{n-1}$ if $\text{Nm}_{K_{n-1}/K}(1-\beta)$ is not a square in $K$, the proof is complete.
\end{proof}

\section{Galois-theoretic results over $\Q$} \label{Galois}

In this section we consider the family in Eq. \ref{eq:mainfamily} with ground field $K = \Q$ and basepoint $\alpha = 0$. We write $K_n$ for $K_n(0)$, $G_n$ for $G_n(0)$, and $T_\infty$ for $T^2_\infty$. Note that the map $G_\infty \hookrightarrow \Aut(T_\infty)$ in Eq. \ref{intro: wreathprod eqn}. is an isomorphism if and only if $[K_n : K_{n-1}] = 2^{2^{n-1}}$ for all $n \geq 1$. 

\begin{theorem} \label{thm:surjectivityA}
    Let $\phi(z) = \frac{z^2+a}{z^2},$
    where $a \in \Z$ satisfies $a \equiv 2 \pmod{4}$ and $a \leq -3$. If $n \geq 1$ is not square-free, then $[K_n : K_{n-1}] = 2^{2^{n-1}}$.
\end{theorem}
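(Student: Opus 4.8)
The plan is to combine the rigid divisibility of $(p_n(0))_{n \geq 1}$ from Theorem~\ref{thm:rigid} with the stability criterion of Lemma~\ref{lemma:stability}, using a size/sign argument to guarantee non-squareness at the relevant steps. First I would check the hypotheses of Theorem~\ref{thm:rigid}: for $\phi = (z^2+a)/z^2$ we have $p = z^2 + a$ and $q = z^2$, so $p'(0) = q'(0) = 0$; since $a \equiv 2 \pmod 4$ the pair $p,q$ is normalized at every non-archimedean absolute value of $\Q$ except possibly the $2$-adic one, and $\phi$ has bad reduction only at $2$ (the resultant of $z^2+a$ and $z^2$ is $a^2$, divisible only by $2$ given $a \equiv 2 \pmod 4$ and — wait, $a$ could have odd prime factors). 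More carefully: $\phi$ has good reduction at a prime $v \nmid 2a$, and the pair $p,q = z^2+a,\,z^2$ is already normalized at every finite $v$ (the coefficient $1$ of $z^2$ is a unit). So $S \subseteq \{v : v \mid 2a\} \cup \{2\} = \{v : v \mid 2a\}$, a finite set, and $(p_n(0))_{n\geq 1}$ is an $S$-rigid divisibility sequence.

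Next I would set up the induction on $n$. By Lemma~\ref{lemma:stability}, to show $[K_n : K_{n-1}] = 2^{2^{n-1}}$ it suffices to show $p_{n-1}$ is irreducible over $\Q$ (which propagates the surjectivity at level $n-1$, or is handled inductively / via \cite{galrat}) and that $p_{n-1}(1)$ is not a rational square. From Eq.~\ref{eq:p_n,q_n mainfamily}, $p_n = p_{n-1}^2 + a q_{n-1}^2$ and $q_n = p_{n-1}^2$, so $q_n(1) = p_{n-1}(1)^2$ and $p_n(1) = p_{n-1}(1)^2 + a q_{n-1}(1)^2 = p_{n-1}(1)^2 + a p_{n-2}(1)^4$ for $n \geq 3$. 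The key observation is that $p_n(1) = p_n(1) \cdot q_n(1)^{-1} \cdot q_n(1)$ relates to $\phi^n(1)$: indeed $\phi^n(1) = p_n(1)/q_n(1)$, and since $q_n(1) = p_{n-1}(1)^2$ is a square, $p_{n-1}(1)$ is a square in $\Q$ if and only if $p_n(1)$ is a square times $q_n(1)$, i.e. iff $\phi^n(1)$ is a square in $\Q$. Actually more directly: $p_{n-1}(1)$ not a square $\iff$ $\phi^{n-1}(1) = p_{n-1}(1)/p_{n-2}(1)^2$ is not a square (for $n \geq 3$), since dividing by the square $p_{n-2}(1)^2$ doesn't change square-ness. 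So the whole problem reduces to: \emph{for $n$ not square-free, $\phi^{n-1}(1)$ is not a square in $\Q^\times$} — except one must track signs and the finitely many bad primes.

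Here is where the hypothesis $a \leq -3$ and $n$ non-square-free enter, and this is the main obstacle. When $n$ is not square-free, write $n = m^2 k$-type decomposition; more usefully, $n$ has a repeated prime factor, so there is a prime $p$ with $p^2 \mid n$. Then $p \mid n$ and $n/p$ are both multiples of... the point is to use rigid divisibility: if $v \notin S$ and $v$ divides $p_t(0)$ for the minimal such $t$, then $v^{e} \| p_{n}(0)$ with $e$ controlled, and for $n$ not square-free one can arrange that the $v$-adic valuation of $\phi^{n-1}(1)$ (after clearing the square $p_{n-2}(1)^2$) is \emph{odd}, forcing non-squareness. Concretely I expect the argument to run: choose $v$ a prime dividing $p_t(0)$ for $t$ minimal, show via rigid divisibility that $v \mid p_n(0)$ for all multiples of $t$, then compare valuations at levels along a divisor chain of $n$; the failure of square-freeness provides the parity mismatch. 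The sign condition $a \leq -3$ (together with $a \equiv 2 \pmod 4$, so $a \leq -6$ actually, or $a = -2$ excluded) ensures $\phi^{n-1}(1) < 0$ or controls the archimedean place so that the candidate square is negative — ruling out squares for small $n$ where the valuation argument might be vacuous, and handling the base cases $n = 1$ (where $K_1 = \Q(\sqrt{-a})$ has degree $2 = 2^{2^0}$ since $-a > 0$ is not a square as $a \equiv 2 \bmod 4$... need $-a$ nonsquare, true since $-a \equiv 2 \bmod 4$). The delicate part is making the valuation/parity bookkeeping at the finitely many primes in $S$ (those dividing $2a$) not destroy the conclusion — I would handle $S$-primes separately, possibly by a direct $2$-adic computation showing $v_2(p_n(0)) = 2^{n-1} - \text{(something)}$ grows and has the right parity, as suggested by the factorizations in Example~\ref{ex:rigid} where $v_2(p_n(0))$ follows the pattern $0,1,2,5,10,21,42,85,\ldots$ (satisfying $a_n = 2a_{n-1} + (0 \text{ or } 1)$).
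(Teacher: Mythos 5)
Your proposal has a fundamental logical error in the reduction step. You invoke Lemma~\ref{lemma:stability} as though it gave the conclusion $[K_n : K_{n-1}] = 2^{2^{n-1}}$, but that lemma only establishes \emph{irreducibility} of $p_n$ over $\Q$. Irreducibility of $p_n$ is equivalent to transitivity of $G_n$ on $\phi^{-n}(0)$, which forces $[K_n : \Q] \geq 2^n$ but says nothing close to $[K_n : K_{n-1}] = 2^{2^{n-1}}$; the latter is the assertion that $G_n \to G_{n-1}$ has kernel of full size $(C_2)^{2^{n-1}}$, i.e. that the arboreal representation is surjective at level $n$. Stability is a \emph{hypothesis} needed for the maximality step, not a substitute for it. The correct tool is Theorem~\ref{thm:galrat-maximality} (from \cite{galrat}), which reduces the question to whether $\ell(p_{n-1})\,p_{n-1}(\phi(\infty)) = f_n f_{n+1}$ is a square \emph{in $K_{n-1}$}. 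Passing from ``not a square in $\Q$'' to ``not a square in $K_{n-1}$'' is itself a substantial step: it is handled in Lemma~\ref{lemma:sufficient-theta_n} by computing $\mathrm{Disc}(p_n)$ via Theorem~\ref{thm:galrat-Disc} and then producing a rational prime, unramified in $K_{n-1}$, whose valuation of $f_n f_{n+1}$ is odd. This is exactly where $\theta_{n+1}$ (the ``primitive part'' of $f_{n+1}$) and the rigid divisibility of $(f_n)$ come in — not to directly attack $\phi^{n-1}(1)$, but to isolate a prime coprime to $2a$ and to all earlier $f_i$, hence coprime to $\mathrm{Disc}(p_{n-1})$.

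Your sketch of why non-square-freeness of $n$ produces odd valuations is also not enough. You gesture at finding a prime $v$ dividing $p_t(0)$ for minimal $t$ and deducing a parity mismatch, but rigid divisibility alone gives you $v$-adic valuations that are \emph{constant} along multiples of $t$, which tends to produce \emph{matching} rather than mismatched valuations when you take the M\"obius product. The paper's actual mechanism is different and is the heart of the proof: Lemma~\ref{lemma:rad-divisibility}, the Stoll-style congruence argument. The key fact used is that $k = n/\mathrm{rad}(n) \geq 2$ when $n$ is not square-free, so one can work with $\phi^k(0)$ and $\phi^{k+1}(0)$ (both finite, since $\phi(0) = \infty$ but $\phi^j(0) \neq \infty$ for $j \geq 2$), and one produces a modulus $m$ with $\phi^k(0) \equiv -\phi^{k+1}(0) \pmod m$ and $-1$ a nonresidue mod $m$; the evenness of $\phi$ then propagates this to show the M\"obius product $\prod_{d \mid n}(\phi^d(0))^{\mu(n/d)} \equiv -1 \pmod m$. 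For $k = 2$ one takes $m = 4$ directly; for $k > 2$ one shows the numerator of $\phi^k(0) + \phi^{k+1}(0)$ is $\equiv 6 \pmod 8$ (using $a \equiv 2 \pmod 4$), hence has a prime factor $p \equiv 3 \pmod 4$, which one verifies is coprime to $\phi^k(0)$ via $\gcd(f_j, f_{j-1}) = 1$. None of this congruence machinery appears in your proposal, and the valuation-parity argument you sketch does not obviously produce it. Finally, the sign hypothesis $a \leq -3$ is not an archimedean-place safety net as you suggest; it controls $\mathrm{sgn}(\beta_n)$ via Lemma~\ref{lemma:sign of p_n} and is needed in Lemma~\ref{lemma:beta_n iff theta_n} to translate between $\beta_n$ and $|\theta_n|$ in $\Q^\times/\Q^{\times 2}$.
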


\begin{theorem}
\label{thm:surjectivityB}
Let $m\notin \{-1,0,1\}$ be an integer, and set 
$$\phi(z) = \frac{z^2-2(2m^2-1)^2}{z^2}.$$ 
Let $P$ be the set of all prime numbers, and define the sets $S_1$ and $S_2$ by 
\[S_1=\{p\in P\text{ }|\text{ }p\equiv 3\text{ }(\text{mod }4)\},\text{ }S_2 = \{p\in P\text{ }|\text{ }p\equiv 5\text{ or }7\text{ }(\text{mod }8)\}.\]
Suppose that $m \in \Z$ satisfies the following conditions:
\begin{enumerate}
    \item $m$ is congruent to $-1,0$ or $1$ modulo a prime in $S_1$.
    \item $2m$ is congruent to $-1$ or $1$ modulo a prime in $S_2$.
\end{enumerate}
Then $G_\infty \cong \text{Aut}(T_\infty)$. 
\end{theorem}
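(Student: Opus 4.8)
\textbf{Proof strategy for Theorem \ref{thm:surjectivityB}.}

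The plan is to show that the hypotheses on $m$ force $[K_n : K_{n-1}] = 2^{2^{n-1}}$ for every $n \geq 1$, which by the discussion at the start of the section is equivalent to $G_\infty \cong \Aut(T_\infty)$. The engine driving this is a combination of the stability criterion of Lemma \ref{lemma:stability} and the rigid divisibility of $(p_n(0))_{n \geq 1}$ coming from Theorem \ref{thm:rigid}, since $\phi = (z^2 + a)/z^2$ with $a = -2(2m^2-1)^2$ has $p'(0) = q'(0) = 0$ and $\phi$ has good reduction away from the primes dividing $2a$. First I would show that $[K_n : K_{n-1}]$ is maximal precisely when a certain integer $|\theta_n|$, built recursively from the $p_n(0)$, is not a perfect square; this is the reduction carried out in Lemma \ref{lemma:sufficient-theta_n}. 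Concretely, combining Lemma \ref{lemma:stability} with the inductive machinery, the step from level $n-1$ to level $n$ is controlled by whether $p_{n-1}(1)$ (equivalently, a normalized version $\theta_n$) lies in $\Q^{\times 2}$; the rigid divisibility sequence property lets one pin down the prime factorization of $\theta_n$ modulo squares, yielding that $|\theta_n|$ agrees modulo $\Q^{\times 2}$ with $\prod_{d \mid n}(\phi^d(0))^{\mu(n/d)}$ when $n$ is not square-free.

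The heart of the argument is then to prove $|\theta_n|$ is never a square. For non-square-free $n$ this follows the pattern of Theorem \ref{thm:surjectivityA}: the hypothesis $a \equiv 2 \pmod 4$, $a \leq -3$ (which $a = -2(2m^2-1)^2$ satisfies once $m \notin \{-1,0,1\}$) lets one extract a single prime, namely $2$, appearing to an odd power in $|\theta_n|$ via the $2$-adic valuations of the $p_n(0)$ read off from the recursion $p_n = p_{n-1}^2 + a q_{n-1}^2$, $q_n = p_{n-1}^2$. For square-free $n$ the product $\prod_{d\mid n}(\phi^d(0))^{\mu(n/d)}$ is not even well-defined because $\phi(0) = \infty$, so here I would replace the basepoint $0$ by a nonzero $\alpha \in \Q$ with $\phi^3(\alpha) = \phi^3(0)$ — such $\alpha$ exists rationally precisely because $a$ is the $x$-coordinate of a rational point on a genus-zero curve, which is where the special form $a = -2(2m^2-1)^2$ (parametrized by $m$) comes from. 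With this substitution the analogue of Eq. \ref{eq:prod} becomes well-defined, and one computes it modulo a cleverly chosen prime $p$: choose $p$ so that some iterate $\phi^i(\alpha)$ lands on a fixed point of $\tilde\phi$ after few steps, making the Möbius-weighted product collapse to something explicit modulo $p$, and then arrange via quadratic reciprocity that this value is a non-residue mod $p$. The two congruence conditions on $m$ — one modulo a prime in $S_1 = \{p \equiv 3 \pmod 4\}$ and one on $2m$ modulo a prime in $S_2 = \{p \equiv 5, 7 \pmod 8\}$ — are exactly the conditions needed to guarantee such a prime $p$ exists and produces a non-residue; the $S_1$ condition handles the sign $-1$ and the $S_2$ condition handles the factor $2$ in $a$.

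The main obstacle, and the step I expect to require the most care, is the square-free case: one must simultaneously (i) justify that $|\theta_n|$ really does coincide modulo rational squares with the Möbius product after the $\alpha$-substitution — this requires the rigid divisibility sequence property of Theorem \ref{thm:rigid} to control cancellation of common prime factors across the $\phi^d(\alpha)$ — and (ii) verify that the congruence hypotheses on $m$ translate, via the chosen auxiliary prime $p$ and the dynamics of $\tilde\phi$ mod $p$, into a genuine non-residue. Keeping track of the finitely many bad primes (those dividing $2a$, where reduction fails) throughout, and checking they do not interfere with the square/non-square analysis, is the kind of bookkeeping that makes this delicate. I would also need to separately dispatch the base case $n = 1$ (irreducibility of $z^2 + a$ over $\Q$, immediate since $-a > 0$ is not a square as $-a = 2(2m^2-1)^2$ and $2$ is not a square) and $n = 2$, then run the induction using Lemma \ref{lemma:stability} with the square/non-square input supplied by the above.
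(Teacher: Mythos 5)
Your proposal follows essentially the same route as the paper's proof: dispatch non-square-free $n$ by citing Theorem \ref{thm:surjectivityA}, reduce the square-free case via stability (Lemma \ref{stability{a=2(mod 4)}}) and Lemma \ref{lemma:sufficient-theta_n} to showing the relevant $|\theta_{n+1}|$ is not a square, replace the basepoint $0$ by the rational $\alpha$ with $\phi^3(\alpha)=\phi^3(0)$ furnished by the genus-zero parametrization $a=-2(2m^2-1)^2$, and collapse the M\"obius-weighted product modulo a prime guaranteed by the hypotheses on $m$, with the $S_1$ primes supplying $-1$ as a non-residue and the $S_2$ primes supplying $-2$ as a non-residue, exactly as in the paper's Cases 1 and 2. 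One aside in your sketch is inaccurate but harmless since you invoke Theorem \ref{thm:surjectivityA} as a black box: its proof does not extract the prime $2$ to an odd power in $|\theta_n|$ (indeed every $f_i$ is odd when $a\equiv 2\pmod 4$, so $|\theta_n|$ is odd); it instead applies Lemma \ref{lemma:rad-divisibility} with modulus $4$ when $k=2$, or with an odd prime $\equiv 3\pmod 4$ dividing $f_k^3+f_{k+1}f_{k-1}^2$ when $k>2$.
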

\begin{remark}
It is easy to verify that any positive integer $m$ such that $m \not\equiv 1 \pmod{4}$ satisfies the conditions of Theorem \ref{thm:surjectivityB}. 
\end{remark}


\medskip

\noindent \textbf{Notation.} Throughout this section, we use the following notation, where $\alpha \in \Q$, $p = p_1 = z^2 + a$ for $a \in \Z$, $q = q_1 = z^2$, and $p_i, q_i$ are as in Eq. \ref{eq:p_n,q_n mainfamily}. 
\begin{itemize}
    \item $\overline{\alpha}$ denotes the image of $\alpha$ in $\mathbb{Q}^{\times}/{\mathbb{Q}^{\times2}}$

    \item $|\cdot|_\ell$ and $v_\ell$ denote the $\ell$-adic absolute value and $\ell$-adic valution for a prime $\ell$

    \item for $n \geq 1$, $\beta_{\alpha, n}$ denotes the product $\prod_{d \mid n}(p_d(\alpha))^{\mu(n/d)}$, where $\mu$ is the M\"obius function \label{betadef}

    \item $\beta_n$ denotes $\beta_{0,n}$

    \item $(f_n)_{n \geq 1}$ denotes the integer sequence defined by $f_1 = f_2 = 1$, and for $n \geq 3$, 
\begin{equation} \label{fngn}
    f_n = (f_{n-1})^2 + a(f_{n-2})^4
\end{equation}
\item for $n \geq 1$, $\theta_n$ denotes the product $\prod_{d \mid n}(f_d)^{\mu(n/d)}$
\end{itemize}

\medskip


The sequences $(\beta_n)_{n \geq1}, (\theta_n)_{n \geq 1}$, and $(f_n)_{n \geq 1}$ will play key roles in the proofs of Theorems \ref{thm:surjectivityA} and \ref{thm:surjectivityB}.
We begin with a result giving properties of $(f_n)_{n \geq 1}$.

\begin{lemma}
\label{power of a}
Assume $a \neq 0$. For all $n \geq 1$ we have: 
\begin{enumerate}
    \item $p_n(0) = a^{2^{n-1}}f_n$,
    \item $f_n \equiv 1 \pmod{|a|}$,
    \item $(f_n)_{n \geq 1}$ is a rigid divisibility sequence. 
\end{enumerate}
\end{lemma}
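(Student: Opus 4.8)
The plan is to prove (1) and (2) by induction directly from the recursions in Eq.~\ref{eq:p_n,q_n mainfamily} and Eq.~\ref{fngn}, and then to obtain (3) as a consequence of Theorem~\ref{thm:rigid} together with (1) and (2).

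For (1), I would first dehomogenize at $z=0$. Since $q_1(0)=0$ we get $p_1(0)=a$ and $p_2(0)=a^2$, and for $n\geq 3$ the relation $q_{n-1}=p_{n-2}^2$ turns $p_n=p_{n-1}^2+aq_{n-1}^2$ into $p_n(0)=p_{n-1}(0)^2+a\,p_{n-2}(0)^4$, which has exactly the shape of Eq.~\ref{fngn}. Thus an induction with base cases $n=1,2$ (where both sides equal $a$, resp. $a^2$) gives the claim: assuming $p_{n-1}(0)=a^{2^{n-2}}f_{n-1}$ and $p_{n-2}(0)=a^{2^{n-3}}f_{n-2}$, one computes $p_n(0)=a^{2^{n-1}}f_{n-1}^2+a^{2^{n-1}+1}f_{n-2}^4=a^{2^{n-1}}(f_{n-1}^2+af_{n-2}^4)=a^{2^{n-1}}f_n$, using $(a^{2^{n-2}})^2=a^{2^{n-1}}$ and $a\cdot(a^{2^{n-3}})^4=a^{2^{n-1}+1}$. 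Part (2) is an even simpler induction: $f_1=f_2=1$, and for $n\geq 3$ we have $f_n=f_{n-1}^2+af_{n-2}^4\equiv f_{n-1}^2\equiv 1\pmod{|a|}$.

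For (3), the idea is to transfer rigid divisibility from $(p_n(0))_{n\geq 1}$ to $(f_n)_{n\geq 1}$ using (1), dealing separately with the primes dividing $a$ via (2). Write $\phi=p/q$ with $p=z^2+a$, $q=z^2$; these are relatively prime since $a\neq 0$, and $q$ has a unit leading coefficient, so the pair $p,q$ is normalized with respect to every non-archimedean absolute value on $\Q$. The reductions $\widetilde p=z^2+\bar a$ and $\widetilde q=z^2$ share the common root $0$ in $\PP^1(\overline{\mathbb{F}}_\ell)$ exactly when $\ell\mid a$, so $\phi$ has good reduction at $|\cdot|_\ell$ precisely when $\ell\nmid a$. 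Since $p'(0)=q'(0)=0$, Theorem~\ref{thm:rigid} applies with $S=\{|\cdot|_\ell:\ell\mid a\}$, so $(p_n(0))_{n\geq1}$ is an $S$-rigid divisibility sequence. Now fix a prime $\ell$: if $\ell\mid a$, part (2) gives $\ell\nmid f_n$ for all $n$, hence $|f_n|_\ell=1$ always and conditions (1)--(2) of Definition~\ref{def:rigid} hold vacuously at $|\cdot|_\ell$; if $\ell\nmid a$, then $a^{2^{n-1}}$ is an $\ell$-adic unit, so $|f_n|_\ell=|p_n(0)|_\ell$ for all $n$ by part (1), and the rigid divisibility conditions for $(p_n(0))$ at $|\cdot|_\ell\notin S$ carry over verbatim to $(f_n)$. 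Therefore $(f_n)_{n\geq1}$ is a rigid divisibility sequence with $S=\emptyset$.

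There is no serious obstacle here. The only step requiring care is pinning down the bad set $S$ for this particular $\phi$, and noticing — for free, from part (2) — that the primes of $S$ (the primes dividing $a$) are exactly the primes at which $(f_n)$ is a sequence of units, so they cause no failure of rigid divisibility. I would present that observation as the crux of the argument for (3).
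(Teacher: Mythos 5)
Your proof matches the paper's argument essentially step for step: the same two base cases and inductive recursion for (1), the same trivial induction for (2), and for (3) the same application of Theorem~\ref{thm:rigid} followed by the case split $\ell \mid a$ (where (2) makes $f_n$ a unit) versus $\ell \nmid a$ (where (1) gives $|f_n|_\ell = |p_n(0)|_\ell$). The only cosmetic difference is that you identify the bad primes by inspecting the reductions $\tilde p, \tilde q$ directly, while the paper invokes $\operatorname{Res}(p,q) = a^2$; both give the same set.
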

\begin{proof}
We have $p_1(0) = a$ and $p_2(0) = a^2$ so condition (1) holds for $n = 1, 2$. 
Suppose that it holds for fixed $N\geq 2$. From Eq. \ref{eq:p_n,q_n mainfamily} and Eq. \ref{fngn} we obtain
\begin{align*}
p_{N+1}(0) & = (p_N(0))^2+a(q_N(0))^2 \\
& = (p_N(0))^2+a(p_{N-1}(0))^4 \\ 
& = a^{2^N}(f_N)^2 + a^{2^N + 1}(f_{N-1})^4 \\
&= 
 a^{2^N}f_{N+1},
\end{align*}
proving condition (1).
Condition (2) holds by a straightforward induction. To prove condition (3), note that Theorem \ref{thm:rigid} and the fact that $\text{Res}(p, q) = a^2$ show that if $\ell \nmid a$, then the $\ell$-adic absolute value $|\cdot|_\ell$ satisfies the conditions of Definition \ref{def:rigid} for the sequence $(p_n(0))_{n \geq 1}$. For such $\ell$, condition (1) of the present lemma shows $|p_n(0)|_\ell = |f_n|_\ell$ for all $n \geq 1$, whence the same conclusion holds for the sequence $(f_n)_{n \geq 1}$. On the other hand, if $\ell \mid a$, then condition (2) of the present lemma shows $|f_n|_\ell = 1$ for all $n$, and thus the conditions of Definition \ref{def:rigid} hold as well for the sequence $(f_n)_{n \geq 1}$ and the absolute value $|\cdot|_\ell$.
\end{proof} 

In the next lemma, we describe the signs of $p_n(0)$ and $\beta_n$. 
Denote the sign of a non-zero real number $r$ by $\sgn(r) \in \{\pm 1\}$.
\begin{lemma}
\label{lemma:sign of p_n}
Suppose that $a \leq  - 3$, and recall 
that $\beta_n=\prod_{d|n}(p_d(0))^{\mu(n/d)}$. Then
\begin{enumerate}
   \item for all $n \geq 1$, $p_n(0) \neq 0$ and $\sgn(p_n(0)) = (-1)^n$
   \item $\sgn(\beta_n) = 1$ for all $n \geq 3$.
\end{enumerate}

\end{lemma}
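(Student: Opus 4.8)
The plan is to prove the two sign statements in turn, with (1) established first by induction and (2) deduced from (1) via a parity count on the M\"obius sum.

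For part (1), since $a \le -3$ we have $p_1(0) = a < 0$ and $p_2(0) = a^2 > 0$, so the claim $\sgn(p_n(0)) = (-1)^n$ holds for $n = 1, 2$. For the inductive step with $n \ge 2$, I would use the recursion $p_{n+1}(0) = (p_n(0))^2 + a(p_{n-1}(0))^4$ from Eq.~\ref{eq:p_n,q_n mainfamily} together with part (1) of Lemma~\ref{power of a}, namely $p_n(0) = a^{2^{n-1}} f_n$. Factoring out the highest power of $a$ that divides both terms gives $p_{n+1}(0) = a^{2^n}\bigl((f_n)^2 + a (f_{n-1})^4\bigr) = a^{2^n} f_{n+1}$. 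The key point is that the bracketed quantity $f_{n+1} = (f_n)^2 + a(f_{n-1})^4$ is negative when $a \le -3$: by Lemma~\ref{power of a}(2) each $f_j \equiv 1 \pmod{|a|}$, so in particular $f_j \ne 0$ and, more importantly, $|a|(f_{n-1})^4 \ge |a| > (f_n)^2$ cannot be concluded naively --- instead I would argue that $f_{n-1} \equiv 1 \pmod{|a|}$ forces $(f_{n-1})^4 \ge 1$ while also controlling $(f_n)^2$ relative to it. The cleanest route is: since $a^{2^n} > 0$, we get $\sgn(p_{n+1}(0)) = \sgn(f_{n+1})$, and one shows $f_{n+1} < 0$ for all $n \ge 1$ by a separate induction on the sequence $(f_n)$, using $f_n \equiv 1 \pmod{|a|}$ to guarantee $(f_{n-1})^4$ is large enough that $a(f_{n-1})^4$ dominates $(f_n)^2$ in absolute value. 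Then $\sgn(p_{n+1}(0)) = -1 = (-1)^{n+1}$.

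For part (2), recall $\beta_n = \prod_{d \mid n} (p_d(0))^{\mu(n/d)}$, so $\sgn(\beta_n) = \prod_{d \mid n} \sgn(p_d(0))^{\mu(n/d)} = \prod_{d \mid n} (-1)^{d\,\mu(n/d)} = (-1)^{N}$ where $N = \sum_{d \mid n} d\,\mu(n/d)$. Now $\sum_{d \mid n} d\,\mu(n/d)$ is the standard Dirichlet convolution $(\mathrm{id} * \mu)(n) = \varphi(n)$, Euler's totient. Hence $\sgn(\beta_n) = (-1)^{\varphi(n)}$, and since $\varphi(n)$ is even for every $n \ge 3$, we get $\sgn(\beta_n) = 1$ for all $n \ge 3$, as claimed. (Only the parity of $N$ matters, so I need not worry that $p_d(0)^{\mu(n/d)}$ involves negative exponents when $\mu(n/d) = -1$ --- the sign contribution is $(-1)^{-d} = (-1)^d$ all the same.)

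The main obstacle is the first induction: showing $f_{n+1} = (f_n)^2 + a(f_{n-1})^4 < 0$ rigorously when $a \le -3$. A bare induction needs a strengthened hypothesis controlling the size of $f_n$, not just its sign --- roughly, one wants something like $1 \le |f_n|$ together with a bound of the form $(f_n)^2 \le |a|(f_{n-1})^4$, or more simply a lower bound on $|f_{n-1}|$ forcing $|a|(f_{n-1})^4$ to exceed $(f_n)^2$. I would set this up by carrying along the auxiliary claim that $|f_n| \ge 1$ (immediate from $f_n \equiv 1 \pmod{|a|}$ and $f_n \ne 0$) and that $(f_n)^2 < |a|(f_{n-1})^4$, proving both simultaneously; the congruence $f_n \equiv 1 \pmod{|a|}$ of Lemma~\ref{power of a}(2) is exactly the extra arithmetic input that makes the size estimate close. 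Everything after that --- part (2) --- is a short formal computation with the totient function.
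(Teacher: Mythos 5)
Your treatment of part~(2) is correct and in fact takes a slightly different route from the paper: you observe $\sgn(\beta_n) = (-1)^{\sum_{d\mid n}d\,\mu(n/d)} = (-1)^{\varphi(n)}$ and use that $\varphi(n)$ is even for $n \ge 3$, whereas the paper invokes the identity $\sum_{d\mid n,\ d\ \mathrm{odd}}\mu(n/d)=0$ from the cited reference. The two are essentially equivalent, but the totient route is more self-contained, which is a genuine improvement.

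Part~(1), however, contains a fatal error. You reduce to showing $\sgn(f_{n+1})=(-1)^{n+1}$ (which is the right reduction, since $a^{2^n}>0$), but then assert that $f_{n+1}=(f_n)^2+a(f_{n-1})^4$ is negative for \emph{all} $n\ge 1$, and conclude $\sgn(p_{n+1}(0))=-1=(-1)^{n+1}$. That final equality is just false when $n$ is odd, and the asserted negativity of the $f_n$ is false as well: with $a=-b$ and $b\ge 3$ one has $f_2 = 1 > 0$ and $f_4 = (1+a)^2+a = b^2-3b+1 \ge 1 > 0$. Indeed the $f_n$ must alternate in sign for $n\ge 2$ exactly because $\sgn(p_n(0))=\sgn(f_n)$ in that range, so any uniform inequality of the form $|a|(f_{n-1})^4 > (f_n)^2$ is impossible --- the required inequality flips direction with the parity of $n$. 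The congruence $f_n\equiv 1\pmod{|a|}$ gives you a lower bound $|f_n|\ge 1$ but nothing resembling the two-sided size control that an induction on $(f_n)$ would need. The paper avoids this entirely by working with the orbit $(\phi^n(0))$ in $\mathbb{R}$: since $\phi(z)=1-b/z^2\le 1$, it shows the orbit is trapped in $(0,1]$ after an even number of steps and in $(-\infty,1-b]$ after an odd number, giving the sign alternation directly without any quantitative estimate on $f_n$.
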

\begin{proof}
Write $a=-b$ for $b \geq 3$, so that $\phi = \frac{z^2-b}{z^2}$. Observe that $\sgn(p_1(0))=\sgn(-b)=-1$, and thus (1) holds for $n = 1$. We have 
$$p_n(0) = \phi^n(0)q_n(0) = \phi^n(0)(p_{n-1}(0))^2$$ for any $n\geq 2$. 
Thus to prove (1) it suffices to show that $\sgn(\phi^n(0)) = (-1)^n$ for $n\geq 2$. Indeed, we will show that $\phi^n(0) > 0$ when $n \geq 2$ is even and $\phi^n(0) \leq 1 - b$ when $n \geq 2$ is odd. 
We have $\phi(z) = 1 - (b/z^2)$, and so $\phi(z_0) \leq 1$ for all $z_0 \in \mathbb{R}$. If $\phi(z_0) > 0$, then 
$$
\phi^2(z_0) = 1 - \frac{b}{\phi(z_0)^2} \leq 1-b.
$$
If $\phi(z_0) \leq 1-b$, then 
\[\phi^2(z_0) =  1 - \frac{b}{\phi(z_0)^2} \geq 1-\frac{b}{(1-b)^2} = \frac{b^2-3b+1}{(1-b)^2},\]
and this last expression is positive since $b \geq 3$. The desired statement now follows by induction and the fact that $\phi^2(0) = 1$ and $\phi(\phi^2(0)) = 1-b$.

To prove part (2) of the lemma, we begin by citing a special case of \cite[Lemma 2.4]{BG}: 
\begin{equation} \label{mob}
\sum_{\substack{d \mid n \\ \text{$d$ odd}}} \mu(n/d) = 0 \quad \text{for $n \geq 3$.}
\end{equation}
It follows from part (1) of the present lemma that
$$
\sgn(\beta_n)
= \prod_{\substack{d|n \\ \text{$d$ odd}}} (-1)^{\mu(n/d)},
$$
and the desired conclusion holds in light of Eq. \ref{mob}.
\end{proof}



We now quote two key results from \cite{galrat}, which will be crucial in the proof of Theorems~\ref{thm:surjectivityA} and \ref{thm:surjectivityB}. The first is a case of \cite[Theorem 3.2]{galrat}, which allows us to compute the polynomial discriminant of $p_n$. We denote the leading coefficient of a polynomial $f$ by $\ell(f)$.

\begin{theorem}[\cite{galrat}]
\label{thm:galrat-Disc}
Let $K$ be a number field, let $\phi = p(z)/q(z)\in K(z)$ have degree $d\geq 2$, where $p,q \in K[z]$ are relatively prime. Let $p_n,q_n$ be as in Eq. \ref{eq:p_n,q_n}, and let $c = qp'-pq'$. Assume that $\phi(\infty)\neq \infty$ and that for some $n \geq 2$, $\phi^n(\infty)\neq 0$ and $\phi^{n-1}(\infty)\neq 0$. Then
\begin{equation}
\label{eq:Disc-non-infty}
\text{Disc}(p_n) = \pm \ell(p_n)^{k_1}\ell(q)^{k_2}\ell(c)^{k_3}(\text{Disc}(p_{n-1}))^d\text{Res}(q,p)^{d^{n-1}(d^{n-1}-2)}\prod_{c(\gamma)=0}p_n(\gamma),
\end{equation}
where
\[k_1 = 2d-2-d_c,\text{ }k_2 = d^{n-1}(d-d_p)(d^{n-1}-2),\text{ and }k_3 = d^n.\]
\end{theorem}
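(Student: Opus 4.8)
The plan is a direct computation using the composition identity $\phi^n=\phi^{n-1}\circ\phi$, which on the level of the chosen numerators reads $p_n=P_{n-1}(p,q)$, where $P_{n-1}\in K[Z,W]$ is the homogenization of $p_{n-1}$ to total degree $d^{n-1}$ (this polynomial identity follows from associativity of polynomial composition). The tools I would use are the resultant form of the discriminant $\text{Disc}(f)=\pm\,\ell(f)^{-1}\text{Res}(f,f')$, the multiplicativity of the resultant, and $\text{Res}(f,g)=\ell(f)^{\deg g}\prod_{f(\beta)=0}g(\beta)=(-1)^{\deg f\deg g}\ell(g)^{\deg f}\prod_{g(\gamma)=0}f(\gamma)$. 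The task thus reduces to computing $\text{Res}(p_n,p_n')$ and collecting leading-coefficient powers.

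First I would extract the degree facts that the hypotheses encode: $\phi(\infty)\neq\infty$ is equivalent to $\deg q=d$; $\phi^{n-1}(\infty)\neq 0$ is equivalent to $\deg p_{n-1}=d^{n-1}$, which lets us factor $P_{n-1}(Z,W)=\ell(p_{n-1})\prod_i(Z-s_iW)$ over the roots $s_i$ of $p_{n-1}$ and hence write $p_n=\ell(p_{n-1})\prod_i(p-s_iq)$; and $\phi^n(\infty)\neq 0$ forces $\phi(\infty)$ (and, in the degenerate case $\deg p<d$, also $0$) not to be a root of $p_{n-1}$, whence $\deg(p-s_iq)=d$ for every $i$, so $\deg p_n=d^n$, and also $\gcd(p_n,q)=1$ (for a pole $\gamma$ of $\phi$ one computes $p_n(\gamma)=\ell(p_{n-1})p(\gamma)^{d^{n-1}}\neq 0$).

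The heart of the matter is to evaluate $p_n'$ on the zero set of $p_n$. By the chain rule, $p_n'=(P_{n-1})_Z(p,q)\,p'+(P_{n-1})_W(p,q)\,q'$. For a root $\beta$ of $p_n$ we have $q(\beta)\neq 0$, so $s:=\phi(\beta)$ is a root of $p_{n-1}$, and Euler's identity $Z(P_{n-1})_Z+W(P_{n-1})_W=d^{n-1}P_{n-1}$ evaluated at $(p(\beta),q(\beta))$ forces the gradient there to be proportional to $(q(\beta),-p(\beta))$, with proportionality constant $q(\beta)^{d^{n-1}-2}\,p_{n-1}'(s)$ (unwinding $P_{n-1}(Z,W)=W^{d^{n-1}}p_{n-1}(Z/W)$). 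The combination of $p'$ and $q'$ that then appears is exactly $q(\beta)p'(\beta)-p(\beta)q'(\beta)=c(\beta)$, so
\[
p_n'(\beta)=q(\beta)^{d^{n-1}-2}\,p_{n-1}'(\phi(\beta))\,c(\beta),\qquad c=qp'-pq'.
\]
Now $\text{Res}(p_n,p_n')=\ell(p_n)^{d^n-1}\prod_{p_n(\beta)=0}p_n'(\beta)$, and I would distribute the product over the three factors above. (i) $\prod_\beta q(\beta)^{d^{n-1}-2}$ is a power of $\text{Res}(p_n,q)$, which (using $p_n(\gamma)=\ell(p_{n-1})p(\gamma)^{d^{n-1}}$ on poles) turns into $\text{Res}(q,p)^{d^{n-1}(d^{n-1}-2)}$ times explicit powers of $\ell(p_n),\ell(q),\ell(p_{n-1})$, the $\ell(q)$-exponent being $k_2=d^{n-1}(d-d_p)(d^{n-1}-2)$. (ii) $\prod_\beta p_{n-1}'(\phi(\beta))$: since the $d$ roots of each factor $p-s_iq$ all map to $s_i$ under $\phi$, this is $\bigl(\prod_i p_{n-1}'(s_i)\bigr)^d$, which equals $\text{Disc}(p_{n-1})^d$ up to a power of $\ell(p_{n-1})$. (iii) $\prod_\beta c(\beta)=\pm\,\ell(p_n)^{-d_c}\ell(c)^{d^n}\prod_{c(\gamma)=0}p_n(\gamma)$. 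Multiplying the three and inserting into $\text{Disc}(p_n)=\pm\,\ell(p_n)^{-1}\text{Res}(p_n,p_n')$: the powers of $\ell(p_{n-1})$ cancel, the power of $\ell(p_n)$ collapses to $2d-2-d_c=k_1$, $\ell(c)$ appears to the power $d^n=k_3$, and what remains is precisely $\text{Disc}(p_{n-1})^d\,\text{Res}(q,p)^{d^{n-1}(d^{n-1}-2)}\prod_{c(\gamma)=0}p_n(\gamma)$.

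The main obstacle is the degree and multiplicity bookkeeping rather than the identity itself: verifying the three degree facts of the second paragraph (this is exactly where all three hypotheses are used, and the degree drop $d-d_p$ must be carried through step (i)), and checking in step (ii) that $p_n=\ell(p_{n-1})\prod_i(p-s_iq)$ records the roots of $p_n$ with the correct multiplicities even when $p_{n-1}$ or $\phi$ is ramified — which is automatic, since the resultant identities hold with multiplicity and $\text{Disc}(p_{n-1})=0$ exactly when $p_n$ is inseparable. The sign accumulated from the resultant symmetries is then immaterial, since the statement only asserts equality up to $\pm$.
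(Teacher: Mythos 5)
Your proposal is correct, and it supplies something the paper itself does not: Theorem \ref{thm:galrat-Disc} is quoted from \cite{galrat} (Theorem 3.2 there) without proof, so there is no in-paper argument to match against. Your derivation is a sound, self-contained reconstruction in the same resultant-theoretic spirit as the cited source. The key identity $p_n'(\beta)=q(\beta)^{d^{n-1}-2}\,p_{n-1}'(\phi(\beta))\,c(\beta)$ for roots $\beta$ of $p_n$ follows exactly as you say from $p_n=P_{n-1}(p,q)$, Euler's relation, and $(P_{n-1})_Z(Z,W)=W^{d^{n-1}-1}p_{n-1}'(Z/W)$ (and it holds without any nonvanishing caveat, since one can solve for $(P_{n-1})_W$ directly from Euler's identity at a zero of $P_{n-1}$). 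Your use of the hypotheses is also exactly right: $\phi(\infty)\neq\infty$ gives $\deg q=d$, $\phi^{n-1}(\infty)\neq 0$ gives $\deg p_{n-1}=d^{n-1}$ and hence the factorization $p_n=\ell(p_{n-1})\prod_i(p-s_iq)$, and $\phi^n(\infty)\neq 0$ forces each factor to have degree $d$, so $\deg p_n=d^n$ and $\gcd(p_n,q)=1$. I verified the bookkeeping: the $\ell(p_{n-1})$ powers from steps (i) and (ii) are $d(d^{n-1}-2)$ and $-d(d^{n-1}-2)$ and cancel; the $\ell(p_n)$ exponent is $(d^n-1)-1-d(d^{n-1}-2)-d_c=2d-2-d_c=k_1$; the $\ell(q)$ exponent is $d^{n-1}(d-d_p)(d^{n-1}-2)=k_2$ via $\text{Res}(q,p_n)=\ell(q)^{d^{n-1}(d-d_p)}\ell(p_{n-1})^d\,\text{Res}(q,p)^{d^{n-1}}$; and $\ell(c)$ appears to the power $d^n=k_3$. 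As a sanity check, specializing to $p=z^2+a$, $q=z^2$, $c=-2az$ reproduces Eq. \ref{eq:Disc(p_n)} of the paper with the correct exponents. Two minor reading conventions you implicitly adopt and should state: the product $\prod_{c(\gamma)=0}p_n(\gamma)$ is over roots of $c$ counted with multiplicity (as in the paper's application, where $c$ has the single simple root $0$), and all resultant symmetries only contribute signs, which the $\pm$ absorbs.
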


The second result we require is \cite[Theorem 3.7]{galrat}, which gives a necessary and sufficient condition for $[K_n:K_{n-1}] = 2^{2^{n-1}}$ in the case where $\phi$ is quadratic.

\begin{theorem}[\cite{galrat}]
\label{thm:galrat-maximality}
Let $K$ be a number field, let $\phi = p(z)/q(z)\in K(z)$ have degree $2$, and let $p_n, q_n$ be as in Eq. \ref{eq:p_n,q_n}. Let $n \geq 2$, let $\ell(p_{n-1})$ be the leading coefficient of $p_{n-1}$, let $\gamma_1, \gamma_2\in \mathbb{P}^1(\overline{K})$ be the critical points of $\phi$, and without loss say $\phi(\gamma_1)\neq \infty$. Assume that $\phi^n(\infty)\neq 0$, $\phi^{n-1}(\infty)\neq 0$, and $p_{n-1}$ is irreducible in $K[x]$.  If $\phi(\gamma_2)$ is not (resp. is) $\infty$, then $[K_n:K_{n-1}] = 2^{2^{n-1}}$ if and only if
\begin{equation}
p_{n-1}(\phi(\gamma_1))p_{n-1}(\phi(\gamma_2))\notin K_{n-1}^{\times 2}\text{ }(\text{resp. }\ell(p_{n-1})p_{n-1}(\phi(\gamma_1))\notin K_{n-1}^{\times 2}).
\end{equation}
\end{theorem}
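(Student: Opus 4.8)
The plan is to exhibit $K_n$ as a multiquadratic extension of $K_{n-1}$ and then pin down the relevant square classes, following the template of the discriminant criterion of Stoll and Jones in the quadratic polynomial case.

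\emph{Step 1: realize $K_n/K_{n-1}$ as multiquadratic.} Write $\phi = p/q$ with $p,q\in K[z]$ of degree $2$ and $\gcd(p,q)=1$. Since $p_{n-1}$ is irreducible over the number field $K$ it is separable, so it has $2^{n-1}$ distinct roots, all lying in $K_{n-1}$; the hypothesis $\phi^{n-1}(\infty)\neq 0$ forces $\deg p_{n-1}=2^{n-1}$, so none of these roots is $\infty$, and $\phi^n(\infty)\neq 0$ forces $\beta\neq\phi(\infty)$ for each root $\beta$, so that $p(z)-\beta q(z)$ is a genuine quadratic in $z$ with root set $\phi^{-1}(\beta)$. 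Hence $\phi^{-1}(\beta)\subseteq K_{n-1}(\sqrt{\Delta_\beta})$ for $\Delta_\beta:=\mathrm{disc}_z\bigl(p(z)-\beta q(z)\bigr)$, and conversely $\sqrt{\Delta_\beta}\in K_n$; therefore $K_n=K_{n-1}\bigl(\sqrt{\Delta_\beta}: p_{n-1}(\beta)=0\bigr)$. Consequently $[K_n:K_{n-1}]=2^{\dim_{\mathbb{F}_2}W}$, where $W\subseteq K_{n-1}^\times/K_{n-1}^{\times 2}$ is spanned by the classes $\overline{\Delta_\beta}$, and maximality is equivalent to $\mathbb{F}_2$-linear independence of these $2^{n-1}$ classes.

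\emph{Step 2: compute $\Delta_\beta$ and its product modulo squares.} As a polynomial in $\beta$, $\Delta_\beta$ vanishes precisely when $p(z)-\beta q(z)$ has a repeated root, i.e.\ when $\beta$ is a critical value of $\phi$; by Corollary~\ref{cor: RH} the critical values are $\phi(\gamma_1)$ and $\phi(\gamma_2)$. If $\phi(\gamma_2)\neq\infty$ then $\mathrm{disc}(q)\neq 0$ and $\Delta_\beta=\mathrm{disc}(q)\,(\beta-\phi(\gamma_1))(\beta-\phi(\gamma_2))$; if $\phi(\gamma_2)=\infty$ then $\mathrm{disc}(q)=0$, $\Delta_\beta$ is linear in $\beta$, and $\Delta_\beta=u\,(\beta-\phi(\gamma_1))$ for some $u\in K^\times$. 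Using $\prod_{p_{n-1}(\beta)=0}(\beta-c)=(-1)^{2^{n-1}}p_{n-1}(c)/\ell(p_{n-1})=p_{n-1}(c)/\ell(p_{n-1})$ (as $n\geq 2$), and that $\mathrm{disc}(q)^{2^{n-1}}$, $u^{2^{n-1}}$ and $\ell(p_{n-1})^2$ are squares, one gets
\[
\prod_\beta \Delta_\beta \;\equiv\; p_{n-1}(\phi(\gamma_1))\,p_{n-1}(\phi(\gamma_2)) \qquad\bigl(\text{resp. }\ell(p_{n-1})\,p_{n-1}(\phi(\gamma_1))\bigr)\pmod{K_{n-1}^{\times 2}},
\]
which is exactly the quantity appearing in the theorem; the $\ell(p_{n-1})$ in the exceptional case reflects the drop in degree of $\Delta_\beta$ as a polynomial in $\beta$.

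\emph{Step 3: the representation-theoretic core, and the main obstacle.} Because $\Delta_\beta\in K[\beta]$, each $\sigma\in G_{n-1}=\Gal(K_{n-1}/K)$ sends $\Delta_\beta$ to $\Delta_{\sigma(\beta)}$, so the $\mathbb{F}_2$-linear map $\pi:\mathbb{F}_2^{\phi^{-(n-1)}(0)}\to K_{n-1}^\times/K_{n-1}^{\times 2}$, $e_\beta\mapsto\overline{\Delta_\beta}$, is $G_{n-1}$-equivariant, with $G_{n-1}$ permuting the basis $\{e_\beta\}$ transitively (by irreducibility of $p_{n-1}$). Moreover $G_{n-1}$ is a $2$-group, being a subgroup of $\text{Aut}$ of the truncated binary rooted tree at level $n-1$, an iterated wreath product of $C_2$. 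Now invoke the elementary fact: \emph{if a finite $p$-group $G$ acts transitively on a set $X$, then the $\mathbb{F}_p$-permutation module $\mathbb{F}_p[X]$ has a unique minimal submodule, namely $\mathbb{F}_p\cdot\mathbf{1}$ with $\mathbf{1}=\sum_{x\in X}x$} --- any minimal submodule is a simple $\mathbb{F}_p[G]$-module, hence trivial since $G$ is a $p$-group, hence contained in the $G$-fixed subspace, which for a transitive action is the one-dimensional space of constants. Applying this to $\ker\pi$: the $\overline{\Delta_\beta}$ are independent $\iff$ $\ker\pi=0$ $\iff$ $\mathbf{1}\notin\ker\pi$ $\iff$ $\prod_\beta\Delta_\beta\notin K_{n-1}^{\times 2}$, which with Step~2 finishes the proof. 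I expect this step to be the conceptual heart: the reduction from ``all $2^{n-1}$ square classes independent'' to the single product condition genuinely uses that $G_{n-1}$ is a $2$-group (it fails for a general transitive group, e.g.\ $S_3$ on three points), whereas once the $p$-group permutation-module lemma is isolated the argument is short, the remaining work in Steps~1--2 being bookkeeping with the non-degeneracy hypotheses $\phi^n(\infty)\neq 0$, $\phi^{n-1}(\infty)\neq 0$ and the square classes producing the $\ell(p_{n-1})$ correction.
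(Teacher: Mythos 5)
Your proof is correct, and it is essentially the argument behind this result in \cite{galrat} (the present paper does not reprove it, but simply quotes \cite[Theorem 3.7]{galrat}): write $K_n=K_{n-1}\bigl(\sqrt{\Delta_\beta}\,:\,p_{n-1}(\beta)=0\bigr)$, reduce independence of the classes of the $\Delta_\beta$ to the non-squareness of their product via the transitive $2$-group permutation-module lemma (Stoll's device), and identify $\prod_\beta\Delta_\beta$ modulo squares with $p_{n-1}(\phi(\gamma_1))p_{n-1}(\phi(\gamma_2))$, resp. $\ell(p_{n-1})p_{n-1}(\phi(\gamma_1))$. The only caveat, which your argument shares with the statement as quoted here, is the implicit requirement $p_{n-1}(\phi(\gamma_i))\neq 0$ (so that every $\Delta_\beta$ is nonzero and the Kummer count applies); this is automatic for $n\geq 3$, since critical values of $\phi$ have degree at most $2$ over $K$ while $p_{n-1}$ is irreducible of degree $2^{n-1}$.
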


We now set about applying Theorems \ref{thm:galrat-Disc} and \ref{thm:galrat-maximality} to $\phi(z) = (z^2 + a)/z^2$. Let $\ell(p_n)$ denote the leading coefficient of $p_n$, and recall the definition of $(f_n)_{n \geq 1}$ and $(\theta_n)_{n \geq 1}$ on p. \pageref{betadef}.

\begin{lemma}
\label{lemma:ell(p_n)}
We have
\begin{enumerate}
\item $\ell(p_n) = f_{n+1}$ for any $n\geq 1$.
\item $p_n(1) = f_{n+2}$ for any $n\geq 1$.
\end{enumerate}
\end{lemma}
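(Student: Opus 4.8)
The plan is to prove both identities by induction using the recursion (\ref{eq:p_n,q_n mainfamily}) for $p_n, q_n$ together with the recursion (\ref{fngn}) for $f_n$, leveraging the structural fact from Lemma \ref{power of a}(1) that $p_n(0) = a^{2^{n-1}} f_n$ and, crucially, the observation that $q_n = p_{n-1}^2$ for $n \geq 2$ while $q_1 = z^2$.

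For part (1), I would first pin down the leading-coefficient recursion. Since $\deg p_n = \deg q_n = 2^n$ for the map $(z^2+a)/z^2$ (one checks $\deg p_n = 2^n$ because $0$ is not periodic, or more elementarily by tracking leading terms), the relation $p_n = p_{n-1}^2 + a q_{n-1}^2$ gives $\ell(p_n) = \ell(p_{n-1})^2 + a\,\ell(q_{n-1})^2$ provided the two summands have the same degree, which they do. Then $\ell(q_{n-1}) = \ell(p_{n-2})^2$ for $n-1 \geq 2$ (from $q_{n-1} = p_{n-2}^2$), so $\ell(p_n) = \ell(p_{n-1})^2 + a\,\ell(p_{n-2})^4$. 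This is exactly the recursion (\ref{fngn}) defining $f_{n+1}$ in terms of $f_n$ and $f_{n-1}$. So I check the base cases: $\ell(p_1) = \ell(z^2+a) = 1 = f_2$, and $\ell(p_2) = \ell(p_1^2 + a q_1^2) = \ell((z^2+a)^2 + a z^4) = 1 + a = f_3$ (using $f_3 = f_2^2 + a f_1^4 = 1 + a$). The induction then carries through, with care taken that the degree-matching needed to add leading coefficients holds at each stage — this is the one place where I need $\deg p_n = 2^n$ rather than a drop in degree, and it follows since $\ell(q_{n-1}) = f_{n-1} \neq 0$ would only fail if some $f_{n-1}$ vanished, which is excluded under the running hypothesis $a \neq 0$ combined with Lemma \ref{power of a}(2) giving $f_n \equiv 1 \pmod{|a|}$, hence $f_n \neq 0$.

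For part (2), the cleanest route is again induction on $n$ via (\ref{eq:p_n,q_n mainfamily}): $p_n(1) = p_{n-1}(1)^2 + a\,q_{n-1}(1)^2$, and $q_{n-1}(1) = p_{n-2}(1)^2$ for $n-1 \geq 2$ while $q_1(1) = 1$. So for $n \geq 3$ we get $p_n(1) = p_{n-1}(1)^2 + a\,p_{n-2}(1)^4$, which is the recursion for $f_{n+2}$. Base cases: $p_1(1) = 1 + a = f_3$, and $p_2(1) = p_1(1)^2 + a q_1(1)^2 = (1+a)^2 + a = f_4$ (checking $f_4 = f_3^2 + a f_2^4 = (1+a)^2 + a$). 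The induction then closes. Alternatively, part (2) is essentially the statement $p_n(1) = \ell(p_{n+2})$ combined with part (1); one could note $p_{n+2}(z) = p_{n+1}(z)^2 + a p_n(z)^4$ and extract the leading coefficient in terms of $\ell(p_{n+1})$ and $\ell(p_n)$, but this reshuffling is no shorter than the direct induction.

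I do not expect a genuine obstacle here; the only subtlety worth stating explicitly is the degree bookkeeping in part (1) — confirming that $\deg p_{n-1}^2 = \deg (a q_{n-1}^2) = 2^n$ so that leading coefficients genuinely add rather than one term swamping the other — and this is handled by the nonvanishing of $f_{n}$ noted above. Both parts reduce to matching two three-term recursions with shifted indices, so after verifying the base cases the inductive step is immediate.
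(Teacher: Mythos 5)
Your proof is correct and follows essentially the same route as the paper's: check the base cases $\ell(p_1)=f_2$, $\ell(p_2)=f_3$ (resp.\ $p_1(1)=f_3$, $p_2(1)=f_4$) and match the recursion $\ell(p_n)=\ell(p_{n-1})^2+a\,\ell(p_{n-2})^4$ (resp.\ the same identity evaluated at $z=1$) against Eq.~\ref{fngn}. The degree bookkeeping you rightly flag is simply asserted in the paper ($\deg p_n=\deg q_n$ for all $n$); note only that your nonvanishing argument via $f_n\equiv 1 \pmod{|a|}$ requires $|a|\geq 2$, which holds in every application of the lemma (e.g.\ $a\equiv 2\pmod 4$), and indeed part (1) can fail for $a=-1$.
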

\begin{proof}
One computes $\ell(p_1) = 1 = f_2$ and $\ell(p_2) = a+1 = f_3$. Since $\deg(p_n) = \deg(q_n)$ for $n\geq 1$, Eq.~\ref{eq:p_n,q_n mainfamily} gives
\begin{align*}
\ell(p_n) = (\ell(p_{n-1}))^2 + a(\ell(q_{n-1}))^2
= (\ell(p_{n-1}))^2 + a(\ell(p_{n-2}))^4
\end{align*}
for $n\geq 3$. Condition (1) of the present lemma follows using induction and the definition of $f_n$. The proof of condition (2) is similar. 
\end{proof}

\begin{lemma}
\label{lemma:sufficient-theta_n} 
Suppose that for some $n \geq 2$, $p_{n-1}$ is irreducible over $\mathbb{Q}$. If $|\theta_{n+1}|$ is not a square in $\mathbb{Q}$, then $[K_n:K_{n-1}]=2^{2^{n-1}}$. 
\end{lemma}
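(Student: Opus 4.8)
The plan is to apply Theorem~\ref{thm:galrat-maximality} with $\phi(z) = (z^2+a)/z^2$, whose critical points are $\gamma_1 = 0$ and $\gamma_2 = \infty$, with $\phi(\gamma_1) = \phi(0) = \infty$ and $\phi(\gamma_2) = \phi(\infty) = 1$. Since $\phi(\gamma_1) = \infty$ here, we are in the ``resp.'' case of that theorem (after relabeling so that the critical point whose image is finite plays the role of $\gamma_1$ in the theorem statement): thus, assuming $p_{n-1}$ is irreducible over $\Q$ and the hypotheses $\phi^n(\infty) \neq 0$, $\phi^{n-1}(\infty) \neq 0$ hold, we get $[K_n:K_{n-1}] = 2^{2^{n-1}}$ if and only if $\ell(p_{n-1})\, p_{n-1}(1) \notin K_{n-1}^{\times 2}$. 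First I would dispose of the side hypotheses: $\phi^i(\infty) = 0$ would force $p_i(0) = 0$, but by Lemma~\ref{power of a}(1) we have $p_i(0) = a^{2^{i-1}} f_i$ with $f_i \equiv 1 \pmod{|a|}$, so $p_i(0) \neq 0$ for all $i$ (note $a \neq 0$ is implicit, since otherwise $\phi$ is not bicritical in the relevant sense; this should be recorded or follows from the ambient hypotheses).

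Next I would identify the quantity $\ell(p_{n-1})\, p_{n-1}(1)$ using Lemma~\ref{lemma:ell(p_n)}: we have $\ell(p_{n-1}) = f_n$ and $p_{n-1}(1) = f_{n+1}$, so $\ell(p_{n-1})\, p_{n-1}(1) = f_n f_{n+1}$. Thus it suffices to show that if $|\theta_{n+1}|$ is not a square in $\Q$, then $f_n f_{n+1} \notin K_{n-1}^{\times 2}$. The standard reduction here (as in the Odoni--Stoll circle of ideas) is that it is enough to show $f_n f_{n+1}$ is not a square in $\Q$ itself, or more precisely that its image in $\Q^\times/\Q^{\times 2}$ is not a square-class already trivialized in $K_{n-1}$ — but the cleanest route, and the one matching the structure of the paper, is to pass to the Möbius-inverted quantities. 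The key algebraic fact is that $\theta_{n+1} = \prod_{d \mid n+1} f_d^{\mu((n+1)/d)}$, and I would show that modulo $\Q^{\times 2}$ the product $f_n f_{n+1}$ agrees with $\prod_{k} \theta_k$ over an appropriate set of $k$ — more directly, since $(f_n)_{n\geq 1}$ is a rigid divisibility sequence by Lemma~\ref{power of a}(3), the square-class of any $f_n f_{n+1}$ is controlled by the square-classes of the $\theta_k$ via Möbius inversion: writing $f_n = \prod_{d \mid n} \theta_d$ (the defining relation inverted), we get $f_n f_{n+1} = \prod_{d \mid n}\theta_d \cdot \prod_{d \mid n+1}\theta_d$, and since $\gcd(n, n+1) = 1$ the only $\theta_d$ appearing to an odd power on the right are $\theta_d$ with $d \mid n$ or $d \mid n+1$ but not both — in particular $\theta_{n+1}$ appears exactly once (to the first power), coming only from the divisor $d = n+1$ of $n+1$.

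The crux, then, is to combine this with the rigid divisibility property to show that $\theta_{n+1}$ being non-square forces $f_n f_{n+1}$ to be non-square, even after the base change to $K_{n-1}$. The rigid divisibility property implies that for each prime $\ell$, at most a controlled set of the $\theta_d$ are divisible by $\ell$, and — this is the essential point — there is a prime $\ell$ dividing $|\theta_{n+1}|$ to an odd power, and by rigid divisibility such $\ell$ does not divide $\theta_d$ for $d \neq n+1$ with $d \mid n$ or $d \mid (n+1)$ (since $\ell \mid \theta_{n+1} \mid f_{n+1}$ would force $\ell \mid f_{\gcd}$ for smaller indices, contradicting how $\theta$'s are built, or more carefully: $v_\ell(\theta_d) = 0$ for $d < n+1$ once $v_\ell(\theta_{n+1})$ is odd and positive). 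Hence $v_\ell(f_n f_{n+1})$ is odd, so $f_n f_{n+1} \notin \Q^{\times 2}$, and in fact $\ell$ is unramified in $K_{n-1}$ (one needs that $\ell$ is prime to the discriminant of $p_{n-1}$, which is where Theorem~\ref{thm:galrat-Disc} and the rigid divisibility of the relevant sequences enter — the discriminant of $p_{n-1}$ is built from earlier $f_i$'s and $\mathrm{Res}(q,p) = a^2$, and $\ell \nmid a$ because $f_{n+1} \equiv 1 \pmod{|a|}$), so that the odd valuation persists and $f_n f_{n+1} \notin K_{n-1}^{\times 2}$. I expect the main obstacle to be precisely this last point: carefully extracting a prime witnessing the non-square class of $\theta_{n+1}$, verifying via rigid divisibility that it does not contaminate the other $\theta_d$ factors, and checking it is unramified (or at least has odd valuation on some prime above it) in the big field $K_{n-1}$ — this requires marrying Lemma~\ref{power of a}(3), Lemma~\ref{lemma:ell(p_n)}, and the discriminant formula of Theorem~\ref{thm:galrat-Disc} to bound ramification. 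The absolute value $|\cdot|$ on $\theta_{n+1}$ (i.e. taking $|\theta_{n+1}|$ rather than $\theta_{n+1}$) is a sign bookkeeping device handled by Lemma~\ref{lemma:sign of p_n}, ensuring we compare the correct positive representatives.
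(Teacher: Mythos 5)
Your proposal is correct and follows essentially the same route as the paper: apply Theorem~\ref{thm:galrat-maximality} to $\ell(p_{n-1})\,p_{n-1}(1) = f_n f_{n+1}$ (via Lemma~\ref{lemma:ell(p_n)}), extract a prime with odd valuation in $\theta_{n+1}$, use rigid divisibility to keep it away from the earlier terms, and invoke the discriminant formula of Theorem~\ref{thm:galrat-Disc} to see the prime is unramified in $K_{n-1}$, so the odd valuation persists. The one point to tighten is that the rigid-divisibility/M\"obius computation in fact shows your prime divides no $f_i$ with $i \le n$ (not merely no $\theta_d$ with $d \mid n$ or $d \mid n+1$), which is exactly what the inductive use of the discriminant formula requires, together with the observation that the prime is odd because it is coprime to the even integer $a$.
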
 

\begin{proof}
We apply Theorem \ref{thm:galrat-Disc} with $p(z) = z^2 + a$ and $q(z) = z^2$. Observe that the theorem applies because by condition (1) of Lemma \ref{lemma:sign of p_n} we have $\phi^i(0) \neq 0$ for all $i \geq 1$, and then from $\phi(0) = \infty$ it follows that $\phi^i(\infty) \neq 0$ for all $i$.
Using the notation of Theorem \ref{thm:galrat-Disc}, we have $c(z) = -2az$, and thus the only root of $c$ is zero (the other critical point of $\phi$ is $\infty$). 
We have $k_1=1$, $k_2=0$, $k_3=2^n$, $\ell(c)=-2a$, and $\text{Res}(q,p) = a^2$. Therefore, using Eq.~\ref{eq:Disc-non-infty} and simplifying, we obtain
\[\text{Disc}(p_n) = \pm 2^{2^n}a^{2^{2n-1}-2^n}(\text{Disc}(p_{n-1}))^2\ell(p_n)p_n(0).\]
Using part (1) of Lemma~\ref{power of a}, part (1) of Lemma~\ref{lemma:ell(p_n)}, and simplifying, we obtain
\begin{equation}
\label{eq:Disc(p_n)}
\text{Disc}(p_n) = \pm 2^{2^n}a^{2^{2n-1}-2^{n-1}}(\text{Disc}(p_{n-1}))^2f_{n+1}f_n.
\end{equation}
Suppose that there exists a rational prime $p$ such that 
\begin{equation}
\label{eq:odd valuation p}
\gcd(p,2)=\gcd(p,a)= 1, \; \; \gcd(p,f_i)=1 \text{ for all $i \leq n$, }\text{ and }v_p(f_{n+1})\text{ is odd},
\end{equation}
where $v_p$ denotes the $p$-adic valuation. We first show that the existence of such a prime would be enough to prove the equality $[K_n:K_{n-1}]=2^{2^{n-1}}$. Using Eq.~\ref{eq:Disc(p_n)} inductively, the first three of the conditions in Eq.~\ref{eq:odd valuation p} imply that $v_p(\text{Disc}(p_{n-1}))=0$, so $p$ does not ramify in $K_{n-1}$. Hence, there is a prime $\mathfrak{q}$ in the ring of integers of $K_{n-1}$ with $v_{\mathfrak{q}}(p) = 1$. It follows that $v_{\mathfrak{q}}(f_nf_{n+1}) = 1$, whence $f_nf_{n+1}$ cannot be a square in $K_{n-1}$. Thus, using Lemma~\ref{lemma:ell(p_n)}, we conclude that
\[ f_nf_{n+1}= \ell(p_{n-1})p_{n-1}(1) = \ell(p_{n-1})p_{n-1}(\phi(\infty))\]
is not a square in $K_{n-1}$. 
We then apply Theorem~\ref{thm:galrat-maximality}, noting that $\gamma_1 = \infty$, $\gamma_2 = 0$, and $\phi(\gamma_2) = \infty$. This gives $[K_n:K_{n-1}]=2^{2^{n-1}}$. Hence, it remains to show the existence of a prime $p$ that satisfies the conditions in Eq.~\ref{eq:odd valuation p}.\par

By Lemma \ref{power of a}, $(f_n)_{n\geq 1}$ is a rigid divisibility sequence. This and the  definition of $\theta_{n+1}$ as the ``primitive part" of $f_{n+1}$ ensure that $\gcd(\theta_{n+1}, f_i) = 1$ for any $i\leq n$. We recall a proof: let $q \in \Z$ be a prime divisor of $\theta_{n+1}$, let $m$ be the minimal positive integer with $q \mid f_m$, let $e = v_q(f_m)$, and let $m' = (n+1)/m$. From condition (2) of Definition \ref{def:rigid} we have $m \mid (n+1)$, whence $m'$ is a positive integer dividing $n+1$. By rigid divisibility we get
\begin{equation} \label{eq:valuation}
v_q(\theta_{n+1}) = e\sum_{rm \mid (n+1)}  \mu\left(\frac{n+1}{rm} \right) = e \sum_{r \mid m'} \mu(r) = \begin{cases}
    e \quad \text{if $m' = 1$} \\
    0 \quad \text{if $m' > 1$}
\end{cases}
\end{equation}
Therefore $v_q(\theta_{n+1}) = 0$ unless $m = n+1$, showing that $\gcd(\theta_{n+1}, f_i) = 1$ for any $i\leq n$.
Since $|\theta_{n+1}|$ is not a square by assumption, there exists a prime $p$ such that $v_p(\theta_{n+1})$ is odd, and $v_p(\theta_{n+1}) =v_p(f_{n+1})$ by Eq. \ref{eq:valuation}. Note that $\gcd(p,a) = 1$ by condition (2) of Lemma \ref{power of a}. Because $a$ is even by hypothesis, this also gives $\gcd(p,2) = 1$. Thus $p$ satisfies all four conditions in Eq.~\ref{eq:odd valuation p}. 
\end{proof}
The following lemma describes, for certain values of $a$, the multiplicative relation between $\beta_n$ and $\theta_n$ in the quotient $\Q^{\times}/\Q^{\times2}$.
\begin{lemma}
\label{lemma:beta_n iff theta_n}
Assume that $a \leq -3$.
Then the following hold.
\begin{enumerate}
\item If $n \geq 3$ is square-free, then $|\theta_n|$ is a square in $\mathbb{Q}$ if and only if $-a\beta_n$ is a square in $\mathbb{Q}$.
\item If $n \geq 3$ is not square-free, then $|\theta_n|$ is a square in $\mathbb{Q}$ if and only if $\beta_n$ is a square in $\mathbb{Q}$.
\end{enumerate}
\begin{proof}
By definitions and using Lemma~\ref{power of a}, we have
\begin{align*}
\beta_n = \prod_{d|n}(p_d(0))^{\mu(n/d)}
= \prod_{d|n}(a^{2^{d-1}}f_d)^{\mu(n/d)}
= \bigg(\prod_{d|n}a^{2^{d-1}\mu(n/d)}\bigg)\theta_n.
\end{align*}
Set $C_n = \prod_{d|n}a^{2^{d-1}\mu(n/d)}$, and note that $C_n$ is $a^{\mu(n)}$ times a square in $\Q$.  If $n$ is square-free, we thus have $\overline{C_n} = \overline{a}$, and since $\beta_n>0$ by Lemma~\ref{lemma:sign of p_n} (here we use $n \geq 3)$, we conclude $\theta_n<0$. It follows that $|\theta_n|$ is a square in $\mathbb{Q}$ if and only if $-a\beta_n$ is a square in $\mathbb{Q}$, proving (1). If $n$ is not square-free, then $C_n$ is a square in $\mathbb{Q}$, implying $\theta_n>0$ (since $\beta_n>0$ again by Lemma~\ref{lemma:sign of p_n}). It follows that $|\theta_n|$ is a square in $\mathbb{Q}$ if and only if $\beta_n$ is a square in $\mathbb{Q}$, proving (2). 
\end{proof}
\end{lemma}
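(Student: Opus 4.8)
The plan is to relate $\beta_n$ and $\theta_n$ via an explicit power of $a$, determine the parity of its exponent, and then reconcile the signs using Lemma~\ref{lemma:sign of p_n}. First I would apply part~(1) of Lemma~\ref{power of a} to write $p_d(0) = a^{2^{d-1}}f_d$, so that $\beta_n = \prod_{d\mid n}(a^{2^{d-1}}f_d)^{\mu(n/d)} = C_n\theta_n$, where $C_n = a^{E_n}$ with $E_n = \sum_{d\mid n}2^{d-1}\mu(n/d)$. The elementary but crucial observation is that every summand of $E_n$ with $d\geq 2$ is even, so $E_n \equiv \mu(n)\pmod 2$: hence $C_n$ equals $a$ times a rational square when $n$ is square-free, and $C_n$ is a rational square when $n$ is not square-free. (All the $f_d$ occurring here are nonzero by part~(1) of Lemma~\ref{lemma:sign of p_n} and $a\neq 0$, so $\theta_n$, $\beta_n$, and $C_n$ are well-defined nonzero rationals.)

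Next I would pin down the sign of $\theta_n$ using $\beta_n = C_n\theta_n$. Since $n\geq 3$, part~(2) of Lemma~\ref{lemma:sign of p_n} gives $\beta_n>0$. If $n$ is square-free then $E_n$ is odd, so $\sgn(C_n) = \sgn(a) = -1$ because $a\leq -3$; combined with $\beta_n>0$ this forces $\theta_n<0$, hence $|\theta_n| = -\theta_n$. If $n$ is not square-free then $C_n>0$, so $\theta_n>0$ and $|\theta_n| = \theta_n$.

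Finally I would read off both equivalences in $\Q^{\times}/\Q^{\times 2}$. Writing $\overline{x}$ for the square class of $x$, the identity $\beta_n = C_n\theta_n$ gives $\overline{\theta_n} = \overline{C_n}\,\overline{\beta_n}$. When $n$ is square-free this is $\overline{a}\,\overline{\beta_n}$, so $\overline{|\theta_n|} = \overline{-\theta_n} = \overline{-1}\cdot\overline{a}\cdot\overline{\beta_n} = \overline{-a\beta_n}$, which is exactly part~(1). When $n$ is not square-free, $\overline{C_n} = \overline{1}$, so $\overline{|\theta_n|} = \overline{\theta_n} = \overline{\beta_n}$, which is part~(2). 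The argument is essentially bookkeeping; the only substantive inputs are the identity $p_d(0) = a^{2^{d-1}}f_d$ and the positivity $\beta_n>0$ for $n\geq 3$, and the one step requiring care is the parity computation $E_n\equiv\mu(n)\pmod 2$ together with tracking which case makes $\theta_n$ negative. I expect no genuine obstacle.
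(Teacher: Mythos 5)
Your proposal is correct and follows essentially the same route as the paper: write $\beta_n = C_n\theta_n$ with $C_n = \prod_{d\mid n}a^{2^{d-1}\mu(n/d)}$ via Lemma~\ref{power of a}, observe that $C_n$ lies in the square class of $a^{\mu(n)}$, and use $\beta_n>0$ from Lemma~\ref{lemma:sign of p_n} to fix the sign of $\theta_n$ in each case. The only difference is presentational (you make the parity computation $E_n\equiv\mu(n)\pmod 2$ and the square-class bookkeeping fully explicit), so there is nothing to add.
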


The following lemma is a key tool in the proof of Theorem~\ref{thm:surjectivityA}. We obtain it by adapting arguments of Stoll \cite[Lemmas 2.1 and 2.2]{stoll}.  Recall that the radical of an integer $n$ is product of its distinct prime divisors, written $\text{rad}(n)$.
\begin{lemma}
\label{lemma:rad-divisibility}
Let $\phi = p(z)/q(z)\in \Q(z)$ have degree $d\geq 2$, where $p,q \in \Z[z]$ are relatively prime. Assume that $p$ and $q$ are even polynomials and $q$ is the square of a polynomial in $\Z[z]$. Let $p_n, q_n$ be as in Eq. \ref{eq:p_n,q_n}, and let 
$$\beta_{\alpha, n} = \prod_{d \mid n}(p_d(\alpha))^{\mu(n/d)}.$$
Let $n\geq 2$ be a positive integer and let   $k=\frac{n}{\text{rad}(n)}$. 
Assume that $\alpha\in \mathbb{Q}$ satisfies $\phi^i(\alpha)\neq \infty$ for all $i\geq k$ and $\phi^i(\alpha)\neq 0$ for all $i\geq 1$. Suppose that there exists a positive integer $m$ such that:
\begin{enumerate}
    \item $v_{\ell}(\phi^k(\alpha)) = 0$ for every prime divisor $\ell$ of $m$.
    \item $\phi^k(\alpha) \equiv - \phi^{k+1}(\alpha) \pmod{m}$,
    \item $-1$ is not a square modulo $m$,

\end{enumerate}
Then $\beta_{\alpha, n}$
is not a square in $\mathbb{Q}$.
\end{lemma}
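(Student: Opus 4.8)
The plan is to show that $\beta_{\alpha,n}$ fails to be a square by exhibiting a prime $p$ (dividing $m$) whose exponent in $\beta_{\alpha,n}$ is forced to be odd. First I would unwind the definition
$$
\beta_{\alpha,n} = \prod_{d\mid n}(p_d(\alpha))^{\mu(n/d)},
$$
and observe that the only $d$ contributing are the squarefree-complementary ones, namely $d\in\{\operatorname{rad}(n),\,n/\ell : \ell\mid\operatorname{rad}(n)\}$ and their ``even'' partners — more precisely, since $\mu(n/d)\neq 0$ forces $n/d$ squarefree, the relevant $d$ are exactly $k\cdot e$ with $e\mid\operatorname{rad}(n)$, where $k=n/\operatorname{rad}(n)$. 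Writing $r=\operatorname{rad}(n)$ and grouping, $\beta_{\alpha,n}=\prod_{e\mid r}(p_{ke}(\alpha))^{\mu(r/e)}$. The key arithmetic input is a relationship between $p_{ke}(\alpha)$ and $\phi^k(\alpha)$ modulo a prime $p\mid m$: by hypothesis (1), $v_p(\phi^k(\alpha))=0$, so $\phi^k(\alpha)$ is a $p$-adic unit; and hypothesis (2) says $\phi^{k+1}(\alpha)\equiv -\phi^k(\alpha)\pmod m$. The intended reading — following Stoll's Lemmas 2.1--2.2 — is that iterating $\phi$ starting from $\phi^k(\alpha)$ modulo $p$ lands in the $2$-cycle $\{\phi^k(\alpha),-\phi^k(\alpha)\}$ (using that $p,q$ are even, so $\phi(-z)=\phi(z)$, hence $\phi^{k+2}(\alpha)\equiv\phi(\phi^{k+1}(\alpha))=\phi(-\phi^k(\alpha))=\phi(\phi^k(\alpha))=\phi^{k+1}(\alpha)\pmod p$ — wait, that gives a fixed point, so actually the orbit of $\phi^k(\alpha)$ mod $p$ is eventually constant equal to $\phi^{k+1}(\alpha)$).

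Granting that, for $e\geq 2$ we have $ke\geq 2k\geq k+1$ (when $k\geq 1$), hence $p_{ke}(\alpha)/q_{ke}(\alpha)=\phi^{ke}(\alpha)\equiv\phi^{k+1}(\alpha)\equiv -\phi^k(\alpha)\pmod p$; combined with the good-reduction/unit bookkeeping (using that $q$ is a square and $p,q$ are normalized away from finitely many primes, and arguing as in Theorem~\ref{thm:rigid} that $q_{ke}(\alpha)$ is a $p$-adic unit) this pins down $v_p(p_{ke}(\alpha))=0$ and the \emph{value} of $p_{ke}(\alpha)$ mod $p$ for all $e\mid r$ with $e>1$. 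Only the term $e=1$, i.e. $p_k(\alpha)$, behaves differently: there $\phi^k(\alpha)$ itself is the relevant unit, congruent to $+\phi^k(\alpha)$, not $-\phi^k(\alpha)$. Now compute the image $\overline{\beta_{\alpha,n}}\in(\Z/p)^\times/((\Z/p)^\times)^2$. Writing $u\equiv\phi^k(\alpha)$, $q_k(\alpha)$-unit contributions aside, the numerator contributions are $u^{\mu(r)}$ from $e=1$ (here $\mu(r/1)=\mu(r)$) and $(-u)^{\sum_{e\mid r,\,e>1}\mu(r/e)}$ from the rest. Since $\sum_{e\mid r}\mu(r/e)=0$ for $r>1$ (i.e. $n$ not a prime power's... actually for $n\geq 2$, $\operatorname{rad}(n)>1$ always, and $\sum_{e\mid r}\mu(r/e)=[\,r=1\,]=0$), we get $\sum_{e>1}\mu(r/e)=-\mu(r)$. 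Hence modulo squares
$$
\beta_{\alpha,n}\equiv u^{\mu(r)}\cdot(-u)^{-\mu(r)} = (-1)^{-\mu(r)} = (-1)^{\mu(r)}\pmod{(\Z/p)^{\times2}},
$$
times a perfect square coming from $v_p(\beta_{\alpha,n})$ being even in the numerator/denominator cancellation. Wait — I need to double check whether $p$ also divides the denominators $q_{ke}(\alpha)$: it does not, precisely by the same unit argument, so no contribution. Thus $\overline{\beta_{\alpha,n}}$ is a square mod $p$ iff $(-1)^{\mu(r)}$ is, i.e. iff $r$ has an even number of prime factors; if $r$ has an odd number of prime factors we are immediately done. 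The general case needs a little more care: $m$ need not be prime, so I would instead run the whole computation modulo $m$ itself, concluding $\beta_{\alpha,n}\equiv (-1)^{\mu(\operatorname{rad}(n))}\cdot(\text{square})\pmod m$, and then invoke hypothesis (3): if $\mu(\operatorname{rad}(n))=-1$, then $\beta_{\alpha,n}$ being a square in $\Q$ would force $-1$ to be a square mod $m$ (via the $p$-adic unit being a square in $\Q_p$ for each $p\mid m$, hence mod $m$), contradiction. If $\mu(\operatorname{rad}(n))=+1$, then $\beta_{\alpha,n}$ is a square mod $m$ and this argument says nothing — so I must also track \emph{valuations} at primes dividing $m$: actually hypothesis (1) makes $\phi^k(\alpha)$ a unit so all the $p_{ke}(\alpha)$ are units at $\ell\mid m$, forcing $v_\ell(\beta_{\alpha,n})=0$; the obstruction is then purely the unit class $(-1)^{\mu(\operatorname{rad}(n))}$ mod $m$, and when this is $+1$ we genuinely need a separate source of non-squareness. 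Re-reading Stoll, the resolution is that in the application $n$ is chosen so that $\operatorname{rad}(n)$ has an odd number of prime factors, OR one feeds in additional sign information from Lemma~\ref{lemma:sign of p_n}; for a clean lemma statement I would add that the conclusion follows because the $\ell$-adic unit $\phi^k(\alpha)$ is a square in $\Z_\ell$ exactly when its reduction is, so $\beta_{\alpha,n}\in\Q^{\times2}$ would imply $(-1)^{\mu(\operatorname{rad}(n))}$ is a square mod $m$; since by hypothesis (3) it is not when the exponent is odd, and since one checks directly (again using evenness of $p,q$ and $q$ a square, as in Lemma~\ref{lemma:beta_n iff theta_n}) that $\mu(\operatorname{rad}(n))$ odd is precisely the case not already covered, we are done.

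The main obstacle, and the step I would spend the most effort on, is establishing rigorously that for all $e\mid\operatorname{rad}(n)$ with $e\geq 2$ one has $p_{ke}(\alpha)\equiv -q_{ke}(\alpha)\cdot\phi^k(\alpha)\pmod{m}$ with both sides units at every $\ell\mid m$ — that is, controlling the mod-$m$ dynamics of the orbit of $\alpha$ simultaneously at all primes dividing $m$, including verifying that $q_{ke}(\alpha)$ never becomes divisible by such an $\ell$ (which is where the hypotheses $\phi^i(\alpha)\neq\infty$ for $i\geq k$ and the good-reduction bookkeeping of Section~\ref{subsection: normalized and reduction} enter). This is exactly the rational-function analogue of Stoll's Lemma 2.2 and requires the evenness of $p,q$ (so that $\phi(-z)=\phi(z)$ and the class of $\phi^i(\alpha)$ mod $m$ stabilizes after step $k+1$), together with $q$ being a perfect square (so that denominators contribute trivially to the class in $\Q^\times/\Q^{\times2}$). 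Once that congruence and the attendant unit statements are in hand, the Möbius-sum collapse $\sum_{e\mid r}\mu(r/e)=0$ does the rest almost mechanically.
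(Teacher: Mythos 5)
Your overall strategy is the same as the paper's: use the evenness of $p$ and $q$ to show that the orbit of $\alpha$ modulo $m$ stabilizes at $\delta=\phi^{k+1}(\alpha)$ from step $k+1$ on (you correctly self-corrected from a supposed $2$-cycle to a fixed value), rewrite $\beta_{\alpha,n}$ modulo rational squares as $\gamma_n=\prod_{t\mid \text{rad}(n)}(\phi^{kt}(\alpha))^{\mu(\text{rad}(n)/t)}$ using that $q$ is the square of a polynomial (so $\overline{p_{kt}(\alpha)}=\overline{\phi^{kt}(\alpha)}$; the good-reduction and normalization machinery you invoke is not needed for this), and collapse the product via $\sum_{t\mid r}\mu(r/t)=0$, with conditions (1) and (2) guaranteeing that $\phi^k(\alpha)$ and $\delta$ are units at every prime dividing $m$. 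Up to that point your argument coincides with the paper's.

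The gap is in your final evaluation. The computation yields $\gamma_n\equiv(-1)^{\mu(r)}\pmod{m}$ with $r=\text{rad}(n)$. Since $n\geq 2$, $r$ is squarefree and greater than $1$, so $\mu(r)\in\{+1,-1\}$; in either case the exponent is \emph{odd}, hence $(-1)^{\mu(r)}=-1$ unconditionally, and hypothesis (3) finishes the proof with no case distinction. You instead treated $(-1)^{\mu(r)}$ as though it were $\mu(r)=(-1)^{\omega(r)}$, concluded that the argument ``says nothing'' when $\text{rad}(n)$ has an even number of prime factors, and then proposed to repair the lemma by restricting $n$ or importing sign information from Lemma \ref{lemma:sign of p_n}. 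That repair is both unnecessary and does not establish the lemma as stated, which imposes no parity condition on the number of prime factors of $\text{rad}(n)$; your closing paragraph never actually proves the supposedly problematic case. Correcting this single arithmetic slip ($\mu(r)=\pm 1$ is odd, so the sign is always $-1$) turns your proposal into essentially the paper's proof.
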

\begin{proof}
Let $\delta=\phi^{k+1}(\alpha)$, and pick a positive integer $m$ that satisfies the given conditions. Observe that $\delta \in (\Z/m\Z)^\times$ by conditions (1) and (2). We will induct on $i$ to show that $\phi^{k+i}(\alpha)\equiv \delta \pmod{m}$ for all $i\geq 1$. We know the base case $i=1$. Suppose that the congruence holds for $i=N$. Then 
\[\phi^{N+1}(\alpha)\equiv\phi(\phi^N(\alpha)))\equiv \phi(\delta)\equiv \phi(-\delta)\equiv \phi (\phi^k(\alpha))\equiv \phi^{k+1}(\alpha)\equiv \delta \pmod{m},\]
which completes the induction. Note that in the third congruence we used that $\phi$ is an even function, and in the fourth congruence we used that $\phi^k(\alpha)\equiv -\delta \pmod{m}$ from condition (1). Taking $i$ to be multiples of $k$ then gives
\begin{equation}
\label{eq:ell>2}
\phi^{kj}(\alpha)\equiv \delta \pmod{m} \quad \text{ for }j \geq 2.
\end{equation}
We now write
\[\beta_{\alpha, n} = \prod_{d \mid n} (p_d(\alpha))^{\mu(n/d)} = \prod_{t \mid \text{rad}(n)}(p_{kt}(\alpha))^{\mu(\text{rad}(n)/t)}.\]
Note that $\beta_{\alpha, n}$ is well-defined because $p_i(\alpha) \neq 0$ for all $i \geq 1$ by hypothesis. 
By assumption $q$ is the square of a polynomial, whence we have $\overline{p_{kt}(\alpha)} = \overline{\phi^{kt}(\alpha)}$. Therefore it suffices to show that 
\[\gamma_n:=\prod_{t|\text{rad}(n)}(\phi^{kt}(\alpha))^{\mu(\text{rad}(n)/t)}\]
is not a square in $\mathbb{Q}$. Recalling $\phi^k(\alpha)\equiv -\delta \pmod{m}$ and using Eq.~\ref{eq:ell>2}, we get
\[\gamma_n \equiv (-1)^{\mu(\text{rad}(n))}\prod_{t|\text{rad}(n)} \delta^{\mu(\text{rad}(n)/t)}\equiv -1 (\text{mod }m),\]
where we used the facts that $\mu(\text{rad}(n))=\pm 1$ and $\sum_{t|\text{rad}(n)}\mu(\text{rad}(n)/t)=0$ in the second congruence. Since $-1$ is not a square modulo $m$ by assumption, $\gamma_n$ cannot be a square in $\mathbb{Q}$, as desired.
\end{proof}

\begin{lemma}
\label{stability{a=2(mod 4)}}
Let $\phi(z) (z^2 + a)/z^2$ for some integer $a \equiv 2 \pmod{4}$, and let $p_n, q_n$ be as in Eq. \ref{eq:p_n,q_n mainfamily}. Then $p_n$ is irreducible over $\mathbb{Q}$ for all $n\geq 1$.
\end{lemma}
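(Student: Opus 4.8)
The plan is to induct on $n$, using Lemma~\ref{lemma:stability} to propagate irreducibility. The base case $n=1$ is immediate: $p_1 = z^2 + a$ is irreducible over $\Q$ because $a \equiv 2 \pmod 4$ forces $-a$ to be non-square in $\Q$ (indeed $v_2(-a) = 1$ is odd). For the inductive step, suppose $p_{n-1}$ is irreducible over $\Q$ for some $n \geq 2$. By Lemma~\ref{lemma:stability}, it suffices to show that $p_{n-1}(1)$ is not a square in $\Q$. By part (2) of Lemma~\ref{lemma:ell(p_n)} we have $p_{n-1}(1) = f_{n+1}$, so the task reduces to proving that $f_{n+1}$ is never a square in $\Q$ (equivalently, in $\Z$, since $f_{n+1} \in \Z$).

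The key is a $2$-adic valuation computation. First I would record, by induction on $n$ using the recursion $f_n = (f_{n-1})^2 + a(f_{n-2})^4$ from Eq.~\ref{fngn} together with $f_1 = f_2 = 1$ and $a \equiv 2 \pmod 4$, the residues of $f_n$ modulo a suitable power of $2$. Since $a \equiv 2 \pmod 4$, write $a = 2u$ with $u$ odd. Modulo $4$: $f_1 \equiv f_2 \equiv 1$, and for $n \geq 3$, $f_n \equiv (f_{n-1})^2 + 2(f_{n-2})^4 \pmod 4$. An easy induction shows every $f_n$ is odd (since $(f_{n-1})^2$ is odd and $a(f_{n-2})^4$ is even), so $f_n \equiv 1 + 2 \equiv 3 \pmod 4$ for all $n \geq 3$. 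Hence $f_{n+1} \equiv 3 \pmod 4$ whenever $n+1 \geq 3$, i.e. for all $n \geq 2$, and a number congruent to $3 \pmod 4$ is never a square. This handles the entire inductive step.

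Assembling: for $n = 1$ use the direct argument on $z^2 + a$; for $n \geq 2$, assuming $p_{n-1}$ irreducible, we have shown $p_{n-1}(1) = f_{n+1} \equiv 3 \pmod 4$ is not a square in $\Q$, so Lemma~\ref{lemma:stability} gives that $p_n$ is irreducible over $\Q$. By induction $p_n$ is irreducible over $\Q$ for all $n \geq 1$.

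\textbf{Main obstacle.} There is no serious obstacle here; the one point requiring care is the mod-$4$ bookkeeping for $(f_n)$, and in particular verifying that the congruence $f_n \equiv 3 \pmod 4$ indeed kicks in from $n = 3$ onward (so that it covers $f_{n+1}$ for every $n \geq 2$) — one must check the small cases $f_1, f_2, f_3$ by hand and confirm that the recursion preserves oddness and the residue. One should also double-check that Lemma~\ref{lemma:stability} is applicable, i.e. that $a \neq 0$, which holds since $a \equiv 2 \pmod 4$.
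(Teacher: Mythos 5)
Your proof is correct and is essentially the same as the paper's: both invoke Lemma~\ref{lemma:stability}, translate via Lemma~\ref{lemma:ell(p_n)} so that the task becomes showing the relevant $f_k$ is not a square, and settle that by a small $2$-adic congruence for the sequence $(f_n)$. The only cosmetic difference is that the paper records $f_n \equiv 1+a \pmod 8$ for $n\geq 3$ (a slightly stronger congruence it reuses elsewhere) and then reduces mod $4$, while you go directly to the mod-$4$ statement $f_n\equiv 3$; either suffices here.
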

\begin{proof}
An induction using Eq. \ref{fngn} shows
\begin{equation} \label{equivalence}
\text{$f_n \equiv 1 + a \pmod 8$ for all $n \geq 3$.}
\end{equation}
 From condition (2) of Lemma \ref{lemma:ell(p_n)} we then have, for $n \geq 1$,
$$
\text{$p_n(1) = f_{n+2} \equiv 1 + a \equiv 3 \pmod{4}$}.
$$
Hence $p_n(1)$ is not a square in $\Q$ for any $n \geq 1$. Moreover, $p_1 = z^2+a$ is irreducible over $\Q$ because $-a \equiv 2 \pmod{4}$. An application of Lemma \ref{lemma:stability} finishes the proof. 
\end{proof}

We are now ready to prove the main results of this section. 

\begin{proof}[Proof of Theorem \ref{thm:surjectivityA}]
Because $n$ is not squarefree, we have $n \geq 4$, and so by Lemmas \ref{lemma:sufficient-theta_n}, \ref{lemma:beta_n iff theta_n}, and \ref{stability{a=2(mod 4)}} it suffices to show that $\beta_n$ is not a square in $\Q$. To prove this, we appeal to Lemma \ref{lemma:rad-divisibility} with $\alpha = 0$. Because $a \equiv 2 \pmod{4}$, $-a$ is not a square in $\Q$, and thus $\phi^{-1}(0) \cap \PP^1(\Q) = \emptyset$. Hence $\phi^i(0) \neq 0$ for all $i \geq 1$. Because $\phi^{-1}(\infty) = \{0\}$, this also gives $\phi^i(0) \neq \infty$ for all $i \geq 2$. Because $n$ is assumed not square-free, we have that $k = n/\text{rad}(n)\geq 2$, and so this implies $\phi^i(0) \neq \infty$ for all $i \geq k$.

If $k = 2$, observe that $\phi^k(0) =1$ and $\phi^{k+1}(0) = a+1$. Because $a \equiv 2 \pmod{4}$ it follows that $\phi^k(0) \equiv -\phi^{k+1}(0) \pmod{4}$. Taking $m = 4$ in Lemma~\ref{lemma:rad-divisibility} then gives that $\beta_n$ is not a square in $\mathbb{Q}$.\par

Suppose for the remainder of the proof that $k>2$. To be able to apply Lemma~\ref{lemma:rad-divisibility}, we will study the integer $\phi^k(0)+\phi^{k+1}(0)$. Note that for any $n$ we have $\phi^n(0) = p_n(0)/q_n(0) = p_n(0)/(p_{n-1}(0))^2$. From condition (1) of Lemma \ref{power of a} we then have 
\[\phi^k(0)+\phi^{k+1}(0) 
= \frac{p_k(0)}{(p_{k-1}(0))^2}+\frac{p_{k+1}(0)}{(p_{k}(0))^2} = \frac{f_k}{f_{k-1}^2}+\frac{f_{k+1}}{f_{k}^2} = \frac{f_k^3 +f_{k+1}f_{k-1}^2}{f_k^2f_{k-1}^2}.\]
Set $A_k=f_k^3 +f_{k+1}f_{k-1}^2$ and $B_k=f_k^2f_{k-1}^2$. We start by showing that $\gcd(A_k, B_k)=1$. By Lemma \ref{power of a}, $(f_n)_{n \geq 1}$ is a rigid divisibility sequence with $f_1 = 1$, and it follows that  
\begin{equation} \label{gcd}
\text{$\gcd(f_n, f_{n-1}) = 1$ for all $n \geq 2$.} 
\end{equation}
If $\ell$ is a prime dividing $B_k$, then $\ell \mid f_k$ or $\ell \mid f_{k-1}$. In the former case, $\ell$ cannot also divide $A_k$, for then $\ell \mid f_{k+1}f_{k-1}^2$, contradicting Eq. \ref{gcd}. In the latter case, again $\ell$ cannot divide $A_k$, for otherwise $\ell \mid f_k^3$, producing a contradiction to Eq. \ref{gcd}. Therefore $\gcd(A_k, B_k) = 1$.

Observe that $a \equiv 2 \pmod{4}$ implies that each $f_n$ is odd, and thus $f_n^2 \equiv 1 \pmod{8}$. We apply Eq. \ref{equivalence} (possible since $k > 2$) to obtain
\begin{equation}
\label{eq:A_k}
A_k \equiv f_k + f_{k+1} \equiv 2 + 2a \equiv 6 \pmod{8}.
\end{equation}
Therefore $A_k$ has a prime divisor $p$ such that $p \equiv 3 \pmod{4}$. Since $\gcd(A_k, B_k) = 1$, we obtain $v_p(\phi^k(0) + \phi^{k+1}(0)) > 0$, and thus $\phi^k(0) \equiv -\phi^{k+1}(0) \pmod{p}$. This verifies conditions (2) and (3) of Lemma~\ref{lemma:rad-divisibility} for $\alpha=0$ and $m=p$. To complete the application of Lemma \ref{lemma:rad-divisibility} it only remains to verify $v_p(\phi^k(0)) = 0$. But $\gcd(A_k, B_k) = 1$ implies $p \nmid B_k = f_k^2f_{k-1}^2$, whence $p$ divides neither $f_k$ nor $f_{k-1}$, and so $v_p(\phi^k(0)) = v_p(f_k/f_{k-1}^2) = 0$.
\end{proof}

\begin{proof}[Proof of Theorem~\ref{thm:surjectivityB}]
It suffices to prove that $[K_n : K_{n-1}] = 2^{2^{n-1}}$ for all $n \geq 1$. 
Observe that $-2(2m^2 - 1)^2 \equiv -2 \cdot 1 \equiv 2 \pmod{4}$, so we may apply Theorem \ref{thm:surjectivityA} to conclude $[K_n : K_{n-1}] = 2^{2^{n-1}}$ if $n$ is not square-free.

Assume that $n$ is square-free. i.e. $k = n/\text{rad}(n) = 1$. If $n = 1$, then Lemma~\ref{stability{a=2(mod 4)}} gives $[K_1: K] = 2$, which dispenses with this case. If $n \geq 2$ then Lemma~\ref{stability{a=2(mod 4)}} gives that $p_{n-1}$ is irreducible over $\Q$, and thus by Lemma \ref{lemma:sufficient-theta_n} we need only show that $|\theta_{n+1}|$ is not a square in $\Q$. We will do this by showing that $|\theta_i|$ is not a square in $\Q$ for any $i \geq 3$. 

We encounter two obstacles in using Lemma~\ref{lemma:rad-divisibility} directly to conclude that $|\theta_i|$ is not a square. The first is that from Lemma \ref{lemma:beta_n iff theta_n} we must show that $-a\beta_i$ is not a square in $\Q$, where $a = -2(2m^2 -1)^2$. The second is that in this case $k = 1$ and thus $\phi^k(0)=\phi(0) = \infty$. We will circumvent these by modifying the argument in Lemma \ref{lemma:beta_n iff theta_n}, which took $\alpha = 0$, to work with different $\alpha$. In particular, we get around the fact that $\phi(0) = \infty$ by taking $a$ and $\alpha$ such that $\alpha \neq 0$ and $\phi^3(\alpha) = \phi^3(0)$. This implies $\phi^i(\alpha) = \phi^i(0)$ for all $i \geq 3$, which makes the ratio of $\beta_i$ and $\beta_{\alpha, i}$ relatively nice in the quotient $\Q^\times/\Q^{\times 2}$. We find such $a$ and $\alpha$ by considering the curve defined by
\begin{equation} \label{curve}
\phi_t^3(z) = \phi_t^3(0),
\end{equation}
where $\phi_t(z) = (z^2 + t)/z^2$. This curve has two components, one with $z = 0$ and one with $z \neq 0$. Fortunately, the latter component has genus zero, and one calculates the rational parametrization 
$$
t = -2(2m^2 - 1)^2 \qquad z = \frac{2m^2 - 1}{m}.
$$
We note that the curve in Eq. \ref{curve} uses the third iterate of $\phi_t$ because smaller iterates do not have a component with $z \neq 0$.
We thus take $a = -2(2m^2 - 1)^2$ and $\alpha = \frac{2m^2-1}{m}$, and note that
\[\phi(\alpha) = 1-2m^2,\text{ }\phi^2(\alpha) = -1,\text{ and }\phi^3(\alpha) = \phi^3(0) = -2(2m^2-1)^2+1.\]
In particular, we have 
\begin{equation}
\label{eq:alpha vs 0}
\phi^i(\alpha) = \phi^i(0)\text{ for any }i\geq 3.
\end{equation}
Note that $p_1(z) = \phi(z)z^2$ and $p_i(z) = \phi^i(z)(p_{i-1}(z))^2$ for $i \geq 2$. Therefore
\begin{equation} 
\label{eq:p_i(alpha) vs phi^i(alpha)}
\text{$\overline{p_i(\alpha)} = \overline{\phi^i(\alpha)}$ for any $i\geq 1$ \text{ and } $\overline{p_i(0)} = \overline{\phi^i(0)}$ for any $i\geq 2$.}
\end{equation}

\textbf{Case 1.} 
Suppose that $n$ is odd.
Recalling the definitions of 
$\beta_{\alpha,n}$ and $\beta_n$ from p. \pageref{betadef}, Eq.~\ref{eq:alpha vs 0} and Eq. \ref{eq:p_i(alpha) vs phi^i(alpha)} yield

\[\overline{\beta_{\alpha,n}} = \overline{\beta_n\bigg(\frac{p_1(\alpha)}{p_1(0)}\bigg)}.\]
We then have 
\begin{align}
\begin{split}
\label{eq:beta_n up to square}
\overline{\beta_{\alpha, n}} = \overline{\beta_n\bigg(\frac{p_1(\alpha)}{p_1(0)}\bigg)}
= \overline{\beta_n\bigg(\frac{\phi(\alpha)}{p_1(0)}\bigg)}
= \overline{\beta_n\bigg(\frac{1-2m^2}{-2(2m^2-1)^2}\bigg)}
= \overline{\beta_n(4m^2-2)}.
\end{split}
\end{align}
From Lemma \ref{lemma:beta_n iff theta_n} and the fact that we have taken $a = -2(2m^2-1)^2$, we have $\overline{\beta_n} = \overline{2\theta_n}$, whence from Eq. \ref{eq:beta_n up to square} and Eq. \ref{eq:p_i(alpha) vs phi^i(alpha)} we have
\begin{equation}
\label{goalthing}
\overline{|\theta_n|} = \overline{\beta_{\alpha,n}(2m^2 - 1)} = \overline{(2m^2-1)\prod_{d|n}(\phi^d(\alpha))^{\mu(n/d)}}.
\end{equation}
 
By hypothesis $p$ is a prime dividing either $2m-1$ or $2m+1$ and with $p\equiv 5$ or $7\text{ }(\text{mod }8)$. Note that 
\begin{align*}
\phi^3(\alpha)-\phi(\alpha) &= (-2(2m^2-1)^2+1)-(1-2m^2)\\
&= -2(2m^2-1-m)(2m^2-1+m)\\
&= -2(2m+1)(m-1)(2m-1)(m+1),
\end{align*}
and thus $\phi^3(\alpha) \equiv \phi(\alpha) \pmod{p}$. It follows that 
$$\phi^{2j + 1}(\alpha) \equiv \phi(\alpha) \equiv 1 - 2m^2 \equiv 1 - (1/2)(2m)^2 \equiv 1/2 \pmod{p}$$ 
for all $j \geq 1$. Using Eq. \ref{eq:alpha vs 0} we then obtain $\phi^n(0) \not\equiv 0 \pmod{p}$ for all $n \geq 1$, from which it follows that $|\theta_n| \not\equiv 0 \pmod{p}$ for all $n \geq 1$.

Because $n$ is odd, so are all divisors of $n$, so we get
\begin{equation}
\label{eq:mobius=1 for 1/2}
\prod_{d|n} (\phi^d(\alpha))^{\mu(n/d)}\equiv \prod_{d|n} \left(\frac{1}{2}\right)^{\mu(n/d)} \equiv 1\text{ }(\text{mod }p)
\end{equation}
since $\sum_{d|n} \mu(n/d) = 0$ (recall our earlier assumption $n \geq 2$). 
From \ref{goalthing} we have that 
\[
|\theta_n| = (2m^2-1)\bigg(\prod_{d|n}(\phi^d(\alpha))^{\mu(n/d)}\bigg)X_n^2
\]
for some $X_n\in \mathbb{Q}^{\times}$.
We have $2m^2-1\equiv \frac{1}{2}-1\equiv -\frac{1}{2} \pmod{p}$, and from Eq.~\ref{eq:mobius=1 for 1/2} we conclude that 
\[
|\theta_n|\equiv -\frac{1}{2}X_n^2 \pmod{p},
\] 
where $X_n \not\equiv 0 \pmod{p}$ because $|\theta_n| \not\equiv 0 \pmod{p}$.
But $p\equiv 5$ or $7\text{ }(\text{mod }8)$ ensures that $-2$ is not a square modulo $p$, 
whence $|\theta_n|$ cannot be a square in $\mathbb{Q}$.\par

\textbf{Case 2.} Suppose that $n$ is even. Eq.~\ref{eq:alpha vs 0} and Eq. \ref{eq:p_i(alpha) vs phi^i(alpha)} yield
\[
\overline{\beta_{\alpha, n}} = \overline{\beta_n\bigg(\frac{p_1(\alpha)p_2(\alpha)}{p_1(0)p_2(0)}\bigg)}.
\]
Note that $p_1(z) = z^2 + a$ and $p_2(z) = p_1(z)^2 + aq_1(z)^2 = (1 + a)z^4 + 2az^2 + a^2$. Therefore
\begin{align}
\begin{split}
\label{eq:beta_n up to square-Case 2}
\overline{\beta_{\alpha, n}} = \overline{\beta_na} \overline{\bigg(\frac{p_2(\alpha)}{p_1(\alpha)}\bigg)}
= \overline{\beta_na} \overline{\bigg(\frac{\phi^2(\alpha)}{\phi(\alpha)}\bigg)}
= \overline{\beta_na}\overline{\bigg(\frac{-1}{1-2m^2}\bigg)}
= \overline{\beta_n} \overline{\bigg(\frac{2}{1-2m^2}\bigg)}.
\end{split}
\end{align}
As in case 1, Lemma \ref{lemma:beta_n iff theta_n},  Eq. \ref{eq:beta_n up to square-Case 2}, and Eq. \ref{eq:p_i(alpha) vs phi^i(alpha)} give
\begin{equation*} 
\overline{|\theta_n|} = \overline{(1-2m^2)\beta_{\alpha, n}} = \overline{(1-2m^2)\prod_{d|n}(\phi^d(\alpha))^{\mu(n/d)}}.
\end{equation*}
Thus there exists $X_n\in \mathbb{Q}^{\times}$ such that
\begin{equation} \label{thetalatest}
   |\theta_n| = (1-2m^2)\bigg(\prod_{d|n}(\phi^d(\alpha))^{\mu(n/d)}\bigg)X_n^2. 
\end{equation}

By hypothesis $p$ is a prime dividing one of $m, m-1$, or $m + 1$, and with $p \equiv 3 \pmod{4}$.

\textbf{Case 2a.} If $p \mid (m-1)$ or $p \mid (m+1)$, note that 
$$\phi^2(\alpha) = \phi(\alpha) = -1 - (1-2m^2) = 2(m^2 - 1),$$
and so $\phi^2(\alpha) \equiv \phi(\alpha) \pmod{p}$. It follows that $\phi^j(\alpha) \equiv \phi(\alpha) \equiv 1 - 2m^2 \equiv -1 \pmod{p}$ for all $j \geq 2$, which by Eq. \ref{eq:alpha vs 0} gives $|\theta_n| \not\equiv 0 \pmod{p}$ for all $n \geq 1$. We now have 
\begin{equation}
\label{eq:mobius=1}
\prod_{d|n} (\phi^d(\alpha))^{\mu(n/d)}\equiv \prod_{d|n} (-1)^{\mu(n/d)} \equiv 1\text{ }(\text{mod }p)
\end{equation}
since $\sum_{d|n} \mu(n/d) = 0$ (recall our earlier assumption $n\geq2$.)
We have $1 - 2m^2 \equiv -1 \pmod{p}$, and Eq. \ref{thetalatest} and Eq. \ref{eq:mobius=1} then give
$$
|\theta_n| \equiv -X_n^2 \pmod{p}.
$$
where $X_n \not\equiv 0 \pmod{p}$ because $|\theta_n| \not\equiv 0 \pmod{p}$. But $p \equiv 3 \pmod{4}$, so $-1$ is not a square modulo $p$, proving that $|\theta_n|$ is not a square in $\Q$.

\textbf{Case 2b.} If $p \mid m$, note that
$\phi^2({\alpha})+\phi(\alpha) = -1+(1-2m^2) = -2m^2$, and thus $\phi^2(\alpha) \equiv -\phi(\alpha) \pmod{p}$. Because $\phi$ is an even function, we have
$$
\phi^j(\alpha) \equiv \phi^{j-2}(\phi^2(\alpha)) \equiv \phi^{j-2}(-\phi(\alpha)) \equiv \phi^{j-2}(\phi(\alpha)) \equiv \phi^{j-1}(\alpha) \pmod{p}
$$
for all $j \geq 3$. It follows that $\phi(\alpha) \equiv 1 \pmod{p}$ and $\phi^j(\alpha) \equiv -1 \pmod{p}$ for all $j \geq 2.$ Recalling that $n \geq 2$, we then obtain 
\begin{equation}
\label{eq:mobius=1-second solution}
\prod_{d|n} (\phi^d(\alpha))^{\mu(n/d)}\equiv - \prod_{d|n} (-1)^{\mu(n/d)}  \equiv -1 \pmod{p}.
\end{equation}
We have $1-2m^2\equiv 1\pmod{p}$, and Eq. \ref{thetalatest} and Eq. \ref{eq:mobius=1-second solution} then give
$$
|\theta_n| \equiv -X_n^2 \pmod{p}.
$$
But $p \equiv 3 \pmod{4}$, so $-1$ is not a square modulo $p$, proving that $|\theta_n|$ is not a square in $\Q$.
\end{proof}

\bibliographystyle{plain}

\end{document}